\newcommand{\OO}{\mathscr{O}}
\newcommand{\Cfield}{\mathbb{C}}
\newcommand{\image}{\textnormal{im}\,}
\newcommand{\kernel}{\textnormal{ker}\,}
\newcommand{\Hom}{\textnormal{Hom}}
\newcommand{\dimension}{\textnormal{dim}\,}
\newcommand{\rank}{\textnormal{rk}\,}
\newcommand{\supp}{\textnormal{supp}}
\newcommand{\Ext}{\textnormal{Ext}}
\newcommand{\EExt}{\mathscr{E}xt}
\newcommand{\al}{\alpha}
\newcommand{\Coh}{\mathrm{Coh}}
\newcommand{\arinj}{\ar@{^{(}->}}
\newcommand{\arsurj}{\ar@{->>}}
\newcommand{\areq}{\ar@{=}}
\newcommand{\wh}{\widehat}
\newtheorem{theorem}{Theorem}[section]
\newtheorem{lemma}[theorem]{Lemma}
\newtheorem{proposition}[theorem]{Proposition}
\newtheorem{coro}[theorem]{Corollary}
\newtheorem*{theorem4-13}{Theorem 4.12}
\newtheorem*{theorem4-26}{Theorem 4.26}
\theoremstyle{definition}
\newtheorem{definition}[theorem]{Definition}
\newtheorem{example}[theorem]{Example}
\theoremstyle{remark}
\newtheorem{remark}[theorem]{Remark}
\numberwithin{equation}{section}
\begin{document}

\title{t-structures on elliptic fibrations}

\author[Jason Lo]{Jason Lo}
\address{Department of Mathematics \\
University of Illinois at Urbana-Champaign\\
1409 W Green St\\
Urbana IL 61801 \\
USA}
\email{jccl@illinois.edu}

\keywords{t-structure, torsion pair, elliptic fibration, cohomology, Fourier-Mukai transform}
\subjclass[2010]{Primary 14D06; Secondary: 14F05, 14J27, 14J60}
%\subjclass[2010]{Primary --; Secondary: --, --}

\begin{abstract}
We consider t-structures that naturally arise on elliptic fibrations.  By filtering the category of coherent sheaves on an elliptic fibration using the torsion pairs corresponding to these t-structures, we prove results describing equivalences of t-structures under Fourier-Mukai transforms.
\end{abstract}

\maketitle
\tableofcontents

\section{Introduction}

This is the first in a series of articles on elliptic fibrations.  In this article, we focus on t-structures that can be constructed using the geometry of an elliptic fibration, and using a relative Fourier-Mukai transform from the derived category of coherent sheaves on the elliptic fibration. In later articles, we will study the relations between these t-structures and those that appear in the study of Bridgeland stability conditions.  We will also study the different notions of stabilities associated with them.

The study of t-structures comes into various problems of active interest in algebraic geometry: for instance, explicitly in the study of Bridgeland stability conditions, and implicitly in the construction of stable sheaves.

In the study of Bridgeland stability conditions (see \cite{StabTC,SCK3,ABL,MacMea,Macri,Schmidt,MacPir1,MacPir2,BMS}), t-structures are part of the definition for a stability condition.  A Bridgeland stability condition on a smooth projective variety $X$ is a pair $(Z,\mathcal A)$ where $\mathcal A$ is the heart of a t-structure on $D(X):=D^b(\Coh (X))$, the bounded derived category of coherent sheaves on $X$, and $Z$ a group homomorphism from the Grothendieck group $K(X)$ to $\Cfield$, with $\mathcal A, Z$ satisfying certain properties.  Therefore, undertanding t-structures on $D(X)$ has implications on the types of stability conditions that can arise, as well as on which objects in $D(X)$ can arise as stable objects.

In the construction of stable sheaves on varieties, t-structures come into play in the method of spectral construction (see \cite{FMW} and, for instance, \cite{FMNT,CDFMR,HRP,ARG}), albeit implicitly.  For instance, when $X$ admits the structure of an elliptic fibration $\pi : X \to S$, if we have a `dual' fibration $\wh{\pi} : Y \to S$ where $Y$ is another elliptic fibration with a Fourier-Mukai transform $\Phi : D(Y) \to D(X)$, then the spectral construction produces stable sheaves on $X$ of the form $\Phi (F)$ where $F$ is a coherent sheaf on $Y$ supported in codimension 1.  In our notation in Section \ref{section-induced} below, we can view the sheaf $\Phi (F)$ as lying in the category $\Phi (\{\Coh^{\leq 0}(Y_s)\}^\uparrow)$, where $\{ \Coh^{\leq 0}(Y_s)\}^\uparrow$ is contained in the heart of a t-structure that is a tilt of the standard t-structure on $D(Y)$.

In this paper, we consider various t-structures on $D(X)$ where $X$ is a smooth projective variety that comes with an elliptic fibration $\pi : X \to S$ and a Fourier-Mukai partner $\wh{\pi} : Y \to S$ (see Section \ref{beginning} for the precise assumptions).  These t-structures arise from the geometry of $X$ itself, the geometry of the fibration $\pi$, and also from the Fourier-Mukai transform between $D(X)$ and $D(Y)$.  Each of these t-structures  corresponds to a torsion pair in $\Coh (X)$ or $\Coh (Y)$, which in turn is determined by the torsion class in the torsion pair.

When $X,Y$ are elliptic threefolds, the torsion classes in $\Coh (X)$ from which we build all the other torsion classes  in this article (by taking intersections and extensions) can be summarised in the following diagram, where each arrow denotes an inclusion of categories:
\[
\xymatrix{
 & & & \mathfrak T_X \\
 W_{0,X} \ar[rrr] & & & W_{0,X}' \ar[u] \\
 \{\Coh^{\leq 0}(X_s)\}^\uparrow \ar[u] \ar[rrr] & & & \Coh^{\leq 2}(X) \ar[u] \\
 & & \Coh^{\leq 1}(X) \ar[ur] \ar[r] & \Coh (\pi)_{\leq 1} \ar[u] \\
 & & & \\
 \Coh^{\leq 0}(X) \ar[uuu] \ar[r] \ar[uurr] & \mathcal B_{X,\ast} \ar[r] & \Coh(\pi)_{\leq 0} \ar[uu] &
}.
\]
Here,
\begin{itemize}
\item $\Coh^{\leq d}(X)$ is the category of coherent sheaves $E$ on $X$ supported in dimension at most $d$;
\item $\Coh (\pi)_{\leq d}$ is the category of coherent sheaves $E$ on $X$ such that the dimension of $\pi (\text{supp}(E))$ is at most $d$;
\item $\{\Coh^{\leq 0}(X)\}^\uparrow$ is the category of coherent sheaves $E$ on $X$ such that the restriction $E|_s$ to the fiber over any closed point $s \in S$ is supported in dimension 0;
\item $W_{0,X}$ is the category of coherent sheaves $E$ on $X$ such that $\Psi (E)$ is isomorphic to a coherent sheaf on $Y$ sitting at degree 0 (i.e.\ the category of the `$\Psi$-WIT$_0$ sheaves'), with $\Psi$ being the Fourier-Mukai trasform $D(X) \to D(Y)$ coming from the construction of $Y$;
\item $W_{0,X}'$ is the category of coherent sheaves $E$ on $X$ such that, for a general closed point $s \in S$, the restriction $E|_s$ to the fiber over $s$ is taken by $\Psi_s$ (the base change of $\Psi$ under the closed immersion $\{s\}\hookrightarrow S$) to a sheaf sitting at degree 0;
\item $\mathfrak{T}_X$ is the extension closure of $W_{0,X}'$ and $\Phi (\{\Coh^{\leq 0}(Y_s)\}^\uparrow)$.
\item $\mathcal B_{X,\ast}$ is the extension closure of $\Coh^{\leq 0}(X)$ and a category of fiber sheaves - see \eqref{eq64}.
\end{itemize}

Using these subcategories, we filter the category of coherent sheaves on $X$ or $Y$ into smaller pieces with distinct geometric properties, the behaviors of which under the Fourier-Mukai transform $\Psi : D(X) \to D(Y)$ are tractable.  As a consequence, we obtain results that explicitly describe how  t-structures on $D(X)$ and $D(Y)$ are equivalent, or related by tilts, under a relative  Fourier-Mukai transform $\Psi : D(X) \to D(Y)$.

Another underlying motivation for studying the t-structures that appear in this paper is, to some extent, to mitigate the problem that stability is not always well-behaved under base change.  Given a flat morphism $\pi : X \to S$ of smooth projective varieties and a torsion-free coherent sheaf $E$ on $X$, there are various results on how the  stability (in the sense of slope stability or Gieseker stability) of $E$ is related to the  stability  of the restriction of $E$ to the generic fiber of $\pi$ - see \cite[Proposition 7.1]{FMTes}, \cite[Lemma 2.1]{BMef}, and \cite[Proposition 3.3]{ARG}.  In general, however, there seems to be few results relating  the stability of $E$ and the stability of its restriction to a special fiber.

To this end, we propose to replace the notion of stability for coherent sheaves on $X$ by the notion of being $\Psi$-WIT$_0$.   As observed in Lemma \ref{lemma20}, being WIT$_0$ is compatible with base change when $\pi$ is of relative dimension 1, and so, compared to slope stability, it gives us a better behaved notion under the Fourer-Mukai transform $\Psi$.  More precisely, when $\pi$ is an elliptic fibration and $F$ is a sheaf supported on a fiber of $\pi$, that $F$ is WIT$_0$ is equivalent to all its Harder-Narasimhan (HN) factors having strictly positive slopes  \cite[Corollary 3.29, Proposition 6.38]{FMNT}.  Therefore, when a sheaf $E$ on $X$ is $\Psi$-WIT$_0$, we can restrict it to a special fiber and still understand the HN filtration after restriction.  As a result, even though special cases of the torsion classes $\Coh^{\leq d}(X)$ and $\Coh (\pi)_{\leq d}$ have appeared in the author's earlier article \cite{Lo5} (as the torsion classes $\mathcal T_X$ and $\mathcal B_X$), by taking the above perspective in this article, we obtain finer results on the behavior of these torsion classes under Fourier-Mukai transforms.

\subsection{Main Results}

Let us write $\mathfrak{C}_X$ to denote the heart of the t-structure on $D(X)$ given by
\begin{align*}
  D^{\leq 0}_{\mathfrak C} &:= \{ E \in D(X) : H^0(E) \in \mathfrak T_X, H^i(E) =0 \, \forall \, i >0 \}, \\
  D^{\geq 0}_{\mathfrak C} &:= \{ E \in D(X) : H^{-1}(E) \in \mathfrak T_X^\circ, H^i(E)=0 \, \forall\, i < -1\},
\end{align*}
where $\mathfrak T_X^\circ$ is the torsion-free class in $\Coh (X)$ corresponding to the torsion class $\mathfrak T_X$, and write $\mathfrak D_Y$ to denote the heart of the t-structure on $D(Y)$ given by
\begin{align*}
  D^{\leq 0}_{\mathfrak D} &:= \{ E \in D(Y) : H^0(E) \in \mathcal B_{Y,\ast}, H^i(E) =0 \, \forall \, i>0 \}, \\
  D^{\geq 0}_{\mathfrak D} &:= \{ E \in D(Y) : H^{-1}(E) \in \mathcal B_{Y,\ast}^\circ, H^i(E)=0\, \forall \, i<-1\},
\end{align*}
where $\mathcal B_{Y,\ast}^\circ$ is the torsion-free class in $\Coh (Y)$ corresponding to the torsion class $\mathcal B_{Y,\ast}$.  That is,
\begin{equation*}
\mathfrak{C}_X = \langle \mathfrak T_X^\circ[1], \mathfrak T_X \rangle \text{\quad and \quad} \mathfrak D_Y = \langle \mathcal B_{Y,\ast}^\circ [1], \mathcal B_{Y,\ast}\rangle.
\end{equation*}
Let us also write  $\Lambda$ to denote the composition of the Fourier-Mukai functor $\Psi (-)$ with the derived dual functor $-^\vee$, i.e.\ $\Lambda (-) = (\Psi (-))^\vee$.  Then we have:

\begin{theorem4-13}
When $X$ is a smooth elliptic surface, the functor $\Lambda$ induces an equivalence between the t-structure $(D^{\leq 0}_{\mathfrak C}, D^{\geq 0}_{\mathfrak C})$ on $D(X)$, and the t-structure $(D^{\leq 0}_{\mathfrak D}, D^{\geq 0}_{\mathfrak D})$ on $D(Y)$.  Equivalently, $\Lambda$ induces an equivalence of hearts
\begin{equation}
   \mathfrak C_X \overset{\thicksim}{\to} \mathfrak D_Y [-1].
\end{equation}
\end{theorem4-13}

Theorem \ref{theorem1} for elliptic surfaces can be considered as a special case of a  result of Yoshioka's \cite[Proposition 3.3.5]{YosPII} - see the end of Section \ref{sec-ellipsurf}, including Lemma \ref{lemma76}, for a precise explanation of this.  Yoshioka's result is more general in the case of elliptic surfaces, as it is stated for tilts of categories of perverse sheaves, as opposed to tilts of categories of coherent sheaves.  The result  \cite[Proposition 3.3.5]{YosPII} was obtained by Yoshioka as part of an argument towards showing an isomorphism between moduli spaces of twisted stable sheaves on elliptic fibrations that are Fourier-Mukai partners (see \cite[Proposition 3.4.4]{YosPII} or \cite[Theorem 3.15]{YosAS}).

When $X$ and $Y$ are elliptic threefolds, the functor $\Lambda$ no longer induces an equivalence between the t-structures $(D^{\leq 0}_{\mathfrak C}, D^{\geq 0}_{\mathfrak C})$ and $(D^{\leq 0}_{\mathfrak D}, D^{\geq 0}_{\mathfrak D})$.  Instead,  the hearts of these two t-structures differ by a tilt (up to a shift):

\begin{theorem4-26}
Suppose $X$ is a smooth elliptic threefold.  Then the heart $\Lambda (\mathfrak C_X)$ differs from  the heart $\mathfrak D_Y[-2]$  by one tilt.
\end{theorem4-26}

We prove Theorem \ref{theorem2} by explicitly studying the effects of $\Lambda$ on various subcategories of $\Coh (X)$, which arise from a nested sequence of Serre subcategories of $\Coh (X)$ - see Theorem \ref{theorem3} and its proof.

The implications of our results on moduli spaces will be addressed in a later paper.  The relationships between the t-structures considered in this paper, as well as other t-structures that come up in the study of Bridgeland stability conditions, will also be explored in a later article.

Some of the t-structures considered in this article were already considered in \cite{Lo9} and \cite{Lo10}, which grew out of an attempt to understand the results in \cite{FMTes} and \cite{BMef}.

\noindent
\textbf{Acknowledgements.} The author would like to thank the National Center for Theoretical Sciences in Taipei, as well as the Max Planck Institute for Mathematics in Bonn, for their hospitality during the author's stay from March through May 2014, when most of this project took place.  He would  like to thank Ziyu Zhang for many invaluable comments on various versions of this article, and Tom Nevins and Sheldon Katz for comments on a later version.  He would also like to thank the referee for a careful reading of the manuscript, and many helpful comments leading to improvement on the exposition in this article.

\section{Notation}\label{beginning}

The setup considered in the rest of this article (except for Section \ref{sec-bcf}, where the setup is slightly more general) is as follows: we will assume that we have a pair of morphisms of smooth projective varieties $\pi : X \to S$ and $\hat{\pi} : Y \to S$ that satisfies the following conditions:
\begin{itemize}
\item[(i)] There is a pair of relative integral functors that are quasi-inverse to each other, up to a shift (so that they are necessarily equivalences):
\begin{equation*}
  \Psi : D^b(X) \overset{\thicksim}{\to} D^b(Y) \text{\qquad and \qquad} \Phi : D^b(Y) \overset{\thicksim}{\to} D^b(X).
\end{equation*}
%\item[(ii)]  The kernel for the relative integral functor $\Psi$ (resp.\ $\Phi$) has finite Tor-dimension as a complex of $\OO_X$-modules (resp.\ $\OO_Y$-modules).
\item[(ii)] The morphisms $\pi, \hat{\pi}$ are both flat.
\end{itemize}
Note that, by property (i) and our assumption that $X, Y, S$ are all projective, the kernels of the relative integral functors $\Psi$ and $\Phi$ both have finite homological dimensions, as complexes of $\OO_X$-modules or $\OO_Y$-modules, respectively \cite[Proposition 2.10]{RMS}.  This ensures,  that given any morphism of varieties $S' \to S$, the corresponding base changes $\Psi_{S'} : D(X_{S'}) \to D(Y_{S'})$ and $\Phi_{S'} : D(Y_{S'}) \to D(X_{S'})$
   still take the bounded derived categories $D^b(X_{S'})$ and $D^b(Y_{S'})$ into each other \cite[Section 6.1.1]{FMNT}.

Let  $\pi, \wh{\pi}$ be as above.  The following notations will be used throughout this article:
\begin{enumerate}
\item For any variety $W$, we will write $D(W)=D^b(\Coh (W))$ to denote the bounded derived category of coherent sheaves on $W$ unless otherwise stated.
\item For any closed point $s \in S$, we will write $\iota_s$ (resp.\  $j_s$) to denote the closed immersion of the fiber $X_s \to X$ (resp.\ $Y_s \to Y$) of $\pi$ (resp.\ $\wh{\pi}$) over  $s$.  When $E$ is a coherent sheaf on $X$ (resp.\ on $Y$), we will write $E|_s$ to denote the restriction $\iota_s^\ast E$ (resp.\ the restriction $j_s^\ast E$), and write $E|_s^L$ to denote the derived restriction $L\iota_s^\ast E$ (resp.\ the derived restriction $Lj_s^\ast E$).
\item We will write $\mathcal B_X$ to denote the category of coherent sheaves $E$ on $X$ such that $E|_s$ is zero for a general closed point $s \in S$.  We similarly define $\mathcal B_Y$.
\item For any Abelian category $\mathcal A$ and any $E \in D(\mathcal A)$, we will write $H^i (E)$ to denote the degree-$i$ cohomology of $E$ with respect to the standard t-structure on $D(\mathcal A)$.  When $\mathcal B$ is the heart of a t-structure on $D(\mathcal A)$,  for  any $E \in D(\mathcal A)$, we will write $H^i_{\mathcal B}(E)$ to denote the degree-$i$ cohomology of $E$ with respect to the t-structure with heart $\mathcal B$.  We will also define, for any integers $j,k$,
\[
  D^{[j,k]}_{\mathcal B} := D^{[j,k]}_{\mathcal B}(\mathcal A) := \{ E \in D(\mathcal A) : H^i_{\mathcal B}(E)=0 \text{ for all }i \notin [j,k]\}.
\]
\item For each integer $i$, we will write $W_{i,X}$ to denote the category of coherent sheaves $E$ on $X$ such that
\begin{equation}\label{eq62}
\Psi (E) \cong \wh{E} [-i]
 \end{equation}
 for some $\wh{E} \in \Coh (Y)$, and refer to objects in $W_{i,X}$ as $\Psi$-WIT$_i$ sheaves on $X$.  For a $\Psi$-WIT$_i$ sheaf $E$ on $X$, we will refer to $\wh{E}$ in \eqref{eq62} as the transform of $E$.  We similarly define $W_{i,Y}$ and $\Phi$-WIT$_i$ sheaves for any integer $i$, with $\Psi$ replaced by $\Phi$ and $X$ replaced by $Y$ in the definitions above.
\item (WIT$_i$ sheaves) For any integer $i$, we will write $\Psi^i (-)$ to denote the composite functor $H^i (\Psi (-))$, where $H^i$ is the cohomology functor with respect to the standard t-structure on $D(Y)$.  Similarly, we write $\Phi^i (-) := H^i (\Phi (-))$.
\item If a coherent sheaf $E$ on $X$ is supported on a finite number of fibers of $\pi$, then we will refer to it as a fiber sheaf.  Similarly for coherent sheaves on $Y$.
\item (Torsion pairs) Given an Abelian category $\mathcal A$, recall that a torsion pair $(\mathcal T, \mathcal F)$ in $\mathcal A$ is a pair of full subcategories of $\mathcal A$ satisfying the following two conditions:
    \begin{enumerate}
    \item Every $E \in \mathcal A$ fits in a short exact sequence in $\mathcal A$
        \[
        0 \to T \to E \to F \to 0
        \]
        where $T \in \mathcal T$ and $F \in \mathcal F$;
    \item $\Hom_{\mathcal A}(T,F)=0$ for any $T \in \mathcal T, F \in \mathcal F$.
    \end{enumerate}
    Whenever we have a torsion pair $(\mathcal T, \mathcal F)$ in an Abelian category $\mathcal A$, we will refer to $\mathcal T$ as the torsion class of the torsion pair, and $\mathcal F$ as the torsion-free class of the torsion pair.  We will say that a subcategory $\mathcal C$ of an Abelian category $\mathcal A$ is a torsion class if it is the torsion class of a torsion pair in $\mathcal A$.
\item Whenever we have a proper morphism $f : V \to W$ of noetherian schemes, we will write $\Coh(f)_b$ to denote the category of coherent sheaves $E$ on $V$ such that $\dimension (f (\text{supp}(E)))=b$ for any $b\geq 0$.  We will also write  $\Coh (f)_{\leq b}$ to denote the category of coherent sheaves $E$ on $X$ with $\dimension (f (\supp (E))) \leq b$.  For any $b \geq 0$, the category $\Coh (f)_{\leq b}$ is a Serre subcategory (i.e.\ closed under subobjects, extensions and quotients in $\Coh(V)$), and in particular,  is the torsion class of a torsion pair in $\Coh (V)$.  The category $\Coh (f)_0 = \Coh (f)_{\leq 0}$ is precisely the category of fiber sheaves on $V$.  Also, when $f$ is flat of relative dimension 1 and $W$ is irreducible of dimension $d$, we have $\Coh (f)_{\leq d} = \mathcal B_V$.
\item For any variety $W$, when $\mathcal C_1,\cdots, \mathcal C_n$ are subcategories of  $D(W)$, we will write $\langle \mathcal C_1, \cdots, \mathcal C_n \rangle$ to denote the extension closure generated by the $\mathcal C_i$ in $D(W)$.
\item For any noetherian scheme $W$ and any integer $d \geq 0$, we will write $\Coh^{\leq d}(W)$ to denote the category of coherent sheaves on $W$ supported in dimension at most $d$, write $\Coh^{=d}(W)$ to denote the category of pure $d$-dimensional coherent sheaves, and write $\Coh^{\geq d}(W)$ to denote the category of coherent sheaves with no nonzero subsheaves supported in dimension $d-1$ or lower.
\item When $W$ is a smooth projective variety, we sometimes write $\mathcal T_W$ to denote the category of coherent sheaves on $W$ that are torsion, and write $\mathcal F_W$ to denote the category of coherent sheaves on $W$ that are torsion-free.
\item For a fixed variety $W$ and a full subcategory $\mathcal C$ of $\Coh (W)$, we will define
    \[
      \mathcal C^\circ := \{ E \in \Coh (W) : \Hom_{\Coh (W)}(C,E)=0 \text{ for all } C \in \mathcal C\}.
    \]
\item In a noetherian Abelian category $\mathcal A$ (such as $\Coh (W)$ for a noetherian scheme $W$), whenever we have a full subcategory $\mathcal C \subseteq \mathcal A$ that is closed under extensions and quotients in $\mathcal A$, we have a torsion pair $(\mathcal C, \mathcal C^\circ)$ in $\mathcal A$ by \cite[Lemma 1.1.3]{Pol}.
\item Whenever we have a torsion pair $(\mathcal T, \mathcal F)$ in an Abelian category $\mathcal A$, there is a corresponding t-structure $(D^{\leq 0}, D^{\geq 0})$ on the derived category $D(\mathcal A)$ given by
    \begin{align*}
      D^{\leq 0} &:= \{ E \in D(\mathcal A) : H^0(E) \in \mathcal T, H^i(E)=0 \text{ for all $i>0$}\},\\
      D^{\geq 0} &:= \{ E \in D(\mathcal A) : H^{-1}(E) \in \mathcal F, H^i(E)=0 \text{ for all $i<-1$ }\}.
    \end{align*}
\item (Elliptic fibrations) By an elliptic fibration, we will mean a proper flat morphism $\pi : X \to S$ such that the generic fiber of $\pi$ is a smooth elliptic curve.  (By the flatness of $\pi$, it follows that all fibers of $\pi$ are 1-dimensional.)  We will refer to $\pi$ (or $X$) as an elliptic surface when $\dimension X=2$, and as an elliptic threefold when $\dimension X=3$.

    In addition, we will say $\wh{\pi}$ is a dual elliptic fibration, or say $\pi$ and $\wh{\pi}$ are a pair of dual elliptic fibrations, when $\pi, \wh{\pi}$ are elliptic fibrations of the same dimension satisfying conditions (i) and (ii), where the kernels of $\Psi$ and $\Phi$ in condition (i) are both coherent sheaves sitting at degree 0, flat over both $X$ and $Y$, and we have $\Phi \Psi = \mathrm{id}_{D(X)} [-1], \Psi \Phi = \mathrm{id}_{D(Y)}[-1]$.

    Note that, since $\Psi$ and $\Phi$ are assumed to be relative integral functors, all 0-dimensional sheaves on $X$ are  $\Psi$-WIT$_0$, and are taken to pure 1-dimensional fiber sheaves on $Y$ which are $\Phi$-WIT$_1$.
\end{enumerate}

\begin{example}
The prototypical examples of dual elliptic fibrations $\pi : X \to S$ and $\wh{\pi}: Y \to S$ satisfying our definition above include:
 \begin{itemize}
 \item Elliptic surfaces $\pi : X \to S$ considered by Bridgeland in \cite{FMTes}, or elliptic threefolds $\pi : X \to S$ considered by Bridgeland-Maciocia in \cite{BMef}. In both cases, the fibration $\wh{\pi} : Y \to S$ is constructed as a relative moduli space of stable sheaves on the fibers of $\pi$, and the singular fibers of $\pi$ are not necessarily irreducible.  If $\mathscr P$ denotes the universal sheaf on $Y \times X$ for the above moduli problem, then the relative integral functor $\Psi : D(X) \to D(Y)$ with kernel $\mathscr P$ is a Fourier-Mukai transform.
 \item Weierstrass  fibrations $\pi : X \to S$ (which are elliptic fibrations) in the sense of \cite[Section 6.2]{FMNT}, where all the fibers are Gorenstein and geometrically integral. In this case, $Y$ can be taken as the Altman-Kleiman compactification of the relative Jacobian of $X$, and $\Psi : D(X) \to D(Y)$ taken to be the relative Fourier-Mukai transform with the relative Poincar\'{e} sheaf as the kernel.
 \end{itemize}
 In both cases, a quasi-inverse $\Phi : D(Y) \to D(X)$ can always be constructed making $\pi, \wh{\pi}$ a pair of dual fibrations in the sense  above.  In particular, the kernels of $\Psi$ and $\Phi$ are both coherent sheaves sitting at degree 0, flat over both factors of $X \times Y$.
\end{example}

\section{General constructions on fibrations}

\subsection{Base change formulas}\label{sec-bcf}

Suppose $\pi : X \to S$ and $\wh{\pi} : Y \to S$ are a pair of proper morphisms of varieties  satisfying properties (i) and (ii) as in the beginning of Section \ref{beginning}.  Then we have the following base change formula \cite[(6.3)]{FMNT}:
\begin{equation}\label{eq6}
{j_s}_\ast \Psi_s (E) \cong \Psi ({\iota_s}_\ast E) \text{\quad for all $E \in D(X_s)$}.
\end{equation}
  Note that, this is where condition (ii) comes in, since \eqref{eq6} depends on the morphism $\hat{\pi}$ being flat.

  Assuming additionally that  the kernel for the relative integral functor $\Psi$ (resp.\ $\Phi$) has finite Tor-dimension as a complex of $\OO_X$-modules (resp.\ $\OO_Y$-modules), we have the following well-known observation as a consequence of the base change \eqref{eq6}; we omit its proof:

\begin{lemma}\label{lemma14}
For every closed point $s \in S$, the induced integral functors $\Psi_s : D(X_s) \to D(Y_s)$ and $\Phi_s : D(Y_s) \to D(X_s)$ are equivalences.
\end{lemma}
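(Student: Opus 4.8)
The plan is to deduce the statement from hypothesis (i), namely that the composites $\Phi\Psi$ and $\Psi\Phi$ are the identity up to shift, together with the compatibility of relative integral functors with base change. Write $K_\Psi \in D(X\times_S Y)$ and $K_\Phi \in D(Y\times_S X)$ for the kernels of $\Psi$ and $\Phi$. Since $\{s\}\hookrightarrow S$ is a closed point and the fibre product $(X\times_S Y)_s$ is simply $X_s\times Y_s$, the first step is to recall that $\Psi_s$ is itself the integral functor $D(X_s)\to D(Y_s)$ whose kernel is the derived restriction of $K_\Psi$ to $X_s\times Y_s$, and likewise for $\Phi_s$. The finite Tor-dimension assumption on the kernels is exactly what guarantees that these restricted kernels are bounded and that $\Psi_s,\Phi_s$ preserve the bounded derived categories, as recalled in Section \ref{beginning} following \cite[Section 6.1.1]{FMNT}.

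The key step is the base-change compatibility of the convolution of kernels. Realising $\Phi\circ\Psi$ as the integral functor on $D(X)$ with kernel $K_\Phi \ast K_\Psi = Rp_{13\ast}\big(p_{12}^\ast K_\Psi \Lo p_{23}^\ast K_\Phi\big)$ on $X\times_S X$, where the $p_{ij}$ are the projections from the relative triple product $X\times_S Y\times_S X$, I would show that its derived restriction to the fibre $X_s\times X_s$ agrees with the convolution over $X_s\times Y_s\times X_s$ of the restricted kernels, that is, with the kernel of $\Phi_s\circ\Psi_s$. This amounts to commuting the derived restriction past the pullbacks $p_{ij}^\ast$, the derived tensor product, and the pushforward $Rp_{13\ast}$; the first two are formal, while the last requires a derived base-change isomorphism for the cartesian square cutting out the fibre over $s$. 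I expect this to be the main obstacle, and it is precisely where the finite Tor-dimension of $K_\Psi,K_\Phi$ and the flatness of $\pi,\hat{\pi}$ enter, making the relevant fibre squares Tor-independent and hence the base-change morphisms isomorphisms. This is the technical content packaged in the base-change formalism of \cite{FMNT}, of which the formula \eqref{eq6} is the simplest instance.

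Granting this compatibility, the conclusion is immediate. By (i) we have $\Phi\Psi\cong\mathrm{id}_{D(X)}[m]$ for some integer $m$, so $K_\Phi\ast K_\Psi\cong\OO_{\Delta}[m]$ for the relative diagonal $\Delta\subseteq X\times_S X$. The fibre of $\Delta$ over $s$ is the diagonal $\Delta_s\subseteq X_s\times X_s$, so its derived restriction is $\OO_{\Delta_s}[m]$, which is precisely the kernel of $\mathrm{id}_{D(X_s)}[m]$; hence $\Phi_s\Psi_s\cong\mathrm{id}_{D(X_s)}[m]$. Running the same argument with $\Psi\Phi\cong\mathrm{id}_{D(Y)}$ up to shift gives $\Psi_s\Phi_s\cong\mathrm{id}_{D(Y_s)}$ up to shift. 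Thus $\Psi_s$ and $\Phi_s$ are mutually quasi-inverse up to shift, and in particular both are equivalences, as claimed. One could instead try to argue directly from \eqref{eq6} and its analogue $\iota_{s\ast}\Phi_s\cong\Phi j_{s\ast}$, but since $\iota_{s\ast}$ is not fully faithful on the derived category this only yields isomorphisms after applying $\iota_{s\ast}$, and descending to a functorial statement on $D(X_s)$ is cleanest at the level of kernels.
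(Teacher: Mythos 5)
The paper itself omits the proof of this lemma, recording it only as a well-known consequence of the base-change formula \eqref{eq6}, so there is no written proof to compare line by line; what you have produced is a correct reconstruction of the standard kernel-level argument found in the sources the paper leans on (\cite[Section 6.1]{FMNT}, \cite{RMS}). Your structural choices are sound: the convolution $K_\Phi\ast K_\Psi$ does commute with derived restriction to $X_s\times X_s$, and the required base change is unproblematic because $p_{13}\colon X\times_S Y\times_S X\to X\times_S X$ is proper and flat (it is the base change of the flat $\hat{\pi}$ along $X\times_S X\to S$) --- exactly where hypothesis (ii) enters, as you predicted. Two steps deserve to be made explicit. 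First, the inference from the isomorphism of functors $\Phi\Psi\cong\mathrm{id}_{D(X)}[m]$ to the isomorphism of kernels $K_\Phi\ast K_\Psi\cong\OO_{\Delta}[m]$ is not formal: it uses Orlov's uniqueness of Fourier--Mukai kernels for equivalences of smooth projective varieties, applied after pushing forward along the closed immersion $\iota\colon X\times_S X\hookrightarrow X\times X$, together with a small descent step --- harmless here because the target is a shifted sheaf, and $\iota_\ast$ is exact, conservative, and fully faithful on sheaves, so $\iota_\ast A\cong\iota_\ast\OO_{\Delta}[m]$ does force $A\cong\OO_{\Delta}[m]$; alternatively one can bypass uniqueness altogether, as in \cite{RMS}, by expressing the unit and counit of the adjunction as morphisms of kernels and restricting those isomorphisms to the fibre. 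Second, your assertion that the derived restriction of $\OO_{\Delta}$ to $X_s\times X_s$ is $\OO_{\Delta_s}$ with no higher Tor terms uses that the relative diagonal $\Delta\cong X$ is flat over $S$ via $\pi$; this would fail for the absolute diagonal in $X\times X$, so your insistence on working over $S$ is essential rather than cosmetic. With these two points spelled out the proof is complete, and your closing observation --- that \eqref{eq6} alone only yields isomorphisms after applying $\iota_{s\ast}$, which is not fully faithful on derived categories --- correctly identifies why the kernel-level formulation is the clean route to the statement the paper takes for granted.
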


The following is a second  base change formula useful to us, which depends on $\pi$ being flat \cite[(6.2)]{FMNT}: with $\iota_s, j_s$ as above, for any $E \in D(X)$ we have
\begin{equation}\label{eq57}
Lj_s^\ast \Psi (E) \cong \Psi_s (L\iota_s^\ast E)
\end{equation}
This leads to the following observation that we will use frequently:

\begin{lemma}\label{lemma66}
For any $E \in D(X)$, we have $\pi (\supp (E)) = \hat{\pi} (\supp (\Psi E))$.
\end{lemma}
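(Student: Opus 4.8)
The plan is to reduce the statement to a fibrewise criterion and then transport it across $\Psi$ using the base change \eqref{eq57} together with the fibrewise equivalences of Lemma \ref{lemma14}. The central observation I would isolate first is the following support criterion: for any $E \in D(X)$ and any closed point $s \in S$, one has $s \in \pi(\supp(E))$ if and only if $L\iota_s^\ast E \neq 0$ in $D(X_s)$. Granting this (and its analogue for $\hat{\pi}$ on $Y$), the lemma becomes immediate: for each closed point $s$, formula \eqref{eq57} gives $Lj_s^\ast \Psi(E) \cong \Psi_s(L\iota_s^\ast E)$, and since $\Psi_s$ is an equivalence by Lemma \ref{lemma14}, $L\iota_s^\ast E \neq 0$ holds iff $Lj_s^\ast \Psi(E) \neq 0$. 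Chaining the criterion on $X$, this equivalence, and the criterion on $Y$ yields $s \in \pi(\supp(E)) \iff s \in \hat{\pi}(\supp(\Psi E))$. As $\pi, \hat{\pi}$ are proper, both $\pi(\supp(E))$ and $\hat{\pi}(\supp(\Psi E))$ are closed subsets of $S$, hence determined by their closed points; agreement on all closed points therefore gives equality of the two sets.

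The bulk of the work lies in establishing the support criterion. Here I would first recall the pointwise version at a single closed point $x \in X$: writing $i_x \colon \Spec \kappa(x) \to X$, one has $x \in \supp(E)$ iff $Li_x^\ast E \neq 0$. The nontrivial direction is $x \in \supp(E) \Rightarrow Li_x^\ast E \neq 0$, which I would prove by taking $k$ to be the top degree with $H^k(E)_x \neq 0$ and using the hyper-Tor spectral sequence $\mathrm{Tor}_{-p}^{\OO_X}(H^q(E), \kappa(x)) \Rightarrow H^{p+q}(Li_x^\ast E)$: the corner term at $p+q=k$ is $H^k(E)_x \otimes \kappa(x)$, which is nonzero by Nakayama and survives, as no differential enters or leaves it. To pass from points to the whole fibre I would factor $i_x = \iota_s \circ i'_x$, where $i'_x \colon \Spec\kappa(x)\to X_s$ and $s = \pi(x)$, giving $Li_x^\ast E \cong L(i'_x)^\ast(L\iota_s^\ast E)$. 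Applying the pointwise criterion once on $X$ and once on the fibre $X_s$ then yields the string $L\iota_s^\ast E \neq 0 \iff \exists\, x \in X_s \text{ with } Li_x^\ast E \neq 0 \iff X_s \cap \supp(E) \neq \emptyset \iff s \in \pi(\supp(E))$, which is the desired criterion.

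For this argument to be legitimate I need $L\iota_s^\ast E$ to lie in $D^b(X_s)$, so that the pointwise criterion may be applied to it on the fibre; this is where the standing hypotheses enter, since the flatness of $\pi$ together with the smoothness of $S$ makes $\iota_s$ a regular embedding (the pullback under the flat map $\pi$ of a regular system of parameters at $s \in S$ remains a regular sequence on $X$), so $\OO_{X_s}$ has finite Tor-dimension over $\OO_X$ and $L\iota_s^\ast$ preserves bounded complexes with coherent cohomology. I expect the main obstacle to be precisely the non-vanishing half of the pointwise support criterion, that is, controlling the top cohomology of $Li_x^\ast E$ against possible cancellation in the derived restriction; the spectral-sequence-plus-Nakayama argument above is what resolves it, and once it is in place the remainder of the proof is formal.
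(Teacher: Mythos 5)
Your proof is correct and follows essentially the same route as the paper: the paper's argument also works one closed point $s \in S$ at a time, using the base change \eqref{eq57} to identify $(\Psi E)|^L_s \cong \Psi_s (E|^L_s)$ and concluding via the (implicit) criterion that $s \in \pi(\supp (E))$ if and only if $E|^L_s \neq 0$. The only cosmetic differences are that the paper obtains the reverse inclusion by symmetry (running the same one-line argument with $\Phi$ applied to $\Psi E$) rather than by invoking conservativity of the fibrewise equivalence $\Psi_s$ from Lemma \ref{lemma14}, and that it treats as standard the Nakayama-plus-spectral-sequence support criterion you verify in detail.
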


\begin{proof}
Take any $s \in S \setminus \pi (\supp (E))$.  Then $0=E|^L_s$, and we have $0 =\Psi_s (E|^L_s) \cong (\Psi E)|^L_s$ by the base change \eqref{eq57}, i.e.\ $s \in S \setminus \hat{\pi} (\supp (\Psi E))$.  In other words, we have $\hat{\pi} (\supp (\Psi E)) \subseteq \pi (\supp (E))$.  By symmetry, we have equality.
\end{proof}

An immediate consequence of the base change formula \eqref{eq6} is the following:

\begin{lemma}\label{lemma15}
If $l,m$ are  integers such that $\Psi^i (E)=0$ for all $i \notin [l,m]$ and for all $E \in \Coh (X)$, then we have $\Psi_s^i (F)=0$ for any closed point $s \in S$, $i \notin [l,m]$ and any $F \in \Coh (X_s)$.
\end{lemma}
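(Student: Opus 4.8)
The plan is to reduce the statement about the fiberwise functor $\Psi_s$ to the given hypothesis about $\Psi$ by pushing sheaves on the fiber $X_s$ forward along the closed immersion $\iota_s$ and then applying the base change formula \eqref{eq6}. The two facts about closed immersions that make this work are that the pushforward ${j_s}_\ast$ is exact and that it is faithful (equivalently, ${j_s}_\ast G = 0$ if and only if $G=0$); both are standard, so there is really no serious obstacle here — the content is entirely in \eqref{eq6}.

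Concretely, I would fix a closed point $s \in S$ and a sheaf $F \in \Coh (X_s)$, and first observe that ${\iota_s}_\ast F$ is a coherent sheaf on $X$, since $\iota_s$ is a closed immersion (hence finite, and ${\iota_s}_\ast$ is exact). By the hypothesis applied to the sheaf ${\iota_s}_\ast F \in \Coh (X)$, we have $H^i(\Psi ({\iota_s}_\ast F)) = \Psi^i({\iota_s}_\ast F) = 0$ for all $i \notin [l,m]$. Next I would invoke \eqref{eq6}, which gives ${j_s}_\ast \Psi_s (F) \cong \Psi ({\iota_s}_\ast F)$ in $D(Y)$, regarding $F$ as an object of $D(X_s)$. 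Because ${j_s}_\ast$ is exact, it commutes with the cohomology functors $H^i$ with respect to the standard t-structure, so $H^i({j_s}_\ast \Psi_s (F)) \cong {j_s}_\ast H^i(\Psi_s (F)) = {j_s}_\ast \Psi_s^i(F)$. Combining these isomorphisms yields ${j_s}_\ast \Psi_s^i(F) \cong \Psi^i({\iota_s}_\ast F) = 0$ for every $i \notin [l,m]$.

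Finally, since ${j_s}_\ast$ is faithful (a closed immersion detects the zero sheaf), the vanishing of ${j_s}_\ast \Psi_s^i(F)$ forces $\Psi_s^i(F) = 0$ for all $i \notin [l,m]$, which is exactly the desired conclusion. The step I would be most careful to record is the exactness of ${j_s}_\ast$, which is what lets me pass the cohomology functor through the pushforward in \eqref{eq6}; everything else is bookkeeping, and the hypothesis $\Psi^i({\iota_s}_\ast F)=0$ for $i\notin[l,m]$ does all the real work.
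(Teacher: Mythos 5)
Your proof is correct and is exactly the argument the paper intends: the lemma is stated there as an immediate consequence of the base change formula \eqref{eq6}, with the implicit steps being precisely the ones you spell out, namely applying the hypothesis to ${\iota_s}_\ast F$ and using the exactness and faithfulness of ${j_s}_\ast$ to transfer the vanishing of cohomology back to $\Psi_s (F)$. Nothing is missing, and your care in recording the exactness of the pushforward is exactly the right point to highlight.
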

As a result, if $\Psi$ is a relative integral functor that takes coherent sheaves on $X$ to $n$-term complexes, then for any closed point $s \in S$, the base change $\Psi_s$  also takes coherent sheaves on the fiber $X_s$ to $n$-term complexes.

\begin{remark}\label{remark3}
From \cite[Corollary 6.3]{FMNT}, we know that, if $n := p + m_0$ where $p$ is the dimension of the fibers of $\pi$, and $m_0$ is the largest index $m$ such that $H^m(K)=0$, where $K \in D(X \times_S Y)$ is the kernel of the relative integral functor $\Psi$, then we have, for any $E \in \Coh (X)$, the base change
\begin{equation}\label{eq7}
 \Psi^n (E)|_s \cong \Psi^n_s (E|_s)  \text{\quad for any closed point $s \in S$}.
\end{equation}
Given Lemma \ref{lemma15}, we can think of the base change \eqref{eq7} as saying: for a coherent sheaf $E$ on $X$, `the right-most cohomology of $\Psi (E)$ vanishes if and only if the same holds on each fiber'.
\end{remark}

Borrowing notation from \cite{Lo9}, we  define, for any base change morphism $S' \to S$,  the subcategory of $\Coh (X_{S'})$
\[
  \mathcal B_{i,X_{S'}} := \{ E \in \Coh (X_{S'}) : \Psi_{S'}^i (E)=0 \}
\]
for any integer $i$.  We similarly define $\mathcal B_{i,Y_{S'}}$ (using the vanishing of $\Phi_{S'}^i$) for any morphism $S' \to S$. The interpretation of \eqref{eq7} at the end of Remark \ref{remark3} can now be stated precisely as follows:

\begin{lemma}\label{lemma19}
Let $n$ be as in Remark \ref{remark3}.  Then for any $E \in \Coh (X)$, we have $E \in \mathcal B_{n,X}$ if and only if $E|_s \in \mathcal B_{n, X_s}$ for every closed point $s \in S$.
\end{lemma}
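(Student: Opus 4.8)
The plan is to reduce the statement to two ingredients: the fiberwise base-change isomorphism \eqref{eq7} recorded in Remark \ref{remark3}, and the elementary fact that a coherent sheaf on $Y$ vanishes precisely when its restriction to every closed fiber of $\hat{\pi}$ vanishes. Indeed, this lemma is nothing more than the precise form of the heuristic stated at the end of Remark \ref{remark3}, namely that the right-most cohomology of $\Psi(E)$ vanishes if and only if it does so fiberwise.

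First I would unwind the definitions. By definition $E \in \mathcal B_{n,X}$ means $\Psi^n(E)=0$, while $E|_s \in \mathcal B_{n,X_s}$ means $\Psi_s^n(E|_s)=0$. Since $n$ is chosen as in Remark \ref{remark3}, the base change \eqref{eq7} applies in this top degree and gives a natural isomorphism $\Psi^n(E)|_s \cong \Psi_s^n(E|_s)$ of coherent sheaves on $Y_s$. Writing $G := \Psi^n(E) \in \Coh(Y)$ and recalling $G|_s = j_s^\ast G$, the condition that $E|_s \in \mathcal B_{n,X_s}$ for every closed point $s$ is therefore equivalent to $G|_s = 0$ for every closed point $s \in S$. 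Thus the lemma is reduced to the claim that $G=0$ if and only if $G|_s=0$ for every closed point $s \in S$.

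The forward direction of this claim is immediate. For the converse I would argue by contradiction using the support. Suppose $G \neq 0$; then $\supp(G)$ is a nonempty closed subset of $Y$ and hence contains a closed point $y$, at which the stalk $G_y$ is a nonzero finitely generated $\OO_{Y,y}$-module. Setting $s = \hat{\pi}(y)$, which is a closed point of $S$ since $\hat{\pi}$ is a morphism of varieties, we have $y \in Y_s$, and the stalk of $j_s^\ast G$ at $y$ is $G_y \otimes_{\OO_{Y,y}} \OO_{Y_s,y} = G_y / \mathfrak{m}_s G_y$. Because the local homomorphism $\OO_{S,s} \to \OO_{Y,y}$ sends $\mathfrak{m}_s$ into $\mathfrak{m}_{Y,y}$, this module surjects onto $G_y \otimes_{\OO_{Y,y}} \kappa(y)$, which is nonzero by Nakayama's lemma; hence $(j_s^\ast G)_y \neq 0$, so $G|_s \neq 0$, a contradiction.

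I do not expect any serious obstacle here: the substantive content sits in the base change \eqref{eq7}, which is already available, and the remainder is the standard argument that vanishing of a coherent sheaf is detected on closed fibers. The only point requiring a little care is to apply the base change in the correct cohomological degree, and this is exactly why $n$ is taken as in Remark \ref{remark3}: the isomorphism \eqref{eq7} holds for that specific top degree $n$ rather than for an arbitrary index $i$.
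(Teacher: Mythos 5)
Your proposal is correct and follows essentially the same route as the paper: the base change isomorphism \eqref{eq7} in the top degree $n$, combined with the fact that vanishing of the coherent sheaf $\Psi^n(E)$ is detected on the closed fibers of $\hat{\pi}$. The paper's proof is simply terser, leaving your Nakayama-type argument implicit, so there is nothing to fix.
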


\begin{proof}
From Lemma \ref{lemma14}, we know $\Psi_s$ is an equivalence.  The claim then follows from \eqref{eq7}.
\end{proof}

When the morphism $\pi$ has relative dimension 1 and the kernel of $\Psi$ is a sheaf sitting at degree 0, we have $n=1$ where $n$ is as in Remark \ref{remark3}.  It then follows that $\mathcal B_{1,X}=W_{0,X}$ (and similarly for $Y$).  In other words, we have the following interpretation of WIT$_0$ sheaves on fibrations of relative dimension 1:

\begin{lemma}\label{lemma20}
Suppose $\pi$ has relative dimension 1, and the kernel of the integral functor $\Psi$ is a sheaf (sitting at degree 0).  Then for any $E \in \Coh (X)$, we have that $E$ is $\Psi$-WIT$_0$ if and only if $E|_s$ is $\Psi_s$-WIT$_0$ for every closed point $s \in S$.
\end{lemma}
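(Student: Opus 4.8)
The plan is to deduce the statement directly from the fiberwise criterion of Lemma \ref{lemma19}, once I reinterpret the WIT$_0$ condition, both on $X$ and on each fiber $X_s$, as the vanishing of the top Fourier--Mukai cohomology. Concretely, I would show that $E$ is $\Psi$-WIT$_0$ precisely when $E \in \mathcal B_{1,X}$, that $E|_s$ is $\Psi_s$-WIT$_0$ precisely when $E|_s \in \mathcal B_{1,X_s}$, and then invoke Lemma \ref{lemma19} to pass between the global and fiberwise vanishing of $\Psi^1$ and $\Psi_s^1$.

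The first step is to record that the integer $n$ of Remark \ref{remark3} equals $1$ here: the fibers of $\pi$ are $1$-dimensional ($p=1$) and the kernel $K$ is a sheaf in degree $0$ ($m_0=0$), so $n=p+m_0=1$. Consequently $\Psi$ sends every coherent sheaf on $X$ to a complex whose cohomology is concentrated in degrees $[0,1]$, so that $\Psi^i(E)=0$ for all $i \notin [0,1]$. For a sheaf $E$, being $\Psi$-WIT$_0$ means $\Psi(E)$ is concentrated in degree $0$, which—given the two-term range—is equivalent to the single condition $\Psi^1(E)=0$, i.e.\ to $E \in \mathcal B_{1,X}$. This is exactly the identification $W_{0,X}=\mathcal B_{1,X}$ already noted above.

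Next I would establish the analogous identification on the fibers. Applying Lemma \ref{lemma15} with $[l,m]=[0,1]$, the base-changed functor $\Psi_s$ also sends every coherent sheaf on $X_s$ to a complex concentrated in degrees $[0,1]$. Hence, exactly as above, a sheaf $F$ on $X_s$ is $\Psi_s$-WIT$_0$ if and only if $\Psi_s^1(F)=0$, i.e.\ if and only if $F \in \mathcal B_{1,X_s}$. Taking $F=E|_s$ gives the fiberwise identification in the range we need.

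With both identifications in hand, the conclusion follows by chaining equivalences: $E$ is $\Psi$-WIT$_0$ $\iff$ $E \in \mathcal B_{1,X}$ $\iff$ (Lemma \ref{lemma19}, applied with $n=1$) $E|_s \in \mathcal B_{1,X_s}$ for every closed point $s \in S$ $\iff$ $E|_s$ is $\Psi_s$-WIT$_0$ for every such $s$. The only genuinely substantive input is Lemma \ref{lemma19}, which rests on the base-change isomorphism $\Psi^n(E)|_s \cong \Psi_s^n(E|_s)$ of \eqref{eq7} together with the fact that $\Psi_s$ is an equivalence (Lemma \ref{lemma14}); so the main point to get right is simply that $n=1$, ensuring that the \emph{top} cohomology singled out by \eqref{eq7} is the same $\Psi^1$ whose vanishing encodes WIT$_0$. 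I do not expect any serious obstacle beyond this bookkeeping.
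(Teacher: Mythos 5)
Your proof is correct and is essentially the paper's own argument: the paper also observes that $n=1$ here (so $W_{0,X}=\mathcal B_{1,X}$, and likewise on fibers via Lemma \ref{lemma15}) and then deduces the statement from Lemma \ref{lemma19} with $n=1$. Your write-up merely makes explicit the bookkeeping that the paper records in the paragraph preceding the lemma.
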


\begin{proof}
This follows from Lemma \ref{lemma15}, together with Lemma \ref{lemma19} with $n=1$.
\end{proof}

\begin{remark}
Though innocuous-looking, Lemma \ref{lemma20} is a key lemma in this article.  On an elliptic fibration $\pi : X \to S$, the stability of a sheaf $F$ on $X$ is related to the stability of the restriction of $F$ to the generic fiber of $\pi$ - but this relation often depends on the Chern classes of $F$ (see \cite[Section 7.1]{FMTes} or \cite[Lemma 2.1]{BMef}).  By replacing stability with WIT$_i$ properties, we obtain a framework that is more compatible with base change.  For a fiber sheaf on an elliptic fibration that possesses a dual fibration, being WIT$_i$is inherently related to the structure of its HN filtration with respect to slope stability on the fibers (see \cite[Corollary 3.29]{FMNT}).
\end{remark}

\subsection{Torsion pairs induced from fibers}\label{section-induced}

Given any morphism of algebraic varieties $\pi : X \to S$, we describe here two recipes for constructing torsion pairs: one restricts a torsion pair on $X$ to torsion pairs on the fibers of $\pi$, while the other gives a torsion pair on $X$ induced from torsion pairs on the fibers of $\pi$.

Take any subcategory $\mathcal T$ of $\Coh (X)$, and fix any closed point $s \in S$.  Let $\iota_s$ denote the inclusion of the fiber $X_s \hookrightarrow X$.  Consider the following two subcategories of $\Coh (X_s)$:
\begin{align*}
  \mathcal T|_s &:= \{ F \in \Coh (X_s) : F \cong E|_s \text{ for some } E \in \mathcal T\}; \\
  \mathcal T' &:= \{ F \in \Coh (X_s) : \text{ there exists } E \twoheadrightarrow {\iota_s}_\ast F \text{ in $\Coh (X)$ for some $E \in \mathcal T$}\}.
\end{align*}
The inclusion $\mathcal T|_s \subseteq \mathcal T'$ is clear.  When $\mathcal T$ is closed under taking quotients in $\Coh (X)$, we also have the inclusion $\mathcal T' \subseteq \mathcal T|_s$: for any $F \in \mathcal T'$, suppose $E$ is an object in $\Coh (X)$ such that we have a surjection $E \twoheadrightarrow {\iota_s}_\ast F$ in $\Coh (X)$.  Then ${\iota_s}_\ast F$ lies in $\mathcal T$, and we have  $F \cong ({\iota_s}_\ast F)|_s$.

In particular,  when $\mathcal T$ is the torsion class of a torsion pair in $\Coh (X)$, the two subcategories $\mathcal T|_s$ and $\mathcal T'$ coincide.

\begin{lemma}\label{lemma16}
 Let $\mathcal T$ be a torsion class  in $\Coh (X)$.  Then, for each closed point $s \in S$, the category $\mathcal T|_s$ is a torsion class  in $\Coh (X_s)$.
\end{lemma}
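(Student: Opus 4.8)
The plan is to verify that $\mathcal{T}|_s$ satisfies the hypotheses of the torsion-pair criterion recalled in Notation item (14): since $\Coh(X_s)$ is a noetherian abelian category, by \cite[Lemma 1.1.3]{Pol} it suffices to show that the full subcategory $\mathcal{T}|_s$ is closed under quotients and under extensions in $\Coh(X_s)$. The engine for both verifications is the characterization
\[
  F \in \mathcal{T}|_s \iff {\iota_s}_\ast F \in \mathcal{T},
\]
which converts statements about $\Coh(X_s)$ into statements about $\Coh(X)$, where the closure properties of the torsion class $\mathcal{T}$ are available. The two ingredients that make this dictionary exact are the exactness of the pushforward ${\iota_s}_\ast$ along the closed immersion $\iota_s$, and the natural isomorphism $\iota_s^\ast {\iota_s}_\ast \cong \mathrm{id}$ on $\Coh(X_s)$.

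First I would establish the displayed characterization. The forward direction is already contained in the discussion preceding the lemma: since $\mathcal{T}$ is a torsion class we have $\mathcal{T}|_s = \mathcal{T}'$, and for any $F \in \mathcal{T}'$ there is a surjection $E \twoheadrightarrow {\iota_s}_\ast F$ with $E \in \mathcal{T}$, so ${\iota_s}_\ast F \in \mathcal{T}$ because $\mathcal{T}$ is closed under quotients. For the converse, if ${\iota_s}_\ast F \in \mathcal{T}$ then the isomorphism $F \cong \iota_s^\ast({\iota_s}_\ast F) = ({\iota_s}_\ast F)|_s$ exhibits $F$ as the restriction of an object of $\mathcal{T}$, whence $F \in \mathcal{T}|_s$.

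With the characterization in hand both closure properties are immediate. Given a quotient $F \twoheadrightarrow G$ in $\Coh(X_s)$ with $F \in \mathcal{T}|_s$, exactness of ${\iota_s}_\ast$ yields a surjection ${\iota_s}_\ast F \twoheadrightarrow {\iota_s}_\ast G$ in $\Coh(X)$; since ${\iota_s}_\ast F \in \mathcal{T}$ and $\mathcal{T}$ is closed under quotients, ${\iota_s}_\ast G \in \mathcal{T}$, hence $G \in \mathcal{T}|_s$. Likewise, a short exact sequence $0 \to F_1 \to F \to F_2 \to 0$ in $\Coh(X_s)$ with $F_1, F_2 \in \mathcal{T}|_s$ pushes forward, again by exactness of ${\iota_s}_\ast$, to a short exact sequence $0 \to {\iota_s}_\ast F_1 \to {\iota_s}_\ast F \to {\iota_s}_\ast F_2 \to 0$ in $\Coh(X)$ whose outer terms lie in $\mathcal{T}$; closure of $\mathcal{T}$ under extensions gives ${\iota_s}_\ast F \in \mathcal{T}$, so $F \in \mathcal{T}|_s$. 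Applying \cite[Lemma 1.1.3]{Pol} then shows $\mathcal{T}|_s$ is a torsion class in $\Coh(X_s)$.

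I expect the only real subtlety, and so the step I would treat most carefully, to be the exactness of this fiber/total-space dictionary: extensions and quotients of sheaves on the possibly non-reduced fiber $X_s$ are not a priori restrictions of objects of $\mathcal{T}$, and it is precisely the exactness of ${\iota_s}_\ast$ together with $\iota_s^\ast {\iota_s}_\ast \cong \mathrm{id}$ (both standard for closed immersions) that transports each fiber-level operation faithfully into $\Coh(X)$. Once this is confirmed, nothing further is needed beyond the closure properties of $\mathcal{T}$ and Polishchuk's criterion.
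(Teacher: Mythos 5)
Your proof is correct and follows essentially the same route as the paper's: the paper likewise reduces to closure under quotients and extensions (via the criterion of \cite[Lemma 1.1.3]{Pol}), handles quotients through the identification $\mathcal T|_s = \mathcal T'$, and handles extensions by pushing the short exact sequence forward along the exact functor ${\iota_s}_\ast$ and using $F \cong ({\iota_s}_\ast F)|_s$. Your explicit formulation of the dictionary $F \in \mathcal T|_s \iff {\iota_s}_\ast F \in \mathcal T$ is just a clean repackaging of the same argument.
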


\begin{proof}
To show that $\mathcal T|_s$ is a torsion class, it suffices to check that it is closed under quotients and extensions in $\Coh (X_s)$.  That $\mathcal T|_s$ is closed under quotients is clear from the description $\mathcal T|_s = \mathcal T'$ above.  Now, suppose we have $F_1, F_2 \in \mathcal T|_s$ and $F_i \cong E_i|_s$ for some $E_i \in \mathcal T$ for $i=1,2$.  Consider any extension in $\Coh (X_s)$
\[
0 \to F_1 \to F \to F_2 \to 0,
\]
which pushforwards to a short exact sequence in $\Coh (X)$
\[
0 \to {\iota_s}_\ast F_1 \to {\iota_s}_\ast F \to {\iota_s}_\ast F_2 \to 0.
\]
For each $i$, we have  ${\iota_s}_\ast F_i \in \mathcal T$, and so ${\iota_s}_\ast F$ also lies in $\mathcal T$.  Then $F \in \mathcal T|_s$ since $F \cong ({\iota_s}_\ast F)|_s$.
\end{proof}

\begin{definition}\label{def1}
Suppose that, for each closed point $s \in S$, we are given a subcategory $\mathcal T_s$ of $\Coh (X_s)$.  Then we set
\begin{equation}
  \{\mathcal T_s\}^\uparrow := \{ E \in \Coh (X) : E|_s \in \mathcal T_s \text{ for all closed points $s \in S$}\}.
\end{equation}
\end{definition}

%Note that, for any closed point $s\in S$, we can identify $\mathcal T|_s$ with a subcategory of $\mathcal T$ via pushforward along the closed immersion $X_s \hookrightarrow X$.

\begin{lemma}\label{lemma17}
Suppose that, for each closed point $s \in S$, we have a torsion class $\mathcal T_s$ in $\Coh(X_s)$.  Then the category $\{\mathcal T_s\}^\uparrow$ is the torsion class of a torsion pair in $\Coh (X)$.
\end{lemma}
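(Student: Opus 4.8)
The plan is to invoke the noetherian criterion recorded in item (14) of Section \ref{beginning} (that is, \cite[Lemma 1.1.3]{Pol}): since $\Coh (X)$ is noetherian and $\{\mathcal T_s\}^\uparrow$ is manifestly a full subcategory, it suffices to verify that $\{\mathcal T_s\}^\uparrow$ is closed under quotients and under extensions in $\Coh (X)$, whereupon $(\{\mathcal T_s\}^\uparrow, (\{\mathcal T_s\}^\uparrow)^\circ)$ is the desired torsion pair. The only facts about the fibers I will use are that each $\mathcal T_s$ is closed under quotients and extensions (being a torsion class in $\Coh (X_s)$), together with the right-exactness of the restriction functor $\iota_s^\ast = (-)|_s$.

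Closure under quotients is immediate. If $E \in \{\mathcal T_s\}^\uparrow$ and $E \twoheadrightarrow Q$ in $\Coh (X)$, then right-exactness of $(-)|_s$ produces a surjection $E|_s \twoheadrightarrow Q|_s$ for each closed point $s$; since $E|_s \in \mathcal T_s$ and $\mathcal T_s$ is closed under quotients, we get $Q|_s \in \mathcal T_s$ for every $s$, so $Q \in \{\mathcal T_s\}^\uparrow$.

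Closure under extensions is where the argument requires a little care, and is the main (if mild) obstacle, because $(-)|_s$ is only right exact, so restricting a short exact sequence on $X$ to a fiber need not stay short exact. Given $0 \to E_1 \to E \to E_2 \to 0$ with $E_1, E_2 \in \{\mathcal T_s\}^\uparrow$, right-exactness yields, for each $s$, an exact sequence $E_1|_s \to E|_s \to E_2|_s \to 0$. Let $K \subseteq E|_s$ denote the image of $E_1|_s \to E|_s$. Then $K$ is a quotient of $E_1|_s \in \mathcal T_s$, hence $K \in \mathcal T_s$; moreover the exactness above gives $E|_s / K \cong E_2|_s$, so $0 \to K \to E|_s \to E_2|_s \to 0$ is a short exact sequence in $\Coh (X_s)$ with both $K$ and $E_2|_s$ in $\mathcal T_s$. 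Since $\mathcal T_s$ is closed under extensions, $E|_s \in \mathcal T_s$. As this holds for every closed point $s$, we conclude $E \in \{\mathcal T_s\}^\uparrow$, which completes the verification of both closure properties and hence, by the cited criterion, the proof.
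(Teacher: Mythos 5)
Your proof is correct and is essentially identical to the paper's: both reduce, via the noetherian criterion of \cite[Lemma 1.1.3]{Pol}, to checking closure under quotients and extensions, and both handle extensions by restricting the short exact sequence to each fiber, observing that the image of $E_1|_s \to E|_s$ lies in $\mathcal T_s$ as a quotient of $E_1|_s$, and concluding $E|_s \in \mathcal T_s$ by closure of $\mathcal T_s$ under extensions. Your write-up is just slightly more explicit about the right-exactness of $\iota_s^\ast$ being the only subtlety.
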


\begin{proof}
Let us write $\mathcal T$ to denote $\{\mathcal T_s\}^\uparrow$.  It suffices to check that $\mathcal T$ is closed under quotients and extensions in $\Coh (X)$.  That $\mathcal T$ is closed under quotients is clear.  Now, take any $E_1, E_2 \in \mathcal T$ and consider the extension
\[
  0 \to E_1 \to E \to E_2 \to 0 \text{\quad in $\Coh (X)$}.
\]
Fixing any closed point $s \in S$ and restricting the above short exact sequence to $X_s$, we get the exact sequence
\[
  E_1|_s \overset{\al}{\to} E|_s \to E_2|_s \to 0 \text{\quad in $\Coh (X_s)$}.
\]
Since $\mathcal T_s$ is closed under quotients, the image of $\al$ lies in $\mathcal T_s$.  Then, because $\mathcal T_s$ is closed under extensions, we have  $E|_s \in \mathcal T_s$.
\end{proof}

\begin{remark}\label{remark4}
Given the constructions in Lemmas \ref{lemma16} and \ref{lemma17}, it is natural to ask: are the two constructions described mutually inverse?  In other words:
\begin{itemize}
\item[(a)] Given a torsion class $\mathcal T$ on $X$, do we have $\{ \mathcal T|_s \}^\uparrow = \mathcal T$?
\item[(b)] Given a torsion class  $\mathcal T_s$ in $\Coh (X_s)$ for each closed point $s \in S$,  do we have $(\{ \mathcal T_s\}^\uparrow)|_s = \mathcal T_s$ for each closed point $s \in S$?
\end{itemize}
In Lemma \ref{lemma18} below, we show that the answer to the question (b) is `yes'.  On the other hand, even though we always have the inclusion $\mathcal T \subseteq \{\mathcal T|_s\}^\uparrow$, without further assumptions on the varieties $X, S$ or the morphism $\pi$, the answer to the question (a) is \textit{a priori} a `no'.
%However, as shown in Lemma \ref{lemma18} below, the construction in Lemma \ref{lemma17} can be regarded as a `left-inverse' to the construction in Lemma \ref{lemma16}.  Furthermore, as shown in Example \ref{example1} below, if a suitable base change result (which may depend on the torsion pairs involved) holds for the fibers of $\pi$, then the answer to the above question would be yes.
\end{remark}

\begin{lemma}\label{lemma18}
Suppose that, for each closed point $s \in S$, we have a torsion class $\mathcal T_s$ in $\Coh(X_s)$.  Then
\begin{equation*}
   (\{\mathcal T_s\}^\uparrow)|_s = \mathcal T_s
\end{equation*}
for each closed point $s \in S$.
\end{lemma}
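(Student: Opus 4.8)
The plan is to prove the two inclusions $(\{\mathcal T_s\}^\uparrow)|_s \subseteq \mathcal T_s$ and $\mathcal T_s \subseteq (\{\mathcal T_s\}^\uparrow)|_s$ separately, with the first being immediate from the definitions and the second requiring one explicit construction. Throughout I fix the closed point $s$ appearing in the statement and let $s'$ range over the closed points of $S$ in the definition of $\{\mathcal T_s\}^\uparrow$. No geometric input about $\pi$ (flatness, relative dimension, or the Fourier--Mukai data) should be needed; the statement is purely formal.

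For the inclusion $(\{\mathcal T_s\}^\uparrow)|_s \subseteq \mathcal T_s$: given $F \in (\{\mathcal T_s\}^\uparrow)|_s$, the definition of $\mathcal T|_s$ supplies some $E \in \{\mathcal T_s\}^\uparrow$ with $F \cong E|_s$. But membership of $E$ in $\{\mathcal T_s\}^\uparrow$ forces $E|_{s'} \in \mathcal T_{s'}$ for \emph{every} closed point $s'$, in particular $E|_s \in \mathcal T_s$; since $\mathcal T_s$ is closed under isomorphism this gives $F \in \mathcal T_s$.

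For the reverse inclusion, the key step is, given $F \in \mathcal T_s$, to exhibit a single coherent sheaf on $X$ lying in $\{\mathcal T_s\}^\uparrow$ whose restriction to $X_s$ recovers $F$. The natural candidate is the pushforward $E := {\iota_s}_\ast F$. I would then verify two points. First, that $({\iota_s}_\ast F)|_s = \iota_s^\ast {\iota_s}_\ast F \cong F$: this is the standard fact that, for the closed immersion $\iota_s$, the underived pullback is a left inverse to the pushforward, checked locally by the identity $M/IM \cong M$ for an $A/I$-module $M$ (where $A \to A/I$ presents $\iota_s$). Second, that ${\iota_s}_\ast F \in \{\mathcal T_s\}^\uparrow$, i.e.\ that $({\iota_s}_\ast F)|_{s'} \in \mathcal T_{s'}$ for every closed point $s'$. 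For $s' = s$ this is exactly the first point, since $F \in \mathcal T_s$. For $s' \neq s$ I would argue that ${\iota_s}_\ast F$ is supported on the fiber $X_s$, which is set-theoretically disjoint from $X_{s'}$; hence every stalk of ${\iota_s}_\ast F$ at a point of $X_{s'}$ vanishes and so $({\iota_s}_\ast F)|_{s'} = \iota_{s'}^\ast ({\iota_s}_\ast F) = 0$, which lies in $\mathcal T_{s'}$ because a torsion class contains the zero object. Combining the two points yields ${\iota_s}_\ast F \in \{\mathcal T_s\}^\uparrow$ with $({\iota_s}_\ast F)|_s \cong F$, so $F \in (\{\mathcal T_s\}^\uparrow)|_s$.

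The only real subtlety is bookkeeping about which pullback is meant: the restriction $(-)|_{s'}$ in the statement is the \emph{underived} pullback $\iota_{s'}^\ast$ (per item~(2) of the notation), so the disjointness of $X_s$ and $X_{s'}$ genuinely forces the restriction to be $0$ without any Tor-vanishing hypothesis, and the local isomorphism $\iota_s^\ast {\iota_s}_\ast F \cong F$ applies directly. Once these are handled, the two inclusions give the desired equality.
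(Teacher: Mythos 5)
Your proof is correct and takes essentially the same route as the paper: the paper likewise handles the nontrivial inclusion by pushing $F$ forward along $\iota_s$, noting ${\iota_s}_\ast F \in \{\mathcal T_s\}^\uparrow$ and $F \cong \iota_s^\ast({\iota_s}_\ast F)$, and dismisses the other inclusion as immediate from the definitions. Your added verifications (the vanishing of $({\iota_s}_\ast F)|_{s'}$ for $s' \neq s$ via disjoint supports, and the observation that the underived pullback makes this Tor-free) merely spell out details the paper leaves implicit.
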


\begin{proof}
For any closed point $s \in S$ and $F \in \mathcal T_s$, we have ${\iota_s}_\ast F \in \{\mathcal T_s\}^\uparrow$.  Hence $F \cong \iota_s^\ast({\iota_s}_\ast F)$ lies in $(\{\mathcal T_s\}^\uparrow)|_s$.  The other inclusion follows directly from the definitions.
\end{proof}

\begin{example}\label{example1}
Let $\pi : X \to S$ and $\hat{\pi} : Y \to S$ be a pair of proper morphisms satisfying conditions (i) and (ii)  laid out in the beginning of Section \ref{beginning}, and let $n$ be as in Remark \ref{remark3}.  Then we have:
\begin{itemize}
\item[(a)] $\mathcal B_{n,X}|_s = \mathcal B_{n,X_s}$ for any closed point $s \in S$, and
\item[(b)] $\{ \mathcal B_{n,X} |_s \}^\uparrow=\{\mathcal B_{n,X_s}\}^\uparrow  = \mathcal B_{n,X}$.
\end{itemize}
To see why (a) holds, fix any closed point $s \in S$.  That $\mathcal B_{n,X}|_s \subseteq \mathcal B_{n,X_s}$ follows from \eqref{eq7}.  To show the other inclusion, take any $F \in \mathcal B_{n,X_s}$.  By \eqref{eq6}, we have ${\iota_s}_\ast F \in \mathcal B_{n,X}$, and so $F \cong \iota_s^\ast {\iota_s}_\ast F \in \mathcal B_{n,X}|_s$, giving us  (a).  In part (b), the first equality follows from part (a), while the second equality follows from Lemma \ref{lemma19}.  Therefore, $\mathcal T = \mathcal B_{n,X}$ is  an example of a torsion class in $\Coh (X)$ for which the answer to question (a) in Remark \ref{remark4} is `yes'.
%[don't be silly]
\end{example}

\begin{remark}\label{remark21}
Suppose $\pi, \wh{\pi}$ satisfy conditions (i) and (ii) in the beginning of Section \ref{beginning},  that they both have relative dimension 1, and that the kernels of $\Psi$ and $\Phi$ are both sheaves sitting at degree 0.  Then by Lemma \ref{lemma17}, the category $\{\Coh^{\leq 0}(Y_s)\}^\uparrow$ is a torsion class in $\Coh (Y)$, and by  Lemma \ref{lemma20}, every $E \in \{\Coh^{\leq 0}(Y_s)\}^\uparrow$ is $\Phi$-WIT$_0$.  As a result, the category $\Phi (\{\Coh^{\leq 0}(Y_s)\}^\uparrow)$ (which will be used frequently later on) is contained in $W_{1,X}$.
\end{remark}

We briefly return to the question (a) in Remark \ref{remark4}.  Let us write $H^i$ to denote the $i$-th cohomology functor with respect to the standard t-structure on either $D(X)$ or $D(X_s)$, for any closed point $s \in S$.  Given a torsion class $\mathcal T$ in $\Coh (X)$, let us also write $\mathcal H^i$ to denote the $i$-th cohomology functor with respect to the t-structure on $D(X)$ with heart $\langle \mathcal T^\circ [1], \mathcal T\rangle$, or the $i$-th cohomology functor with respect to the t-structure on $D(X_s)$ with heart $\langle \mathcal (\mathcal T|_s)^\circ [1], \mathcal T|_s\rangle$, for any closed point $s \in S$.  Then, for any coherent sheaf $E$ on $X$, we have
\[
  E \in \mathcal T \text{ if and only if } \mathcal H^i (E)=0 \text{ for all $i \neq 0$},
\]
and for any closed point $s \in S$
\[
  E \in \{\mathcal T|_s\}^\uparrow \text{ if and only if } \mathcal H^i (E|_s)=0 \text{ for all $i \neq 0$}.
\]
The condition that $\{\mathcal T|_s\}^\uparrow \subseteq \mathcal T$, which is equivalent to having a `yes' to question (a) in Remark \ref{remark4}, is now equivalent to the condition
\begin{itemize}
  \item[] For any coherent sheaf $E \in \Coh (X)$, if $\mathcal H^i(E|_s)=0$ for all $i \neq 0$ and all closed points $s \in S$, then $\mathcal H^i (E)=0$,
\end{itemize}
which can be thought of as a `Nakayama's Lemma-type' statement.

The following observation on 1-dimensional closed subschemes of the total space of a fibration will be used from time to time:

\begin{lemma}\label{lemma75}
Let $\pi : X \to S$ be a proper morphism of varieties of relative dimension 1.  Let $Z$ be any irreducible, 1-dimensional closed subscheme of $X$.  Then $Z$ is either of the following two types:
 \begin{itemize}
 \item[(i)] $Z$ is contained in a fiber of $\pi$;
 \item[(ii)] for any $s \in S$, the intersection $Z \cap \pi^{-1}(s)$ is 0-dimensional if nonempty.
 \end{itemize}
\end{lemma}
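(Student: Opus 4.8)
The plan is to study the restriction $\pi|_Z : Z \to S$ and to split into cases according to the dimension of its image. First I would observe that, since $Z$ is a closed subscheme of $X$ and $\pi$ is proper, the restriction $\pi|_Z$ is proper as well, so the image $W := \pi(Z)$ is a closed subset of $S$; moreover $W$ is irreducible, being the continuous image of the irreducible space $Z$. The fundamental inequality $\dim W \leq \dim Z = 1$ then leaves only two possibilities for $\dim W$, namely $0$ or $1$, and I claim these correspond exactly to the two alternatives (i) and (ii) in the statement.

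If $\dim W = 0$, then $W$ is an irreducible closed subset of dimension $0$, hence a single closed point $s_0 \in S$. Consequently $Z \subseteq \pi^{-1}(s_0)$, which is precisely alternative (i). So the content of the lemma lies in the case $\dim W = 1$, where I would aim to establish alternative (ii). Here $\pi|_Z : Z \to W$ is a morphism of irreducible varieties of the same dimension $1$ whose image is all of $W$ (closed and dense). For any $s \in S$ with $Z \cap \pi^{-1}(s) \neq \emptyset$, this intersection is exactly the scheme-theoretic fiber $(\pi|_Z)^{-1}(s)$, which is a nonempty closed subscheme of $Z$; I want to show it has dimension $0$.

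The key step is a dimension argument relying on the irreducibility of $Z$. Suppose, for contradiction, that for some $s$ the fiber $(\pi|_Z)^{-1}(s)$ had dimension $\geq 1$. Since $Z$ is irreducible of dimension $1$, its only closed subset of dimension $\geq 1$ is $Z$ itself (every \emph{proper} closed subset of an irreducible $1$-dimensional space has dimension $\leq 0$). Hence $(\pi|_Z)^{-1}(s) = Z$, forcing $\pi(Z) = \{s\}$ and therefore $\dim W = 0$, contrary to our assumption. This shows every nonempty fiber is $0$-dimensional, giving (ii); for $s \notin W$ the intersection is empty and the condition holds vacuously.

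I do not anticipate a serious obstacle, as the argument is essentially topological. The only points requiring genuine care are the reduction to the image $W$ (where properness is used to guarantee that $W$ is closed, so that a $0$-dimensional irreducible image really is a single point) and the irreducibility input in the final step, namely that a closed subset of the irreducible $1$-dimensional scheme $Z$ of dimension $\geq 1$ must equal $Z$. Everything else is bookkeeping about topological dimension of fibers, and I would keep the treatment at the level of underlying reduced spaces since only the dimension of the intersection is at issue.
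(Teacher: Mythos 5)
Your proof is correct, and it reaches the conclusion by the same underlying mechanism as the paper --- the observation that a closed subset of dimension $\geq 1$ in the irreducible $1$-dimensional $Z$ must be all of $Z$, which collapses $\pi(Z)$ to a point --- but the organization is genuinely different. The paper introduces the auxiliary locus $D := \{ s \in S : Z \cap \pi^{-1}(s) \text{ is 1-dimensional}\}$, dichotomizes on whether $D$ is empty, and invokes upper semicontinuity of fiber dimension to see that $\pi^{-1}(D)$ is closed, together with a dimension count showing $\dim D = 0$, before applying irreducibility to $\pi^{-1}(D) \cap Z$. You instead dichotomize on $\dim \pi(Z) \in \{0,1\}$ (using properness so that $W = \pi(Z)$ is closed and irreducible) and apply the irreducibility argument to a single offending fiber, which lets you dispense with semicontinuity and the locus $D$ altogether. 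Your route is thus slightly more elementary and self-contained; what the paper's bookkeeping buys in exchange is the explicit extra information that the bad locus $D$ consists of a single point when nonempty, which your case analysis recovers implicitly (in case (i) the image is one point) but does not isolate as a statement. Your closing cautions --- properness for closedness of the image, and the strict dimension drop for proper irreducible closed subsets of a variety --- are exactly the right points to flag, and both hold in the finite-type setting of the lemma.
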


\begin{proof}
Consider the locus
\[
  D := \{ s \in S : Z \cap \pi^{-1}(s) \text{ is 1-dimensional}\}.
\]
If $D$ is empty, then $Z$ is of type (ii).  Therefore, let us suppose $D$ is nonempty.  Then $\pi^{-1}(D)$ is a closed subset of $X$ by semicontinuity.  Note that, the dimension of $D$ must be exactly zero, or else $Z$ would have dimension at least 2, a contradiction.  That is, $D$ is a finite number of closed points.  Now, the intersection $\pi^{-1}(D) \cap Z$ is a 1-dimensional closed subset of $Z$.  By the irreducibility of $Z$, we have $\pi^{-1}(D) \cap Z = Z$, and $D$ consists of a single point, i.e.\ $Z$ is of type (i).
\end{proof}

\subsection{More torsion classes}\label{section-moretc}

In this section, we introduce a few more torsion classes in $\Coh (X)$ that depend on the geometry of the fibration.  Suppose $\pi : X \to S$ and $\hat{\pi}: Y \to S$ are a pair of dual elliptic fibrations.  We define
\begin{align}
  W_{0,X}' &:= \{ E \in \Coh (X) : E|_s \text{ is $\Psi_s$-WIT$_0$ for a general closed point $s \in S$}\}, \notag\\
   W_{1,X}' &:= \{ E \in \Coh (X) : E|_s \text{ is $\Psi_s$-WIT$_1$ for a general closed point $s \in S$}\}, \notag\\
   \mathfrak T_X &:= \langle W_{0,X}', (\Phi ( \{\Coh^{\leq 0} (Y_s)\}^\uparrow))\rangle. \label{eq23}
\end{align}
Note that, by Lemma \ref{lemma15}, for any closed point $s \in S$ and any coherent sheaf $F$ on $X_s$, the functor $\Psi_s$ takes $F$ to a complex of length at most 2, sitting at degrees 0 and 1.  Also,  we have defined $W_{0,X}', W_{1,X}'$ so that they contain sheaves that restrict to zero on a general fiber of $\pi$.  That is, we have $\mathcal B_X \subseteq W_{i,X}'$ for $i=0,1$.  In addition, we  have $\mathcal T_X \subseteq W_{0,X}'$, i.e.\ $W_{0,X}'$ contains all the torsion sheaves on $X$ - this is because for a torsion sheaf $T$ on $X$ and a general closed point $s \in S$, the restriction $T|_s$ must be 0-dimensional, which is  $\Psi_s$-WIT$_0$.

We can similarly define $W_{0,Y}', W_{1,Y}'$ and $\mathfrak T_Y$, with $X$ replaced with $Y$ and $\Psi$ replaced with $\Phi$.

\begin{lemma}\label{lemma47}
We have
 \begin{itemize}
 \item[(i)] $W_{0,X} \subseteq W_{0,X}' \subseteq \mathfrak T_X$;
 \item[(ii)] $\mathfrak T_X^\circ \subseteq (W_{0,X})^\circ = W_{1,X} \subseteq W_{1,X}'$.
\end{itemize}
\end{lemma}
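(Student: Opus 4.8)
The plan is to dispatch the four assertions separately, isolating the one piece of genuine content, namely the equality $(W_{0,X})^\circ = W_{1,X}$ in part (ii). Starting with part (i): the inclusion $W_{0,X}' \subseteq \mathfrak{T}_X$ is immediate, since by its definition \eqref{eq23} the category $\mathfrak{T}_X$ is the extension closure of $W_{0,X}'$ together with $\Phi(\{\Coh^{\leq 0}(Y_s)\}^\uparrow)$, and an extension closure contains each generating subcategory. For $W_{0,X} \subseteq W_{0,X}'$ I would invoke Lemma \ref{lemma20}: if $E$ is $\Psi$-WIT$_0$ then $E|_s$ is $\Psi_s$-WIT$_0$ for \emph{every} closed point $s$, hence in particular for a general one, so $E \in W_{0,X}'$.

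Two of the three statements in part (ii) are then formal or follow from base change. The inclusion $\mathfrak{T}_X^\circ \subseteq (W_{0,X})^\circ$ is purely formal: part (i) gives $W_{0,X} \subseteq \mathfrak{T}_X$, and the operation $\mathcal{C} \mapsto \mathcal{C}^\circ$ is inclusion-reversing straight from its definition (enlarging $\mathcal C$ only adds constraints). For $W_{1,X} \subseteq W_{1,X}'$ I would use the base-change isomorphism \eqref{eq57}: writing $\Psi(E) \cong \wh{E}[-1]$ for a $\Psi$-WIT$_1$ sheaf $E$, generic flatness of both $E$ and $\wh{E}$ over $S$ ensures that for a general $s$ the derived restrictions $L\iota_s^\ast E = E|_s$ and $Lj_s^\ast \wh{E} = \wh{E}|_s$ are honest sheaves, so that $\Psi_s(E|_s) \cong Lj_s^\ast \Psi(E) \cong \wh{E}|_s[-1]$ is a sheaf in degree $1$; thus $E|_s$ is $\Psi_s$-WIT$_1$.

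The core is $(W_{0,X})^\circ = W_{1,X}$, which I would obtain by exhibiting $(W_{0,X}, W_{1,X})$ as a torsion pair and then quoting the standard fact that the torsion-free class of a torsion pair equals the $(-)^\circ$ of its torsion class. The inclusion $W_{1,X} \subseteq (W_{0,X})^\circ$ is the easy half: since $\Psi$ is an equivalence, for $C \in W_{0,X}$ and $E \in W_{1,X}$ one has $\Hom_{\Coh(X)}(C,E) \cong \Hom_{D(Y)}(\wh{C}, \wh{E}[-1]) = \Ext^{-1}_Y(\wh{C},\wh{E}) = 0$. For the reverse inclusion I would produce, for an arbitrary sheaf $E$, a canonical short exact sequence $0 \to A \to E \to B \to 0$ with $A \in W_{0,X}$ and $B \in W_{1,X}$: apply $\Phi$ to the truncation triangle $\Psi^0(E) \to \Psi(E) \to \Psi^1(E)[-1] \to$ and use $\Phi\Psi \cong \mathrm{id}[-1]$. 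Because $\pi$ has relative dimension $1$ and the kernels are sheaves, both $\Psi$ and $\Phi$ send sheaves to complexes concentrated in degrees $0,1$ (as noted after Lemma \ref{lemma20}); reading off the long exact cohomology sequence then forces $\Phi^0(\Psi^0 E) = 0 = \Phi^1(\Psi^1 E)$ and yields the displayed sequence with $A := \Phi^1(\Psi^0 E)$ and $B := \Phi^0(\Psi^1 E)$. A second application of $\Psi$, using $\Psi\Phi \cong \mathrm{id}[-1]$, identifies $A$ as $\Psi$-WIT$_0$ and $B$ as $\Psi$-WIT$_1$. Granting this, if $E \in (W_{0,X})^\circ$ then the monomorphism $A \hookrightarrow E$ is a map out of $W_{0,X}$, hence zero, so $A = 0$ and $E \cong B \in W_{1,X}$.

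The main obstacle is the bookkeeping in this last step: tracking the cohomology degrees through the truncation triangle and the two applications of $\Phi$ and $\Psi$, and verifying the vanishings $\Phi^0(\Psi^0 E) = \Phi^1(\Psi^1 E) = 0$ that collapse the connecting maps in the long exact sequence into the desired short exact sequence. Once the degree-$[0,1]$ concentration is secured everything else is routine; if one prefers, this torsion-pair structure can be cited from the Fourier--Mukai literature (e.g.\ \cite{FMNT}) rather than rederived.
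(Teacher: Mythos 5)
Your proof is correct and takes essentially the same route as the paper's: part (i) via Lemma \ref{lemma20} and the definition of $\mathfrak T_X$, the equality $(W_{0,X})^\circ = W_{1,X}$ via the torsion pair $(W_{0,X},W_{1,X})$, and $W_{1,X} \subseteq W_{1,X}'$ via generic flatness combined with the base change \eqref{eq57}. The only difference is one of packaging: where the paper simply cites \cite[Lemma 9.2]{BMef} for the torsion pair and \cite[Proposition 6.1, Corollary 6.3]{FMNT} for the fiberwise statements, you rederive these facts by hand (correctly, including the vanishings $\Phi^0(\Psi^0 E)=\Phi^1(\Psi^1 E)=0$ from the truncation triangle and $\Phi\Psi \cong \mathrm{id}[-1]$).
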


\begin{proof}
Part (i) follows immediately  from Lemma \ref{lemma20} and the definition of $\mathfrak T_X$.

In part (ii), the first inclusion follows from part (i), while the second inclusion follows from $(W_{0,X}, W_{1,X})$ being a torsion pair in $\Coh (X)$ (see, for instance, \cite[Lemma 9.2]{BMef}).  To show the last inclusion of part (ii), take any $E \in W_{1,X}$.  By generic flatness, there exists a dense open subscheme $U \subseteq S$ such that  both $E|_U$ and $(\wh{E})|_U$ are flat.  By base change \cite[Proposition 6.1]{FMNT}, the restriction $E|_U$ is $\Psi_U$-WIT$_1$.  Then by \cite[Corollary 6.3(iii)]{FMNT}, we have that $E|_s$ is $\Psi_s$-WIT$_1$ for every closed point $s \in U$, i.e.\ $E \in W_{1,X}'$.
\end{proof}

\begin{remark}\label{remark18}
Since $(W_{0,X}, W_{1,X})$ is a torsion pair in $\Coh(X)$, every coherent sheaf $E$ on $X$ fits in a short exact sequence in $\Coh (X)$
\begin{equation}\label{eq9}
0 \to E_0 \to E \to E_1 \to 0
\end{equation}
where $E_0 \in W_{0,X}$ and $E_1 \in W_{1,X}$.  By Lemma \ref{lemma47}, we can also regard $E_0, E_1$ as objects in $W_{0,X}', W_{1,X}'$, respectively.
\end{remark}

\begin{lemma}\label{lemma29}
 For any $E \in \Coh (X)$, let $E_0, E_1$ be as in \eqref{eq9}.  Then:
\begin{itemize}
\item[(i)] If $E \in W_{i,X}'$ (where $i=0$ or $1$), then $E_{1-i} \in \mathcal B_X$.
\item[(ii)] $W_{1,X}' \cap \mathcal B_X^\circ \subseteq W_{1,X}$.
\end{itemize}
\end{lemma}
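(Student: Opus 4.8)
The plan is to establish (i) by restricting the torsion-pair sequence \eqref{eq9} to a general fiber and running the long exact sequence of $\Psi_s$-cohomology, and then to obtain (ii) as a short formal consequence of (i) together with the orthogonality defining $\mathcal{B}_X^\circ$.

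For (i), I would first record the ambient facts about the fibers. Since $\pi$ has relative dimension $1$ and the kernel of $\Psi$ is a sheaf, $\Psi$ carries $\Coh(X)$ into complexes concentrated in degrees $0$ and $1$; by Lemma \ref{lemma15} the same holds for each base change $\Psi_s$, which is moreover an equivalence by Lemma \ref{lemma14}. Applying generic flatness to $E_1$ produces a dense open $U \subseteq S$ over which the restriction of \eqref{eq9} stays short exact,
\[
0 \to E_0|_s \to E|_s \to E_1|_s \to 0 \qquad (s \in U),
\]
and I would shrink $U$ further so that, for $s \in U$, the sheaf $E_1|_s$ is $\Psi_s$-WIT$_1$ (possible since $E_1 \in W_{1,X} \subseteq W_{1,X}'$ by Lemma \ref{lemma47}(ii)); note that $E_0|_s$ is $\Psi_s$-WIT$_0$ for \emph{every} $s$ by Lemma \ref{lemma20}. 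Applying $\Psi_s$ to the restricted sequence then gives the six-term exact sequence
\[
0 \to \Psi_s^0(E_0|_s) \to \Psi_s^0(E|_s) \to \Psi_s^0(E_1|_s) \to \Psi_s^1(E_0|_s) \to \Psi_s^1(E|_s) \to \Psi_s^1(E_1|_s) \to 0.
\]

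The two cases of (i) are then symmetric. If $E \in W_{0,X}'$, then $\Psi_s^1(E|_s)=0$ for general $s$; together with $\Psi_s^1(E_0|_s)=0$ and $\Psi_s^0(E_1|_s)=0$, the sequence forces $\Psi_s^1(E_1|_s)=0$, so $\Psi_s(E_1|_s)=0$ and hence $E_1|_s=0$ for general $s$ because $\Psi_s$ is an equivalence; that is, $E_1 \in \mathcal{B}_X$. If instead $E \in W_{1,X}'$, then $\Psi_s^0(E|_s)=0$, which with $\Psi_s^0(E_1|_s)=0$ and $\Psi_s^1(E_0|_s)=0$ forces $\Psi_s^0(E_0|_s)=0$, giving $E_0|_s=0$ for general $s$, i.e.\ $E_0 \in \mathcal{B}_X$.

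For (ii), given $E \in W_{1,X}' \cap \mathcal{B}_X^\circ$, part (i) (the case $i=1$) yields $E_0 \in \mathcal{B}_X$. The monomorphism $E_0 \hookrightarrow E$ from \eqref{eq9} is then a morphism out of $\mathcal{B}_X$ into an object of $\mathcal{B}_X^\circ$, hence zero, so $E_0=0$ and $E \cong E_1 \in W_{1,X}$. The only delicate point in the whole argument is the alignment of genericities in (i): I must intersect the open locus where \eqref{eq9} restricts exactly (generic flatness of $E_1$), the locus where $E_1|_s$ is $\Psi_s$-WIT$_1$, and the locus coming from the hypothesis $E \in W_{i,X}'$, so that all the relevant WIT conditions hold on one common dense open. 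Everything downstream of that is formal bookkeeping with the long exact sequence and the equivalence $\Psi_s$.
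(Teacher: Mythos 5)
Your proof is correct and takes essentially the same route as the paper's: restrict the defining sequence \eqref{eq9} to a general fiber, use Lemma \ref{lemma20} to get that $E_0|_s$ is $\Psi_s$-WIT$_0$ for \emph{every} $s$, combine with the generic WIT condition from the hypothesis (and, for $E_1$, from Lemma \ref{lemma47}(ii)), and conclude fiberwise vanishing via the equivalence $\Psi_s$, with part (ii) deduced from part (i) exactly as in the paper. The only cosmetic difference is in the bookkeeping for (i): the paper handles the case $i=0$ by noting $E_1|_s$ is a quotient of the WIT$_0$ sheaf $E|_s$ and invoking the base change \eqref{eq7}, and the case $i=1$ by the vanishing of maps from $\Psi_s$-WIT$_0$ to $\Psi_s$-WIT$_1$ sheaves, whereas you secure short-exactness of the restricted sequence by generic flatness and run the six-term $\Psi_s$-cohomology sequence --- an equivalent argument with the genericity alignment you correctly flag as the only delicate point.
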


%[The $E \in W_{1,X}$ case of this  lemma is a direct cosequence of Corollary \ref{coro2}(ii), but the way I stated it there was not as useful - maybe rewrite those lemmas.]

\begin{proof}
For part (i), consider the short exact sequence \eqref{eq9}:
\begin{equation}\label{eq14}
0 \to E_0 \overset{\al}{\to} E \to E_1 \to 0.
\end{equation}
For any closed point $s \in S$,  we have the exact sequence
\begin{equation}\label{eq58}
E_0|_s \overset{\al_s}{\to} E|_s \to E_1|_s \to 0.
\end{equation}

To begin with, suppose $E \in W_{0,X}'$.  Then for a general $s$, the restriction $E_1|_s$ is $\Psi_s$-WIT$_0$.  On the other hand, the base change \eqref{eq7} gives  $(\wh{E_1})|_s \cong \Psi_s^1 (E_1|_s)$.  Thus $(\wh{E_1})|_s=0$ for a general $s \in S$, i.e.\  $\wh{E_1} \in \mathcal B_X$.

Next, suppose $E \in W_{1,X}'$.  By Lemma \ref{lemma20},  the restriction $E_0|_s$ is  $\Psi_s$-WIT$_0$ for every closed point $s \in S$.  However, $E|_s$ is $\Psi_s$-WIT$_1$ for a general closed point $s \in S$ by assumption.  Therefore,   the map $\al |_s$ in \eqref{eq58} must be zero for a general closed point $s \in S$.  In other words, the injection $\al$ in $\Coh (X)$ vanishes when we restrict to a general fiber over $S$.  Hence $E_0 \in \mathcal B_X$, and the lemma is proved.

For part (ii), take any $E \in W_{1,X}' \cap \mathcal B_X^\circ$.  Let $E_0, E_1$ be as in \eqref{eq9}.  From part (i), we know that $E_0 \in \mathcal B_X$, and hence must vanish, implying $E \in W_{1,X}$.
\end{proof}

\begin{lemma}\label{lemma31}
We have
\begin{itemize}
\item[(i)] The category $W_{0,X}'$ is closed under quotients and extensions.
\item[(ii)] $W_{1,X}' \cap \mathcal F_X = ( W_{0,X}')^\circ$.
\item[(iii)] The category $\mathfrak T_X$ is closed under quotients and extensions in $\Coh (X)$.
\end{itemize}
\end{lemma}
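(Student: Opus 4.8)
\emph{Setup and part (i).} The plan is to reduce everything to the fiberwise torsion pair $(W_{0,X_s}, W_{1,X_s})$ in $\Coh(X_s)$: by Lemma \ref{lemma14} the base change $\Psi_s$ is an equivalence, and by Lemma \ref{lemma15} it sends coherent sheaves to complexes concentrated in degrees $0$ and $1$, so $W_{0,X_s}$ is a torsion class (closed under quotients and extensions) and $W_{1,X_s}$ is the corresponding torsion-free class (closed under subobjects). For part (i) I would test the defining condition on a general fiber. For a quotient $B \twoheadrightarrow C$ with $B \in W_{0,X}'$, restriction to $X_s$ stays surjective, so for general $s$ the sheaf $C|_s$ is a quotient of the $\Psi_s$-WIT$_0$ sheaf $B|_s$, hence $\Psi_s$-WIT$_0$, giving $C \in W_{0,X}'$. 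For an extension $0 \to A \to B \to C \to 0$ with $A, C \in W_{0,X}'$, restriction is only right exact, so I would pass to $M := \image(A|_s \to B|_s)$: it is a quotient of $A|_s$ and hence $\Psi_s$-WIT$_0$, and $0 \to M \to B|_s \to C|_s \to 0$ is exact, exhibiting $B|_s$ as an extension of two $\Psi_s$-WIT$_0$ sheaves for general $s$; thus $B \in W_{0,X}'$.

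\emph{Part (ii).} I would prove the two inclusions separately. The inclusion $(W_{0,X}')^\circ \subseteq W_{1,X}' \cap \mathcal F_X$ is formal: since $\mathcal T_X \subseteq W_{0,X}'$, an object of $(W_{0,X}')^\circ$ admits no nonzero map from a torsion sheaf, so its torsion subsheaf vanishes and it is torsion-free; and since $W_{0,X} \subseteq W_{0,X}'$, Lemma \ref{lemma47} gives $(W_{0,X}')^\circ \subseteq (W_{0,X})^\circ = W_{1,X} \subseteq W_{1,X}'$. For the reverse inclusion, take $E \in W_{1,X}' \cap \mathcal F_X$, an arbitrary $C \in W_{0,X}'$, and a morphism $f : C \to E$ with image $I := \image f$. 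Then $I \in W_{0,X}'$ by part (i) and $I \in \mathcal F_X$ as a subsheaf of $E$. The key point is that, by generic flatness of $E/I$, the derived restriction $(E/I)|_s^L$ is concentrated in degree $0$ for general $s$, so the $\mathrm{Tor}_1$ term vanishes and $I|_s \hookrightarrow E|_s$ is injective there. Since $I|_s$ is $\Psi_s$-WIT$_0$ and $E|_s$ is $\Psi_s$-WIT$_1$, this inclusion is a morphism in the fiber torsion pair and so vanishes, forcing $I|_s = 0$ for general $s$, i.e.\ $I \in \mathcal B_X$. A torsion-free sheaf lying in $\mathcal B_X$ is zero (if nonzero its support would be all of $X$ and it would not vanish on a general fiber), so $f = 0$ and $E \in (W_{0,X}')^\circ$.

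\emph{Part (iii).} Since $\mathfrak T_X$ is by definition an extension closure it is already closed under extensions, so only closure under quotients remains. Set $\mathcal G := \Phi(\{\Coh^{\leq 0}(Y_s)\}^\uparrow)$. Every $E \in \mathfrak T_X$ admits a finite filtration with successive quotients in $W_{0,X}' \cup \mathcal G$, and a surjection $E \twoheadrightarrow Q$ induces a filtration of $Q$ whose successive quotients are quotients of those of $E$; as $\mathfrak T_X$ is extension-closed, it suffices to check that each such quotient lies in $\mathfrak T_X$. Quotients of $W_{0,X}'$-objects stay in $W_{0,X}' \subseteq \mathfrak T_X$ by part (i). For a quotient $G'$ of some $G = \Phi(F) \in \mathcal G$---so $G \in W_{1,X}$ and $\widehat{G} = F \in \{\Coh^{\leq 0}(Y_s)\}^\uparrow$ by Remark \ref{remark21}---I would take the $(W_{0,X}, W_{1,X})$-decomposition $0 \to G'_0 \to G' \to G'_1 \to 0$ of Remark \ref{remark18}. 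Here $G'_0 \in W_{0,X} \subseteq \mathfrak T_X$, while $G'_1$ is a $\Psi$-WIT$_1$ quotient of $G$; applying $\Psi$ to $G \twoheadrightarrow G'_1$ and reading off the long exact sequence of $\Psi$-cohomology shows $\widehat{G'_1}$ is a quotient of $\widehat{G} = F$, hence lies in the torsion class $\{\Coh^{\leq 0}(Y_s)\}^\uparrow$ (Lemma \ref{lemma17}). Consequently $G'_1 = \Phi(\widehat{G'_1}) \in \mathcal G$, so $G'$ is an extension of two objects of $\mathfrak T_X$ and lies in $\mathfrak T_X$.

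\emph{Main obstacle.} The routine part is the bookkeeping with general $s$, which I would make precise by intersecting the finitely many dense open loci on which the relevant WIT$_i$ and flatness conditions hold. The two steps carrying real content are the fiberwise injectivity in part (ii), where generic flatness of the quotient is what kills the obstructing $\mathrm{Tor}_1$, and---what I expect to be the main obstacle---the quotient-closure in part (iii): the crux is that the $W_{1,X}$-part of a quotient of $\Phi(F)$ remains of the form $\Phi(G)$ with $G \in \{\Coh^{\leq 0}(Y_s)\}^\uparrow$, which I extract by transporting the quotient through $\Psi$ and using that $\{\Coh^{\leq 0}(Y_s)\}^\uparrow$ is closed under quotients.
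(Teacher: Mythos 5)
Your proof is correct, and on parts (i) and (ii) it follows the paper's line: the paper declares (i) clear, and your image trick for extensions plus the explicit generic-flatness/$\mathrm{Tor}_1$ step in (ii) are exactly the details behind the paper's terse assertion that $\al|_s=0$ for general $s$ forces the torsion-free source to be torsion (the paper reduces to $\al$ injective using quotient-closure of $W_{0,X}'$, then concludes; your version via the image $I$ and flatness of $E/I$ makes the fiberwise injectivity honest). In part (iii) you take a mildly but genuinely different decomposition. The paper reduces a quotient $F'$ of an object of $\Phi(\{\Coh^{\leq 0}(Y_s)\}^\uparrow)$ to the case $F' \in (W_{0,X}')^\circ$ using the torsion pair furnished by part (i), then invokes part (ii) and Lemma \ref{lemma29}(ii) to conclude $F' \in W_{1,X}$ before transporting through $\Psi$; you instead split the quotient by the $(W_{0,X},W_{1,X})$ torsion pair of Remark \ref{remark18}, absorb the WIT$_0$ piece into $W_{0,X} \subseteq W_{0,X}' \subseteq \mathfrak T_X$ via Lemma \ref{lemma47}(i), and apply the transform only to the WIT$_1$ piece. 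Both routes hinge on the identical key step — the kernel of a surjection of $\Psi$-WIT$_1$ sheaves is again WIT$_1$, so $\Psi$ converts the surjection into a surjection of transforms, and quotient-closure of $\{\Coh^{\leq 0}(Y_s)\}^\uparrow$ (Lemma \ref{lemma17}) finishes — but your version makes part (iii) logically independent of part (ii) and of Lemma \ref{lemma29}(ii), at the cost of one extra extension step, whereas the paper's reduction records the finer fact that the residual quotient is torsion-free and lands in $\Phi(\{\Coh^{\leq 0}(Y_s)\}^\uparrow)$ on the nose. Your explicit filtration bookkeeping, reducing quotient-closure of the extension closure to quotient-closure of the two generating classes, is the unstated reduction behind the opening sentence of the paper's proof of (iii), so no gap there either.
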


\begin{proof}
Part (i) is clear.  For part (ii), let us first show that $W_{1,X}' \cap \mathcal F_X \subseteq (W_{0,X}')^\circ$.  Take any morphism $\al : F \to E$ in $\Coh (X)$ where $E \in W_{1,X}' \cap \mathcal F_X, F \in W_{0,X}'$.   We want to show that $\al$ is the zero map.  Since $W_{0,X}'$ is closed under quotients by part (i), we can assume that $\al$ is an injection in $\Coh (X)$.  Since $E$ is torsion-free, we can also assume that $F$ is torsion-free and nonzero.  Then, for a general closed point $s \in S$, $F|_s$ is $\Psi_s$-WIT$_0$ while $E|_s$ is $\Psi_s$-WIT$_1$, implying $\al|_s=0$ for a general $s$.  Thus $F$ must be torsion, a contradiction.  Hence $\al$ must be zero.

For the other inclusion in (ii), take any $E \in (W_{0,X}')^\circ$ that is nonzero. By Remark \ref{remark18}, we have $E \in W_{1,X}'$.  Since $W_{0,X}'$ contains all the torsion sheaves on $X$, we also know $E$ is torsion-free.  Thus (ii) is proved.

Given part (i), in order to show part (iii), it is enough to show that any quotient of an object $F$ in $\Phi ( \{\Coh^{\leq 0} (Y_s)\}^\uparrow)$ lies in $\mathfrak T_X$.  Consider any surjection $F \overset{\alpha}{\twoheadrightarrow} F'$  in $\Coh (X)$.  We can assume that $F'$  lies in $(W_{0,X}')^\circ$.  Therefore, from part (ii), we know $F'$ is torsion-free and lies in $W_{1,X}'$. Now, consider the short exact sequence in $\Coh (X)$
\[
0 \to K \to F \overset{\alpha}{\to} F' \to 0
\]
where $K = \kernel (\alpha)$.  From Lemma \ref{lemma29}(ii), we also know $F' \in W_{1,X}$.  On the other hand, that $F \in W_{1,X}$ implies $K \in W_{1,X}$.  The short exact sequence above is therefore taken by $\Psi$  to the short exact sequence in $\Coh (Y)$
\[
0 \to \wh{K} \to \wh{F} \to \wh{F'} \to 0.
\]
Since $\wh{F} \in \{ \Coh^{\leq 0}(Y_s)\}^\uparrow$, the same holds for $\wh{F'}$, and so  $F' \in \Phi (\{\Coh^{\leq 0}(Y_s)\}^\uparrow) \subseteq \mathfrak T_X$.    Thus (iii) holds. %[\textbf{However}, in the proof of part (ii), the argument there relies on: (1) the known results that being semistable corresponds to being WIT$_i$ on a smooth elliptic curve; (2) completeness of the qout scheme.  Maybe I can try to avoid the use of (1) by proving directly some type of relative filtration where the subfactors are WIT sheaves, by mimicking the proof of the relative HN filtration? ]
\end{proof}

By Lemma \ref{lemma31}(iii), we now have a torsion pair $(\mathfrak T_X, \mathfrak T_X^\circ)$ in $\Coh (X)$.  We set
\[
   \mathfrak C_X := \langle \mathfrak T_X^\circ [1], \mathfrak T_X \rangle.
\]

For any subcategory $\mathcal C$ of $\Coh (X)$, where $X$ is a smooth projective variety, we will define $\mathcal C^D$ to be the subcategory of $\Coh (X)$ consistant of all sheaves of the form $\EExt^c_X (F,\OO_X)$, where $F \in \mathcal C$ and $c$ is the codimension of the support of $F$.

Now we define, for an elliptic fibration $\wh{\pi}$ of any dimension $n$ and when $Y$ is smooth,

\begin{equation}\label{eq64}
  \mathcal B_{Y,\ast} := \langle \Coh^{\leq 0}(Y), (W_{1,Y}\cap \Coh^{=1}(Y) )^D \rangle.
\end{equation}

\begin{remark}
In \cite[Definition 1.1.7]{HL}, for a coherent sheaf $E$ of codimension $c$ on a smooth projective variety $X$, the notation $E^D$ denotes the sheaf $\EExt^c_{X}(E,\omega_X)$ where $\omega_X$ is the canonical sheaf on $X$.
\end{remark}

\begin{lemma}\label{lemma30}
The category $\mathcal B_{Y,\ast}$ is closed under quotients and extensions in $\Coh (Y)$.
\end{lemma}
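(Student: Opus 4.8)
The plan is to separate the two assertions. Closure under extensions is immediate: by Notation, $\mathcal B_{Y,\ast}$ is an extension closure and hence closed under extensions; concretely, if $0\to A\to B\to C\to 0$ is exact in $\Coh(Y)$ with $A,C\in\mathcal B_{Y,\ast}$, the associated triangle in $D(Y)$ exhibits $B$ as an extension of $C$ by $A$, so $B\in\mathcal B_{Y,\ast}$. Thus the whole content is closure under quotients. Throughout I write $\mathcal G:=(W_{1,Y}\cap\Coh^{=1}(Y))^D$, and I will use that (since all generators lie in the heart $\Coh(Y)$) the objects of $\mathcal B_{Y,\ast}$ are exactly the sheaves admitting a finite filtration in $\Coh(Y)$ whose factors lie in $\Coh^{\leq 0}(Y)\cup\mathcal G$. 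I will also freely use the standard properties of the duality $(-)^D=\EExt^{n-1}(-,\OO_Y)$ on pure $1$-dimensional sheaves from \cite[Section 1.1]{HL} (transported from $\omega_Y$ to $\OO_Y$ by the exact twist $-\otimes\omega_Y^{-1}$, which changes nothing relevant): for $P\in\Coh^{=1}(Y)$ one has $\EExt^q(P,\OO_Y)=0$ for $q<n-1$, the sheaf $P^D$ is again pure $1$-dimensional, and $P^{DD}\cong P$.

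The key step I would establish is the claim that every quotient $Q$ of an object $G\in\mathcal G$ lies in $\mathcal B_{Y,\ast}$. Write $G=F^D$ with $F\in W_{1,Y}\cap\Coh^{=1}(Y)$; then $G$ is pure $1$-dimensional and $Q$ is supported in dimension $\leq 1$. Let $T_0(Q)$ be the maximal $0$-dimensional subsheaf and $\bar Q:=Q/T_0(Q)$ the (pure $1$-dimensional, or zero) quotient, so that $T_0(Q)\in\Coh^{\leq 0}(Y)$. If $\bar Q=0$ then $Q\in\Coh^{\leq 0}(Y)\subseteq\mathcal B_{Y,\ast}$; otherwise consider the composite surjection $G\twoheadrightarrow Q\twoheadrightarrow\bar Q$ with kernel $K'$, a subsheaf of the pure sheaf $G$, hence of dimension $\leq 1$. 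Applying $\EExt^\bullet(-,\OO_Y)$ to $0\to K'\to G\to\bar Q\to 0$ and using $\EExt^{n-2}(K',\OO_Y)=0$ (as $K'$ has codimension $\geq n-1$) produces an injection $\bar Q^D=\EExt^{n-1}(\bar Q,\OO_Y)\hookrightarrow\EExt^{n-1}(G,\OO_Y)=G^D\cong F^{DD}\cong F$. Since $(W_{0,Y},W_{1,Y})$ is a torsion pair, its torsion-free class $W_{1,Y}$ is closed under subobjects, so $\bar Q^D\in W_{1,Y}$; as $\bar Q^D$ is also pure $1$-dimensional, $\bar Q^D\in W_{1,Y}\cap\Coh^{=1}(Y)$, whence $\bar Q\cong\bar Q^{DD}\in\mathcal G$ by reflexivity. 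Then $0\to T_0(Q)\to Q\to\bar Q\to 0$ exhibits $Q$ as an extension of $\bar Q\in\mathcal G$ by $T_0(Q)\in\Coh^{\leq 0}(Y)$, so $Q\in\mathcal B_{Y,\ast}$.

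Granting this claim, closure under quotients follows by induction on the length $\ell$ of a filtration of $E\in\mathcal B_{Y,\ast}$ with factors in $\Coh^{\leq 0}(Y)\cup\mathcal G$. For $\ell=1$ the assertion is the claim together with the fact that $\Coh^{\leq 0}(Y)$ is a Serre subcategory, hence closed under quotients. For the inductive step, given a surjection $q:E\twoheadrightarrow Q$, I choose a subobject $E'\subseteq E$ of filtration length $\ell-1$ with $A:=E/E'\in\Coh^{\leq 0}(Y)\cup\mathcal G$; then $q$ restricts to a surjection $E'\twoheadrightarrow Q':=q(E')$ and induces a surjection $A\twoheadrightarrow Q/Q'$. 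By the inductive hypothesis $Q'\in\mathcal B_{Y,\ast}$, and by the base case $Q/Q'\in\mathcal B_{Y,\ast}$; since $\mathcal B_{Y,\ast}$ is closed under extensions, the sequence $0\to Q'\to Q\to Q/Q'\to 0$ gives $Q\in\mathcal B_{Y,\ast}$.

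The main obstacle is the key claim of the second paragraph, specifically controlling the $1$-dimensional part of a quotient of $F^D$. The crux is that dualizing converts the surjection onto the pure part $\bar Q$ into an inclusion $\bar Q^D\hookrightarrow F$, for which one needs only the lower vanishing $\EExt^{n-2}(K',\OO_Y)=0$ rather than full exactness of $(-)^D$; combined with $W_{1,Y}$ being a torsion-free class, and hence closed under subobjects, this forces $\bar Q$ back into $\mathcal G$. Handling the $0$-dimensional torsion $T_0(Q)$ separately and reassembling via an extension is what keeps $Q$ inside $\mathcal B_{Y,\ast}$.
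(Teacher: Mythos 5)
Your proof is correct and follows essentially the same route as the paper: reduce to a pure $1$-dimensional quotient $\bar Q$ of a generator $G=F^D$, dualize the sequence $0\to K'\to G\to\bar Q\to 0$ to obtain an injection $\bar Q^D\hookrightarrow F$ (your $\EExt$ long exact sequence with $\EExt^{n-2}(K',\OO_Y)=0$ is the same computation the paper performs by dualizing the exact triangle), use that $W_{1,Y}$ is a torsion-free class closed under subobjects, and conclude $\bar Q\cong\bar Q^{DD}\in(W_{1,Y}\cap\Coh^{=1}(Y))^D$. The only difference is presentational: you spell out the reduction to pure quotients and the filtration-length induction for the extension closure, both of which the paper leaves implicit.
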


\begin{proof}
Since $\mathcal B_{Y,\ast}$ is defined to be the category generated by $\Coh^{\leq 0}(Y)$ and $(W_{1,Y}\cap \Coh^{=1}(Y))^D$ via extensions, we only need to verify that it is  closed under quotients.

Take any nonzero surjection $\al : E \twoheadrightarrow T$ in $\Coh (Y)$, where $E \in (W_{1,Y} \cap \Coh^{=1}(Y))^D$.  We want to show that $T \in \mathcal B_{Y,\ast}$ as well.  Note that, it suffices to assume $T$ is pure 1-dimensional.  By definition, $E \cong \EExt^{n-1} (F,\OO_Y)$ for some $F \in W_{1,Y}\cap \Coh^{=1}(Y)$.  That  $F$ is  pure 1-dimensional implies $E \cong F^\vee [n-1]$.  Hence we have a short exact sequence in $\Coh (Y)$
\begin{equation}\label{eq16}
0 \to K \to F^\vee [n-1] \overset{\al}{\to} T \to 0
\end{equation}
where $K := \kernel (\al)$.  Assuming that $\al$ is not an isomorphism, we have that $K$ is a pure 1-dimensional sheaf.  That is, all the terms in \eqref{eq16} are pure 1-dimensional sheaves.  Considering \eqref{eq16} as an exact triangle in $D(Y)$, then dualising and shifting, we obtain the exact triangle in $D(Y)$
\begin{equation}\label{eq17}
  T^\vee [n-1] \overset{\al^\vee [n-1]}{\longrightarrow} F \to K^\vee [n-1] \to T^\vee [n]
\end{equation}
where all the terms $T^\vee [n-1], F$ and $K^\vee [n-1]$ are again pure 1-dimensional sheaves.  As a result, we have a short exact sequence in $\Coh (Y)$
\[
0 \to  T^\vee [n-1] \overset{\al^\vee [n-1]}{\longrightarrow} F \to K^\vee [n-1] \to 0.
\]
  Thus $T^\vee [n-1] \in W_{1,Y} \cap \Coh^{=1}(Y)$, and $T \cong \EExt^{n-1}_Y (T',\OO_Y)$ where $T' := T^\vee [n-1]$.  Hence $\mathcal B_{Y,\ast}$ is closed under quotients, and we are done.
\end{proof}

\begin{remark}\label{remark5}
Since $\Coh (Y)$ is a Noetherian abelian category, we  have another torsion pair $(\mathcal B_{Y,\ast}, (\mathcal B_{Y,\ast})^\circ)$ in $\Coh (Y)$ by \cite[Lemma 1.1.3]{Pol}.
\end{remark}
Let us define, for an elliptic fibration $\wh{\pi} : Y \to S$ of any dimension,
\[
  \mathfrak D_Y := \langle \mathcal B_{Y,\ast}^\circ [1], \mathcal B_{Y,\ast}\rangle.
  \]

\begin{remark}\label{remark8}
When $Y$ is  an elliptic surface, the objects of $\mathcal B_Y$ are exactly the fiber sheaves.  Also, since 0-dimensional sheaves on $Y$ are always $\Phi$-WIT$_0$, the objects of $W_{1,Y} \cap \mathcal B_Y$ are exactly the pure 1-dimensional $\Phi$-WIT$_1$ fiber sheaves in this case.  Therefore, we have the following equivalent description of $\mathcal B_{Y,\ast}$ when $Y$ is an elliptic surface:
\begin{equation}\label{eq20}
  \mathcal B_{Y,\ast} = \langle \Coh^{\leq 0}(Y), (W_{1,Y} \cap \mathcal B_Y)^D \rangle.
\end{equation}
\end{remark}

\begin{remark}\label{remark11}
On the other hand, on an elliptic fibration of any dimension, if $E$ is a $\Phi$-WIT$_1$ pure 1-dimensional sheaf, then $E$ cannot have any subsheaf lying in $\{ \Coh^{\leq 0}(Y_s)\}^\uparrow$, which is contained in $W_{0,Y}$.  That is, a $\Phi$-WIT$_1$ pure 1-dimensional sheaf on an elliptic fibration of any dimension is a fiber sheaf.
\end{remark}

\section{Elliptic fibrations}\label{section-equivofhearts}

In this section, we will consider a pair of dual elliptic fibrations $\pi : X \to S$ and $\wh{\pi} : Y \to S$.

We first prove for elliptic surfaces, that the t-structure on $D(X)$ with heart $\mathfrak T_X$ is equivalent to the t-structure on $D(Y)$ with heart $\mathfrak D_Y$ (up to a shift) via a derived equivalence from $D(X)$ to $D(Y)$ - see Theorem \ref{theorem1}.  This  result is a special case of Yoshioka's result  \cite[Proposition 3.3.5]{YosPII}.  We then extend the  above result to the case of elliptic threefolds -  see Theorem \ref{theorem3} and Theorem \ref{theorem2} for the precise statements;  in this case, the two t-structures differ by a tilt (in the sense of \cite[Chap.\ I, Sec.\ 2]{HRS}).  Below, we choose to discuss  elliptic surfaces and elliptic threefolds separately because the two cases are interesting in their own rights.

Our central idea is to filter  coherent sheaves on $X$ or $Y$ using the following torsion classes in $\Coh (X)$ (some of which are Serre subcategories) and their counterparts in $\Coh (Y)$, to the point that we understand the image under the Fourier-Mukai transform $\Psi$ of any subfactor in the filtration:
\begin{equation}\label{eq63}
 W_{0,X}, W_{0,X}', \{\Coh^{\leq 0}(X_s)\}^\uparrow, \mathfrak T_X, \mathcal B_{X,\ast}, \Coh^{\leq d}(X), \Coh (\pi)_{\leq d}
\end{equation}
where $d \geq 0$.

In Yoshioka's work \cite{YosAS}, he considered an elliptic surface $\pi : X \to S$ with a zero section $\sigma$, where all the fibers of $\pi$ are integral.  After identifying a compactification $\wh{\pi} : Y \to S$ of the relative Jacobian with $\pi$ itself and using the Poincar\'{e} sheaf as the kernel, he proceeded to consider a Fourier-Mukai transform $\Psi : D(X) \to D(X)$.  In \cite[Theorem 3.15]{YosAS} and \cite[Remark 3.5]{YosAS}, Yoshioka proved an isomorphism between two moduli spaces of semistable sheaves on $X$ where:
\begin{itemize}
\item one of the two moduli spaces parametrises pure 1-dimensional sheaves (so they have rank zero);
\item the semistability is with respect to $\sigma +kf, k \gg 0$ where $f$ is a fiber class for the fibration $\pi$;
\item the isomorphism is induced by the composite functor $(\Psi (-))^\vee$, i.e.\  the Fourier-Mukai transform $\Psi$ followed by the dualising functor on $D(X)$.
\end{itemize}
Later, in \cite[Proposition 3.4.5]{YosPII}, he generalised the above results to the case of twisted semistable perverse coherent sheaves on dual elliptic surfaces that arise as resolutions of singularities.

Following Yoshioka's idea, we will study the functor that is the composition of the Fourier-Mukai transform from an elliptic fibration to its dual, followed by the dualising functor.  From now on, we will write $\Lambda (-)$ to denote the composite functor $(\Psi (-))^\vee$, i.e.\ the Fourier-Mukai functor $\Psi (-) : D(X) \to D(Y)$ followed by the derived dual $-^\vee : D(Y) \to D(Y)$, irrespective of the dimensions of $X$ and $Y$.  We will also write $\Lambda^i (-)$ to denote $H^i (\Lambda (-))$, where $H^i (-)$ is the degree-$i$ cohomology functor with respect to the standard t-structure on $D(Y)$.

\subsection{t-structures on elliptic surfaces}\label{sec-ellipsurf}

In this section,  we assume that $\pi : X \to S$ and $\wh{\pi} : Y \to S$ are a pair of dual elliptic surfaces.

\begin{lemma}\label{lemma23}
Suppose $X$ is an elliptic surface, and $E \in \mathcal B_X$.  Let $E_0, E_1$ be as in \eqref{eq9}.  Then $\Lambda E \in D^{[0,1]}_{\Coh (Y)}$, and
\begin{itemize}
\item[(i)] $\Lambda^0 E \cong \EExt^1 (\widehat{E_1}, \OO_Y)$;
\item[(ii)] there is a short exact sequence in $\Coh (Y)$
\begin{equation}
0 \to \EExt^2 (\widehat{E_1}, \OO_Y) \to \Lambda^1E \to \EExt^1 (\widehat{E_0}, \OO_Y) \to 0.
\end{equation}
\end{itemize}
\end{lemma}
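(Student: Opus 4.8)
The plan is to first compute the two cohomology sheaves of $\Psi E$, and then to dualise a truncation triangle for $\Psi E$, controlling the derived dual of each cohomology sheaf through its $\EExt$-sheaves on the surface $Y$. First I would record the cohomology of $\Psi E$: applying $\Psi$ to the torsion-pair sequence \eqref{eq9} and using that $E_0$ is $\Psi$-WIT$_0$ while $E_1$ is $\Psi$-WIT$_1$, the long exact sequence collapses to give $H^0(\Psi E) \cong \wh{E_0}$ and $H^1(\Psi E)\cong \wh{E_1}$ with all other cohomology vanishing, so that $\Psi E \in D^{[0,1]}_{\Coh (Y)}$. Since $E \in \mathcal B_X$ is a fiber sheaf, so are its subobject $E_0$ and quotient $E_1$; by Lemma \ref{lemma66} the transforms $\wh{E_0}, \wh{E_1}$ are then fiber sheaves on $Y$, hence torsion sheaves of codimension at least $1$ on the surface $Y$. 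Consequently $\HHom (\wh{E_i}, \OO_Y)=0$ and $\EExt^p (\wh{E_i}, \OO_Y)=0$ for $p \geq 3$, so each derived dual $(\wh{E_i})^\vee$ is concentrated in degrees $1$ and $2$.

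The key observation is that $\wh{E_0}$ is in fact pure $1$-dimensional, so that $\EExt^2 (\wh{E_0}, \OO_Y)=0$ and $(\wh{E_0})^\vee \cong \EExt^1 (\wh{E_0}, \OO_Y)[-1]$ sits in degree $1$ alone. Indeed, applying $\Phi$ to $\Psi E_0 \cong \wh{E_0}$ and using $\Phi \Psi = \mathrm{id}[-1]$ shows $\wh{E_0}$ is $\Phi$-WIT$_1$; since every $0$-dimensional sheaf on $Y$ is $\Phi$-WIT$_0$, the orthogonality of the torsion pair $(W_{0,Y}, W_{1,Y})$ prevents $\wh{E_0}$ from having a nonzero $0$-dimensional subsheaf, whence purity. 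I expect this purity step to be the main point of the argument, since it is exactly what forces $\Lambda E$ into $D^{[0,1]}_{\Coh (Y)}$ rather than merely $D^{[0,2]}_{\Coh (Y)}$.

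Finally I would dualise the truncation triangle $\wh{E_0} \to \Psi E \to \wh{E_1}[-1] \to \wh{E_0}[1]$ to obtain the distinguished triangle $(\wh{E_1})^\vee [1] \to (\Psi E)^\vee \to (\wh{E_0})^\vee \to (\wh{E_1})^\vee [2]$, in which $(\wh{E_1})^\vee [1]$ has cohomology $\EExt^1 (\wh{E_1}, \OO_Y)$ in degree $0$ and $\EExt^2 (\wh{E_1}, \OO_Y)$ in degree $1$, while $(\wh{E_0})^\vee$ is $\EExt^1 (\wh{E_0}, \OO_Y)$ placed in degree $1$. Reading off the associated long exact sequence of cohomology then yields simultaneously $\Lambda E \in D^{[0,1]}_{\Coh (Y)}$ (the degree-$2$ terms at both ends vanish), the isomorphism $\Lambda^0 E \cong \EExt^1 (\wh{E_1}, \OO_Y)$ of part (i), and the short exact sequence of part (ii). The direction of the extension in (ii) comes out correctly because $\EExt^2 (\wh{E_1}, \OO_Y)$ enters $\Lambda^1 E$ as the image of the subobject $(\wh{E_1})^\vee [1]$, with the quotient $\EExt^1 (\wh{E_0}, \OO_Y)$ coming from $(\wh{E_0})^\vee$. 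Equivalently, one may run the local-to-global spectral sequence $E_2^{p,q} = \EExt^p (H^{-q}(\Psi E), \OO_Y) \Rightarrow \Lambda^{p+q} E$, which degenerates at $E_2$ for the same support and purity reasons and assembles the same answer.
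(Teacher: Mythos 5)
Your proof is correct and takes essentially the same route as the paper: you dualise the same triangle $\wh{E_0} \to \Psi(E) \to \wh{E_1}[-1]$, and your key step — that $\wh{E_0}$, being a $\Phi$-WIT$_1$ fiber sheaf, has no $0$-dimensional subsheaves and is therefore pure $1$-dimensional, killing $\EExt^2(\wh{E_0},\OO_Y)$ — is exactly the paper's central observation. Reading off the long exact sequence of the dualised triangle then gives (i), (ii), and $\Lambda E \in D^{[0,1]}_{\Coh(Y)}$ just as in the paper's proof.
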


%[\textbf{Note:} since \eqref{eq9} is used frequently thereafter, on elliptic fibrations of any dimension, it is better to state it earlier, and not just for elliptic surfaces - make sure to keep its label]

%[define `fiber sheaf']

\begin{proof}
Take any $E \in \mathcal B_X$.  From \eqref{eq9}, we obtain the exact triangle in $D(Y)$
\begin{equation}\label{eq11}
\widehat{E_0} \to \Psi (E) \to \widehat{E_1}[-1] \to \widehat{E_0}[1].
\end{equation}
Taking derived duals, we obtain the exact triangle
\begin{equation}\label{eq10}
 (\widehat{E_1})^\vee [1] \to \Lambda E \to (\widehat{E_0})^\vee \to (\widehat{E_1})^\vee [2].
\end{equation}
Since $\widehat{E_0}$ is $\Phi$-WIT$_1$, it has no 0-dimensional subsheaves. Besides, since $E_0$ is a fiber sheaf, its transform $\widehat{E_0}$ remains a fiber sheaf.  Hence $\widehat{E_0}$ is pure 1-dimensional, and thus $\EExt^i (\widehat{E_0},\OO_Y)=0$ for all $i \neq 1$, meaning $(\widehat{E_0})^\vee \cong \EExt^1 (\widehat{E_0},\OO_Y)[-1]$ is a 1-dimensional sheaf sitting at degree 1.

On the other hand, since $E_1$ is a fiber sheaf, the same holds for $\wh{E_1}$, and so $\EExt^0 (\wh{E_1},\OO_Y)=0$.  The lemma then follows by taking the long exact sequence of cohomology of \eqref{eq10}.
\end{proof}

\begin{lemma}\label{lemma22}
Suppose $X$ is an elliptic surface, and $E \in \mathcal T_X \cap \mathcal B_X^\circ$.  Then $\Lambda E \in D^{[0,1]}_{\Coh (Y)}$.  Furthermore:
\begin{itemize}
\item[(i)] If $E \in W_{0,X}$,  then $\wh{E}$ is a locally free sheaf.
\item[(ii)]  $\Lambda^1 E$ is a 0-dimensional sheaf that is a quotient of $\EExt^2 (\wh{E_1},\OO_Y)$, where $E_1$ is as in \eqref{eq9}.
\end{itemize}
\end{lemma}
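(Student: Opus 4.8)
The plan is to reduce to the $\Psi$-WIT$_0$ case, prove the local freeness in (i) as the core of the argument, and then read off the cohomological claims from a duality triangle parallel to the one used for \ref{lemma23}.

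First I would pin down the geometry of $E$. Since $E\in\mathcal B_X^\circ$ it has no nonzero subsheaf supported on finitely many fibers; in particular its maximal $0$-dimensional subsheaf vanishes, so $E$ is pure of dimension $1$, and by \ref{lemma75} every irreducible component of $\supp E$ meets each fiber in dimension $0$ (if some component were vertical, the subsheaf of sections supported there would be a nonzero fiber sheaf, contradicting $E\in\mathcal B_X^\circ$). Consequently $E|_s$ is $0$-dimensional, hence $\Psi_s$-WIT$_0$, for every closed point $s$, so $E$ is $\Psi$-WIT$_0$ by \ref{lemma20}. Thus in \eqref{eq9} we have $E_1=0$ and $E=E_0\in W_{0,X}$, so the hypothesis of (i) is automatic and it suffices to treat that case. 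I also note that $E$ is flat over $S$: over the smooth curve $S$, flatness is torsion-freeness over $\OO_S$, and the $\OO_S$-torsion of $E$ is exactly its largest fiber subsheaf, which is $0$.

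The heart of the matter is (i): for such $E\in W_{0,X}$, the transform $\widehat E=\Psi E$ is locally free on the surface $Y$. I would prove this in two steps. \emph{(a) $\widehat E$ is torsion-free.} Since $\Phi\widehat E=\Phi\Psi E=E[-1]$, we have $\widehat E\in W_{1,Y}$, a class closed under subobjects, so any torsion subsheaf $T\subseteq\widehat E$ is $\Phi$-WIT$_1$; having no $0$-dimensional subsheaf it is pure $1$-dimensional, hence a fiber sheaf by \ref{remark11}. Full faithfulness of $\Phi$ then gives
\[
\Hom(T,\widehat E)\cong\Hom_{D(X)}(\Phi^1T[-1],E[-1])=\Hom_{\Coh(X)}(\Phi^1T,E),
\]
and $\Phi^1T$ is a fiber sheaf by \ref{lemma66} while $E$ has no fiber subsheaf, so the inclusion $T\hookrightarrow\widehat E$ vanishes and $T=0$. \emph{(b) $\widehat E$ is locally free.} As $E$ is $S$-flat and $\Psi$-WIT$_0$, the transform $\widehat E$ is $S$-flat and commutes with base change, so $\widehat E|_s\cong\widehat{E|_s}$ for every $s$; since $E|_s$ is $0$-dimensional, its fiberwise transform is locally free on the curve $Y_s$. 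Being $S$-flat with locally free restriction to every fiber, $\widehat E$ is flat over $Y$ by the fiberwise criterion of flatness, hence locally free.

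Finally I would organize the cohomological statements through the triangle obtained by applying $\Psi$ to \eqref{eq9} and dualising, as in \eqref{eq10}:
\[
(\widehat{E_1})^\vee[1]\to\Lambda E\to(\widehat{E_0})^\vee\to(\widehat{E_1})^\vee[2].
\]
Here $\widehat{E_0}=\widehat E$ is $\Phi$-WIT$_1$, hence has no $0$-dimensional subsheaf, so $\EExt^2(\widehat E,\OO_Y)=0$ and $(\widehat{E_0})^\vee\in D^{[0,1]}_{\Coh(Y)}$; and $\widehat{E_1}$ is a fiber (in particular torsion) sheaf, so $\HHom(\widehat{E_1},\OO_Y)=0$ and $(\widehat{E_1})^\vee[1]\in D^{[0,1]}_{\Coh(Y)}$. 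Thus $\Lambda E\in D^{[0,1]}_{\Coh(Y)}$. The long exact cohomology sequence of the triangle ends in
\[
\EExt^2(\widehat{E_1},\OO_Y)\to\Lambda^1E\to\EExt^1(\widehat{E_0},\OO_Y)\to0,
\]
and by (i) applied to $E_0$ the sheaf $\widehat{E_0}$ is locally free, so $\EExt^1(\widehat{E_0},\OO_Y)=0$; therefore $\Lambda^1E$ is a quotient of $\EExt^2(\widehat{E_1},\OO_Y)$, which is automatically $0$-dimensional on the surface $Y$. This yields (ii). The main obstacle is step (b): passing from fiberwise to global local freeness relies on $\Psi$ carrying $S$-flat WIT$_0$ sheaves to $S$-flat sheaves compatibly with base change, and on the fiberwise transform of a $0$-dimensional sheaf being locally free on the possibly singular fiber $Y_s$; the latter is the point I would check most carefully, invoking the base-change results of \cite{FMNT} and the fiberwise description of the Fourier--Mukai transform.
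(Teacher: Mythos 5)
Your reduction at the start is correct and in fact sharper than the paper's own treatment: since $E$ has no fiber subsheaves, every component of $\supp E$ is horizontal, so $E|_s$ is $0$-dimensional for \emph{every} closed point $s$, and Lemma \ref{lemma20} forces $E \in W_{0,X}$, i.e.\ $E_1=0$ in \eqref{eq9} (so part (ii) becomes the assertion $\Lambda^1 E=0$). The paper does not make this observation; it keeps $E_1$ possibly nonzero, quotes \cite[Lemma 2.6]{Lo5} to see $E_1$ is a fiber sheaf, quotes \cite[Corollary 5.4]{Lo5} for part (i), and then reads off (ii) from the triangle \eqref{eq10} exactly as you do. Your step (a) (torsion-freeness of $\wh{E}$ via $W_{1,Y}$ being closed under subobjects, Remark \ref{remark11}, and $E \in \mathcal B_X^\circ$) is sound, as is the concluding cohomological computation.

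However, step (b) — which you rightly single out as the delicate point — contains a genuine gap: it is \emph{false} that the $\Psi_s$-transform of an arbitrary $0$-dimensional sheaf on a singular fiber is locally free on $Y_s$. If $x$ is a node of an integral fiber $X_s$, then $\Psi_s(k(x))$ is the restriction of the Poincar\'{e} kernel to $\{x\}\times Y_s$, which by autoduality of compactified Jacobians is the rank-one torsion-free sheaf on $Y_s$ classified by the boundary point, and it is not locally free there (this is visible in Burban--Kreussler's computations on nodal Weierstrass curves). This does not contradict the lemma, because a global $E \in \mathcal T_X \cap \mathcal B_X^\circ$ must meet a node with intersection multiplicity at least $2$ (a section through a node would give fiber intersection number $\geq 2 >1$ on the smooth total space $X$), and such thickened restrictions \emph{can} transform to locally free sheaves: locally at a node, with $R=k[[u,v]]/(uv)$ and $\mathfrak m=(u,v)$, there is an exact sequence $0 \to \mathfrak m \to R^{2} \to \mathfrak m \to 0$, so an extension of the non-locally-free sheaf by itself can be free. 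Thus fiberwise local freeness of $\wh{E}|_s$ holds only \emph{a posteriori}, and your fiberwise strategy cannot establish it without controlling the local structure of $E|_s$ at singular points of fibers, which nothing in your argument provides. The repair stays global: you already have $\wh{E}$ torsion-free, and for any $0$-dimensional $Q$ on $Y$ one has $\Ext^1(Q,\wh{E}) \cong \Hom(\Phi^0 Q, E)=0$, since $\Phi^0 Q$ is a fiber sheaf and $E \in \mathcal B_X^\circ$; hence the canonical sequence $0 \to \wh{E} \to (\wh{E})^{DD} \to T \to 0$ with $T$ $0$-dimensional splits, forcing $T=0$, so $\wh{E}$ is reflexive and therefore locally free on the smooth surface $Y$ (this is the mechanism of Lemma \ref{lemma38}, and presumably of \cite[Corollary 5.4]{Lo5}, which the paper cites instead).
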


%[below, I reference Coro 5.4 in my paper 7 - but my assumptions there might be stronger.  Do something about this?] [In later applications, do I really need to know $\wh{E}$ is locally free in case (i)?] [Also, note that part (ii) relies in part (i), so (ii) requires the same assumptions on the fibrations as (i).]

\begin{proof}
Take any $E \in \mathcal T_X \cap \mathcal B_X^\circ$, then $E$ has no fiber subsheaves, and in particular, is pure 1-dimensional.  Let $E_i$ be as in \eqref{eq9}.  By \cite[Corollary 5.4]{Lo5}, we know $\wh{E_0}$ is a locally free sheaf.  Thus part (i) holds.

By \cite[Lemma 2.6]{Lo5}, we know $E_1$ is a fiber sheaf, and so $(\wh{E_1})^\vee$ is a 2-term complex sitting at degrees 1 and 2, where the degree-2 cohomology is $\EExt^2 (\wh{E_1}, \OO_Y)$, which is 0-dimensional.  Since $E_0$ is a subsheaf of $E$, we have $E_0 \in \mathcal T_X \cap \mathcal B_X^\circ$.  And by part (i), we know $\widehat{E_0}$ is locally free, and so $\EExt^1 (\wh{E_0},\OO_Y)=0$.  Taking the long exact sequence of cohomology of \eqref{eq10} then gives us part (ii) of the lemma.  That $\Lambda E \in D^{[0,1]}_{\Coh (Y)}$ also follows from the long exact sequence.
\end{proof}

Before we consider the images of sheaves supported in dimension 2 under the functor $\Lambda$, we prove:

\begin{lemma}\label{lemma26}
Suppose $\pi : X \to S$ is an elliptic surface or an elliptic threefold.  Suppose $E$ is a pure d-dimensional, $\Psi$-WIT$_0$ sheaf on $X$.
 \begin{itemize}
 \item[(i)] If $E \in \Coh(\pi)_{d-1}$, then $\wh{E}$ is a pure sheaf of dimension $d$.
 \item[(ii)] If $E \in \Coh(\pi)_d$ and $E$ has no subsheaves $E'$ in $\Coh(\pi)_{d-1}$,  then $\wh{E}$ is a pure sheaf of dimension $d+1$.
 %\item[(iii)] Assume further that $Y$ is smooth.  If $\wh{E}$ is a pure sheaf of dimension 2, then $\wh{E}$ is reflexive [\textbf{double-check} if this is the statement I want after I finish writing up the proof.].
 \end{itemize}
%Suppose $X$ is an elliptic surface, and $F \in \Coh (X)$ is a pure 2-dimensional $\Psi$-WIT$_0$ sheaf.  Then $\wh{F}$ is also pure 2-dimensional.
\end{lemma}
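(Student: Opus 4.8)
The plan is to treat both parts in parallel, resting on the single structural observation that $\wh E = \Psi E$ is $\Phi$-WIT$_1$: since $E$ is $\Psi$-WIT$_0$ and $\Phi\Psi \cong \mathrm{id}_{D(X)}[-1]$, we get $\Phi(\wh E) \cong E[-1]$, so $\wh E \in W_{1,Y}$ with $\Phi^1(\wh E) = E$. I would first pin down $\dim \wh E$ and then prove purity, the latter being the real work.

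\emph{Dimension.} By Lemma \ref{lemma66}, $\wh\pi(\supp \wh E) = \pi(\supp E)$, of dimension $d-1$ in case (i) and $d$ in case (ii); since $\wh\pi$ has relative dimension $1$ this gives $\dim \wh E \le d$ and $\dim\wh E\le d+1$ respectively. For the matching lower bounds I would pass to a general closed point $s \in \pi(\supp E)$, where $X_s, Y_s$ are smooth elliptic curves. By Lemma \ref{lemma20} and the base-change isomorphism \eqref{eq57}, for general $s$ the restriction $\wh E|_s$ equals $\Psi_s(E|_s)$ and is a nonzero $\Phi_s$-WIT$_1$ sheaf on $Y_s$. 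The crucial fiberwise input is that on a smooth elliptic curve every $0$-dimensional sheaf is WIT$_0$, so a WIT$_1$ sheaf has no torsion subsheaf and is a nonzero vector bundle. Hence $\wh E|_s$ is locally free of positive rank, so $\wh E$ is $1$-dimensional over the general point of its $\wh\pi$-image, giving $\dim \wh E = d$ in (i) and $\dim \wh E = d+1$ in (ii).

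\emph{Purity.} This is the main obstacle. Let $T \subseteq \wh E$ be a nonzero subsheaf with $\dim T < \dim \wh E$; I must derive a contradiction. Since $W_{1,Y}$ is the torsion-free class of the torsion pair $(W_{0,Y}, W_{1,Y})$, it is closed under subsheaves, so $T \in W_{1,Y}$ and $\wh T := \Phi^1 T$ is defined. The key step is to bound $\wh\pi(\supp T)$: because $\wh E|_s$ is torsion-free for general $s$, a subsheaf $T$ of dimension at most $\dim\wh E - 1$ cannot meet the general fiber in a $0$-dimensional subsheaf, which forces $\wh\pi(\supp T)\le \dim T - 1$ (the case $\dim T=0$ is immediate, as a $0$-dimensional sheaf lies in $W_{0,Y}\cap W_{1,Y}=0$). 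Applying the $\Phi$-version of Lemma \ref{lemma66} then yields $\pi(\supp \wh T) = \wh\pi(\supp T)\le \dim T-1$, so $\dim \wh T \le \dim T \le \dim\wh E -1$; explicitly $\dim \wh T \le d-1$ in (i) and $\wh T \in \Coh(\pi)_{\le d-1}$ in (ii).

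Finally I would apply $\Phi$ to the inclusion $\iota: T\hookrightarrow \wh E$. As $\Phi$ is an equivalence it is faithful, so $\Phi(\iota)\neq 0$; since $\Phi(\iota)$ is a morphism $\wh T[-1]\to E[-1]$ it corresponds to a nonzero map $g:\wh T \to E$, and $\image g$ is a nonzero subsheaf of $E$. In case (i), $\dim \image g \le \dim \wh T \le d-1 < d$ contradicts the purity of $E$. In case (ii), $\image g$ is a quotient of $\wh T\in\Coh(\pi)_{\le d-1}$, hence lies in $\Coh(\pi)_{\le d-1}$, while purity of $E$ forces $\dim \image g = d$ and thus $\image g \in \Coh(\pi)_{d-1}$, contradicting the hypothesis that $E$ has no subsheaf in $\Coh(\pi)_{d-1}$. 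In either case no such $T$ exists, so $\wh E$ is pure of the asserted dimension. I expect the delicate points to be the fiberwise torsion-freeness of $\wh E|_s$ and the resulting drop in $\wh\pi(\supp T)$: these require care with generic flatness, so that restriction to the general fiber is exact and compatible with $\Phi$, and it is exactly this fiberwise input that converts global information about $E$ into global purity of $\wh E$.
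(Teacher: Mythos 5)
Your closing mechanism --- transform the inclusion $T\hookrightarrow\wh{E}$ under $\Phi$ into a nonzero map $\wh{T}\to E$ and contradict purity or the no-small-subsheaf hypothesis --- is exactly the paper's. But your pivotal ``key step'' is misjustified as stated. The inference ``$\wh{E}|_s$ is torsion-free for general $s$, hence $\dimension \wh{\pi}(\supp T)\leq \dimension T-1$'' is a non sequitur whenever $T$ is supported over a \emph{proper closed subset} of $Z:=\wh{\pi}(\supp\wh{E})$: such a $T$ meets no general fiber at all, so generic torsion-freeness of $\wh{E}|_s$ says nothing about it. Concretely, a pure $1$-dimensional $T$ whose support is a multisection of $\wh{\pi}$ over a special curve inside $Z$ has $\dimension\wh{\pi}(\supp T)=\dimension T$, and nothing in your argument rules this out; the paper excludes precisely these sheaves by a different mechanism, namely Lemma \ref{lemma75} together with Remark \ref{remark21} (a pure $1$-dimensional sheaf with horizontal support lies in $\{\Coh^{\leq 0}(Y_s)\}^\uparrow\subseteq W_{0,Y}$, contradicting $T\in W_{1,Y}$), and handles $2$-dimensional $T$ on threefolds via $W_{0,Y}'\cap W_{1,Y}'\subseteq \mathcal B_Y$ (Lemmas \ref{lemma47} and \ref{lemma29}). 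Your argument is in fact salvageable without that machinery, because the only consequence you ever use is the weaker bound $\dimension\wh{\pi}(\supp T)\leq \dimension Z-1$: one checks that $Z$ is equidimensional (in case (ii) a component of $\supp E$ with image of dimension $\leq d-1$ would produce a forbidden subsheaf of $E$; in case (i) it is automatic since fibers are $1$-dimensional), so an image of maximal dimension must contain a whole component of $Z$, where your generic-fiber torsion-freeness argument does apply. But as written, the asserted bound $\dimension\wh{\pi}(\supp T)\leq\dimension T-1$ is unproven, and for low-dimensional $T$ it cannot be proven by the generic-fiber method.

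Two further flaws in the dimension step. First, a general closed point of $\pi(\supp E)$ need \emph{not} have smooth fiber: when $\pi(\supp E)\subsetneq S$ (all of case (i), and case (ii) with $d<\dimension S$), the image may lie entirely inside the discriminant, so ``$X_s,Y_s$ are smooth elliptic curves'' and ``$\wh{E}|_s$ is a vector bundle'' are unavailable. Fortunately all you need is that a nonzero $\Phi_s$-WIT$_1$ sheaf has no $0$-dimensional subsheaves, hence is $1$-dimensional; this holds on \emph{every} fiber, since the kernels are sheaves flat over both factors (so $0$-dimensional sheaves are WIT$_0$ on each fiber) and $\Phi_s$ is an equivalence by Lemma \ref{lemma14}. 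Second, the identification $\wh{E}|_s\cong \Psi_s(E|_s)$ for general $s\in\pi(\supp E)$ does not follow from \eqref{eq57} alone: $E$ is never flat over $S$ when $\pi(\supp E)\subsetneq S$, so $L\iota_s^\ast E$ has nonzero negative cohomology for all such $s$, and the base-change spectral sequence contributes a term $\Psi_s^1\bigl(H^{-1}(L\iota_s^\ast E)\bigr)$ to $\wh{E}|_s$. The paper's maneuver --- pull back along $Z_{red}\hookrightarrow S$, apply generic flatness to $E$ and $\wh{E}$ over $Z$, and invoke \cite[Corollary 6.3(3)]{FMNT} --- is what your phrase ``care with generic flatness'' must actually amount to; you flag the issue but do not carry out the fix, and it is a genuine step rather than a routine verification.
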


%(Note: this lemma could be generalised as follows: suppose $X,Y$ are fibrations satisfying conditions (i) through (iii) laid out in Section \ref{beginning}, and such that  any skyscraper sheaf $\OO_x$ of length 1 on $X$ is $\Psi$-WIT$_0$  )

\begin{proof}
(i):  suppose $E$ is a pure $d$-dimensional $\Psi$-WIT$_0$ sheaf lying in $\Coh (\pi)_{d-1}$.  When $d=3$, $E$ is torsion-free, and the result is just \cite[Lemma 9.4]{BMef}.  When $d=1$, $E$ is a fiber sheaf, and so $\wh{E}$ is a $\Phi$-WIT$_1$ fiber sheaf, which is necessarily pure.  Now, suppose $d=2$.  Then $\wh{E} \in \Coh (\hat{\pi})_{1}$ by Lemma \ref{lemma66}, and so $\dimension \wh{E} \leq 2$.  Suppose there is a nonzero subsheaf $T$ of $\wh{E}$ where $T \in \Coh_{\leq 1}(Y)$.  Since $\wh{E} \in W_{1,Y}$, we have $T \in W_{1,Y}$ as well.  In particular,  $T$ cannot have any subsheaf in $\{ \Coh^{\leq 0}(Y_s)\}^\uparrow$ by Lemma \ref{lemma20}.  Hence $T$ is forced to be a fiber sheaf.  The injection $T \hookrightarrow \wh{E}$ is then transformed under $\Phi$ to a nonzero map $\wh{T} \to E$, implying $E$ has a fiber subsheaf, contradicting its purity.  Hence $\wh{E}$ must be pure when $d=2$. This finishes the proof of part (i).

(ii):  suppose $E$ is a pure $d$-dimensional $\Psi$-WIT$_0$ sheaf, except that now we suppose  $E \in \Coh (\pi)_d$.  Let us write $Z := \hat{\pi} (\text{supp}\, \wh{E}) = \pi (\text{supp}\, E)$.  In this case, the fiber $E|_s$ is 0-dimensional for a general closed point $s \in Z$.  We will now show that, for a general closed point $s \in Z$, we have $\dimension (\wh{E}|_s)=1$.  If we write $\wh{E}_{Z_{red}}$ for the pullback of $\wh{E}$ along the base change $Z_{red} \hookrightarrow Z \hookrightarrow S$, then it is enough to show $\dimension ((\wh{E}|_{Z_{red}})|_s)=1$ for a general $s \in Z$.  That is, we can assume that $Z$ is reduced.  Applying generic flatness to $E$ and $\wh{E}$ with respect to the morphism $X \times_S Z \to Z$ together with \cite[Corollary 6.3(3)]{FMNT}, we obtain that, for a general $s \in Z$, the restriction $\wh{E}|_s \cong \wh{E|_s}$ is 1-dimensional, as wanted.  Therefore, we have $\dimension \wh{E}=d+1$.

%[state \cite[Corollary 6.3(3)]{FMNT} somewhere for clarity?]

Now, suppose we have an injection $T \hookrightarrow \wh{E}$ where $0\neq T \in \Coh_{\leq d}(Y)$.  Then $T$ is $\Phi$-WIT$_1$.  We consider the different cases:
\begin{itemize}
\item When $d=2$, and $X$ is of dimension 3: if $\hat{\pi}(\supp (T))$ is $2$-dimensional (and hence equal to $S$), then $T$ itself is $2$-dimensional.  This implies that $T|_s$ is 0-dimensional for a general closed point $s \in S$, and so $T \in W_{0,Y}'$.  However, we also have $T \in W_{1,Y}'$ by Lemma \ref{lemma47}(ii).  Thus $T$ lies in $\in W_{0,Y}' \cap W_{1,Y}'$, which is contained in $\mathcal{B}_Y$ by Lemma \ref{lemma29}, i.e.\ $\dimension (\wh{\pi}(\supp (T))) \leq 1$.  The injection $T \hookrightarrow \wh{E}$ is then taken by $\Phi$ to a nonzero morphism $\wh{T} \to E$, contradicting the assumption that $E$ has no subsheaves in $\Coh (\pi)_{d-1}$.
\item When $d=1$, and $X$ is of dimension 2 or 3: by Lemma \ref{lemma75} and Remark \ref{remark21}, that $T \in W_{1,Y}$ implies $T$ cannot have any subsheaf in $\{\Coh^{\leq 0}(Y_s)\}^\uparrow$ and must be a fiber 1-dimensional sheaf.  Then the injection $T \hookrightarrow \wh{E}$ is taken by $\Phi$ to a nonzero morphism $\wh{T} \to E$, contradicting the assumption that $E$ has no subsheaves lying in $\Coh (\pi)_{d-1}$.
\end{itemize}
Hence $\wh{E}$ must be pure of dimension $d+1$, finishing the proof of (ii).
\end{proof}

Lemma \ref{lemma26} also yields the following results on reflexivity of sheaves under Fourier-Mukai transforms:

\begin{lemma}\label{lemma38}
Let $\pi : X \to S$ be an elliptic surface or an elliptic threefold.  Suppose $E$ is a $\Psi$-WIT$_0$ torsion-free  sheaf.  Then $\wh{E}$ is torsion-free, and is reflexive whenever
\[
\Ext^1 (W_{0,X} \cap \Coh (\pi)_{\leq 0} , E)=0.
\]
\end{lemma}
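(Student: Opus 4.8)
The plan is to treat the two assertions separately: torsion-freeness will drop straight out of Lemma \ref{lemma26}, while reflexivity will be reduced to the $\Ext^1$-hypothesis by transporting the double-dual sequence of $\wh E$ across the Fourier-Mukai equivalence and identifying its cokernel as exactly the kind of sheaf the hypothesis controls.

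For torsion-freeness: since $E$ is torsion-free on the $n$-dimensional $X$ (with $n=2$ or $3$), it is pure of dimension $n$ with full support, so $\dimension \pi(\supp E) = \dimension S = n-1$, i.e.\ $E \in \Coh(\pi)_{n-1}$. Writing $d = n = \dimension E$, this says $E$ is a pure $d$-dimensional $\Psi$-WIT$_0$ sheaf lying in $\Coh(\pi)_{d-1}$, so Lemma \ref{lemma26}(i) gives that $\wh E$ is pure of dimension $n$ on the $n$-dimensional $Y$, which is torsion-freeness.

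For reflexivity, I would form the double-dual sequence $0 \to \wh E \to \wh E^{\vee\vee} \to Q \to 0$ in $\Coh(Y)$, where $Q$ is supported in codimension $\geq 2$, and aim to prove $Q = 0$. The key preliminary is that both $\wh E$ and $\wh E^{\vee\vee}$ are $\Phi$-WIT$_1$: indeed $\wh E$ is $\Phi$-WIT$_1$ with transform $E$ by hypothesis, while $\wh E^{\vee\vee}$, being reflexive hence torsion-free, lies in $\mathcal{B}_Y^\circ$ and agrees with $\wh E$ away from the codimension-$\geq 2$ locus $\supp Q$, hence on a general fiber; combined with $\wh E \in W_{1,Y} \subseteq W_{1,Y}'$ (Lemma \ref{lemma47}(ii)) this puts $\wh E^{\vee\vee} \in W_{1,Y}' \cap \mathcal{B}_Y^\circ \subseteq W_{1,Y}$ by Lemma \ref{lemma29}(ii). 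Applying $\Phi$ to the double-dual sequence and reading the long exact cohomology sequence then exhibits $\Phi^0 Q$ as a subsheaf of $E$; but $Q$ has dimension $\leq 1$, so the $\Phi$-transform $\Phi^0 Q$ is supported over a proper subset of $S$ (by Lemma \ref{lemma66} applied to $\Phi$) and is therefore torsion, forcing $\Phi^0 Q = 0$ since $E$ is torsion-free. Hence $Q$ is itself $\Phi$-WIT$_1$, and $\Phi$ converts the double-dual sequence into a short exact sequence $0 \to E \to G \to \wh Q \to 0$ of $\Psi$-WIT$_0$ sheaves on $X$, with $\wh Q = \Phi^1 Q$.

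The final step identifies $\wh Q$ with the class of sheaf appearing in the hypothesis. Being $\Phi$-WIT$_1$, $Q$ has no $0$-dimensional subsheaves (those would be $\Phi$-WIT$_0$), so it is pure $1$-dimensional and, by Remark \ref{remark11}, a fiber sheaf; then $\wh Q$ is a $\Psi$-WIT$_0$ fiber sheaf, i.e.\ $\wh Q \in W_{0,X}\cap \Coh(\pi)_{\leq 0}$. Thus the extension class of $0 \to E \to G \to \wh Q \to 0$ lies in $\Ext^1(W_{0,X}\cap \Coh(\pi)_{\leq 0}, E) = 0$, so the sequence splits, and since $\Psi$ carries it back to the original double-dual sequence, the latter splits too, forcing the torsion sheaf $Q$ to be a direct summand of the torsion-free $\wh E^{\vee\vee}$; hence $Q = 0$ and $\wh E$ is reflexive. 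I expect the main obstacle to be the WIT-index bookkeeping in the middle step: showing that the a priori merely codimension-$\geq 2$ cokernel $Q$ is forced to be a $\Phi$-WIT$_1$ fiber sheaf, so that its transform lands precisely in $W_{0,X}\cap \Coh(\pi)_{\leq 0}$; the remainder is formal manipulation of the equivalence.
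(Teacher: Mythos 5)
Your proposal is correct and follows essentially the same route as the paper: torsion-freeness via Lemma \ref{lemma26}(i), the double-dual sequence $0 \to \wh{E} \to (\wh{E})^{DD} \to T \to 0$ whose cokernel is shown to be a $\Phi$-WIT$_1$ fiber sheaf (via Lemma \ref{lemma75} and Remark \ref{remark21}, which is the content of Remark \ref{remark11}), and the $\Ext^1$-hypothesis transported across the Fourier-Mukai equivalence to split the sequence and force $T=0$. The only, immaterial, difference lies in the middle step: the paper kills the $\Phi$-WIT$_0$ part of $T$ directly from $\Ext^1 (W_{0,Y}\cap \mathcal B_Y, \wh{E}) \cong \Hom (W_{1,X}\cap \mathcal B_X, E)=0$ (using torsion-freeness of $E$), whereas you first establish $(\wh{E})^{DD} \in W_{1,Y}$ via Lemma \ref{lemma47}(ii) and Lemma \ref{lemma29}(ii) and then observe that $\Phi^0 (T)$ would be a torsion subsheaf of the torsion-free $E$ --- both arguments rest on exactly the same input.
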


\begin{proof}
By Lemma \ref{lemma26}(i), we know $\wh{E}$ is pure of codimension 0, so we have a short exact sequence
\begin{equation}\label{eq21}
  0 \to \wh{E} \to (\wh{E})^{DD} \to T \to 0
\end{equation}
where $(\wh{E})^{DD}$, being the double dual of $\wh{E}$ (where the `dual' of a sheaf is in the sense of \cite[Definition 1.1.7]{HL}), is torsion-free and reflexive, while  $T$ is a coherent sheaf of codimension at least 2 (which implies $T \in \mathcal B_Y$).  Note that $\Ext^1 (W_{0,Y}\cap \mathcal B_Y,\wh{E}) \cong \Hom (W_{1,X}\cap \mathcal B_X, E)=0$ since $E$ is torsion-free.  Now, take any $A \in W_{0,Y} \cap \mathcal B_Y$.  Applying the functor $\Hom (A,-)$ to \eqref{eq21}, we get $\Hom (A,T)=0$.  In other words, we have $\Hom (W_{0,Y} \cap \mathcal B_Y, T)=0$.  This implies that $T \in W_{1,Y}$, and so $(\wh{E})^{DD}$ is also $\Phi$-WIT$_1$.  On the other hand, since $\dimension T \leq 1$, Lemma \ref{lemma75} and Remark \ref{remark21} together imply that $T$ must be a fiber sheaf.  That is, we have $T \in W_{1,Y} \cap \Coh (\wh{\pi})_{\leq 0}$.  The lemma then follows from
\[
  \Ext^1 (W_{1,Y}\cap \Coh (\wh{\pi})_{\leq 0},\wh{E}) \cong \Ext^1 (W_{0,X} \cap \Coh (\pi)_{\leq 0},E).
\]
\end{proof}

\begin{coro}\label{coro4}
Suppose $X$ is an elliptic threefold where all the fibers are Cohen-Macaulay with trivial dualising sheaves.  If $E$ is a $\Psi$-WIT$_0$ reflexive torsion-free sheaf, then $\wh{E}$ is also a reflexive torsion-free sheaf.
\end{coro}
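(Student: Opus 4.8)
The plan is to bootstrap Corollary \ref{coro4} directly from Lemma \ref{lemma38}. The conclusion of that lemma says $\wh{E}$ is torsion-free and, moreover, reflexive as soon as the vanishing $\Ext^1(W_{0,X}\cap \Coh(\pi)_{\leq 0}, E)=0$ holds. Since $E$ is already given to be $\Psi$-WIT$_0$ and torsion-free, the entire content of the corollary reduces to verifying this single $\Ext^1$-vanishing condition under the extra hypotheses that $E$ is reflexive and that all fibers of $\pi$ are Cohen-Macaulay with trivial dualising sheaf.

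So first I would fix an arbitrary $A \in W_{0,X}\cap \Coh(\pi)_{\leq 0}$, i.e.\ a $\Psi$-WIT$_0$ fiber sheaf (recall $\Coh(\pi)_{\leq 0}$ is exactly the fiber sheaves), and aim to show $\Ext^1_X(A,E)=0$. The natural tool is Serre duality on $X$: $\Ext^1_X(A,E) \cong \Ext^{n-1}_X(E, A\otimes \omega_X)^\vee$ where $n=\dimension X=3$, so I would be computing $\Ext^2_X(E, A\otimes \omega_X)$. The reflexivity of $E$ is what should force this group to vanish: a reflexive sheaf on a smooth threefold has homological dimension at most $1$, equivalently $\EExt^i_X(E,\OO_X)=0$ for $i\geq 2$, so $E$ admits locally a length-two locally free resolution. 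The hypothesis on the fibers—Cohen-Macaulay with trivial dualising sheaf—is designed to control how $A$, a sheaf supported on finitely many fibers, interacts with $\omega_X$ and with the depth/codimension estimates; in particular it should let me treat $A\otimes\omega_X$ as cohomologically indistinguishable from $A$ along the fibers and thereby convert the vanishing into a statement about $\EExt^{\geq 2}_X(E,-)$ against a codimension-$2$ sheaf.

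The concrete mechanism I would use is a local-to-global spectral sequence $\EExt^p_X(E, A\otimes\omega_X) \Rightarrow \Ext^{p+q}$ together with the bound on homological dimension of $E$: because $E$ is reflexive on a smooth threefold, $\EExt^p_X(E,\OO_X)=0$ for $p\geq 2$, and since $A$ is supported in codimension $2$ the relevant $\EExt$ sheaves vanish for degree reasons once combined with the triviality of the fiberwise dualising sheaf. I would then assemble these local vanishings into the global vanishing $\Ext^1_X(A,E)=0$, and feed this directly into Lemma \ref{lemma38} to conclude that $\wh{E}$ is reflexive; its torsion-freeness is already part of that lemma's output.

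The main obstacle I anticipate is making the fiberwise Cohen-Macaulay-with-trivial-dualising-sheaf hypothesis do its work cleanly. The sheaf $A$ lives only on finitely many fibers, which are themselves singular Gorenstein curves (or surfaces cut out inside $X$), so $\omega_X$ restricted to a neighborhood of $\supp A$ does not simply decouple from $A$; the trivial fiberwise dualising sheaf is precisely the input that lets $\omega_X|_{\text{fiber}}$ be absorbed, but relating the \emph{absolute} $\EExt_X$ computation to the \emph{relative/fiberwise} dualising data requires care with adjunction and the conormal sheaf of the fiber. I expect the delicate point to be verifying that $\EExt^2_X(E, A\otimes\omega_X)=0$ rather than merely $\EExt^2_X(E,\OO_X)=0$, i.e.\ confirming that twisting the target by the codimension-$2$ sheaf $A\otimes\omega_X$ does not resurrect the top $\EExt$ that reflexivity of $E$ kills; this is where the Cohen-Macaulay and triviality assumptions must be invoked precisely, and it is the step most likely to hide a subtlety.
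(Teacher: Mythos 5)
Your top-level reduction is exactly the paper's: the paper also deduces the corollary from Lemma \ref{lemma38}, by establishing the vanishing $\Ext^1(\Coh^{\leq 1}(X),E)=0$ for $E$ reflexive and torsion-free --- except that the paper simply quotes \cite[Lemma 4.21]{Lo10} for this, whereas you attempt to prove the vanishing directly, and your direct argument has a genuine gap at the spectral-sequence step. After Serre duality you must kill $\Ext^2_X(E,A\otimes\omega_X)$, and in the local-to-global spectral sequence $H^p(X,\EExt^q_X(E,A\otimes\omega_X))\Rightarrow \Ext^{p+q}_X(E,A\otimes\omega_X)$ the two facts you invoke --- that $E$ has homological dimension $\leq 1$ (so $\EExt^q_X(E,-)=0$ for $q\geq 2$), and that $\supp (A)$ is at most $1$-dimensional (so the $(p,q)=(2,0)$ term dies) --- say nothing about the term $E_2^{1,1}=H^1(X,\EExt^1_X(E,A\otimes\omega_X))$. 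That sheaf is in general nonzero: it is supported on the intersection of $\supp(A)$ with the locus where $E$ fails to be locally free, and $H^1$ of a sheaf supported on a curve does not vanish ``for degree reasons.'' The missing ingredient is the standard fact that a reflexive sheaf on a smooth threefold is locally free outside a closed subset of codimension $\geq 3$, i.e.\ a finite set of points; hence $\EExt^1_X(E,M)$ is $0$-dimensional for every coherent sheaf $M$, its $H^1$ vanishes, and the spectral sequence closes. With that one sentence inserted, your computation is correct.

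Your anticipated ``delicate point'' --- that the Cohen--Macaulay-fibers-with-trivial-dualising-sheaf hypothesis must be invoked to let $A\otimes\omega_X$ be ``absorbed'' fiberwise --- is a red herring for this step. Since $X$ is smooth and projective, $\omega_X$ is a line bundle, so $A\otimes\omega_X$ is again a sheaf supported in codimension $\geq 2$ and no fiberwise duality or adjunction enters anywhere; indeed one can bypass Serre duality entirely: a reflexive sheaf is a second syzygy, so it sits in a short exact sequence $0\to E\to F\to C\to 0$ with $F$ locally free and $C$ torsion-free, whence $\Hom(A,C)=0$ because $A$ is torsion, and $\Ext^1(A,F)=0$ because $\HHom(A,F)=0$ and $\EExt^1_X(A,\OO_X)=0$ when $\supp(A)$ has codimension $\geq 2$; the long exact sequence then gives $\Ext^1(A,E)=0$ with no hypothesis on the fibration at all. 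The fiber hypothesis appears in the statement only because the paper's one-line proof cites \cite[Lemma 4.21]{Lo10}, which is formulated in that setting.
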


\begin{proof}
Since $E$ is reflexive and torsion-free, we have $\Ext^1 (\Coh^{\leq 1}(X),E)=0$ by \cite[Lemma 4.21]{Lo10}.  The corollary then follows from Lemma \ref{lemma38}.
\end{proof}

\begin{lemma}\label{lemma70}
Let $X$ be an elliptic threefold.  Suppose that:
 \begin{itemize}
 \item $E \in W_{0,X}$;
 \item $\Hom (\Phi (\Coh^{\leq 0}(Y)),E)=0$; and
 \item $\wh{E}$ is a pure sheaf of dimension at least 2.
 \end{itemize}
 Then $\EExt^2 (\wh{E},\OO_Y)=0$, and $\Lambda (E) \in D^{[0,1]}_{\Coh (Y)}$. In particular, if $\wh{E}$ is pure of dimension 2, then $\wh{E}$ is reflexive.
\end{lemma}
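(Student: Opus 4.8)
The plan is to compute $\Lambda(E)$ directly as the derived dual of the transform, read off its cohomology sheaves as $\EExt$-sheaves, and then convert the hypothesis $\Hom(\Phi(\Coh^{\leq 0}(Y)),E)=0$ into a statement about the local homological dimension of $\wh E$. Since $E\in W_{0,X}$, we have $\Psi(E)\cong \wh E$ concentrated in degree $0$, so $\Lambda(E)=\wh E^{\vee}$ and $\Lambda^i(E)=\EExt^i_Y(\wh E,\OO_Y)$ for every $i$. First I would translate the vanishing hypothesis. Using that $\Psi$ is an equivalence with $\Psi\Phi\cong \mathrm{id}_{D(Y)}[-1]$, for any $T\in\Coh^{\leq 0}(Y)$ we obtain
\[
\Hom_{D(X)}(\Phi T,E)\cong \Hom_{D(Y)}(\Psi\Phi T,\Psi E)\cong \Hom_{D(Y)}(T,\wh E[1])=\Ext^1_Y(T,\wh E),
\]
so the hypothesis is exactly $\Ext^1_Y(T,\wh E)=0$ for all $0$-dimensional $T$.

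The main step is to feed this into Serre duality on the smooth projective threefold $Y$. Serre duality gives $\Ext^1_Y(T,\wh E)\cong \Ext^2_Y(\wh E,T\otimes\omega_Y)^{\vee}$; as $T$ ranges over $\Coh^{\leq 0}(Y)$ so does $T\otimes\omega_Y$ (an invertible twist permutes $0$-dimensional sheaves), hence the hypothesis is equivalent to $\Ext^2_Y(\wh E,T)=0$ for every $0$-dimensional $T$, and in particular $\Ext^2_Y(\wh E,k(y))=0$ for every closed point $y$. Since $k(y)$ is a skyscraper, the local-to-global $\Ext$ spectral sequence degenerates and identifies this with $\Ext^2_{\OO_{Y,y}}(\wh E_y,k(y))$, whose dimension is the second Betti number of $\wh E_y$. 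Its vanishing forces $\mathrm{pd}_{\OO_{Y,y}}(\wh E_y)\leq 1$ at every closed point (equivalently, depth $\geq 2$ everywhere by Auslander--Buchsbaum, which is where purity in dimension $\geq 2$ keeps the conclusion consistent). Therefore $\EExt^i_Y(\wh E,\OO_Y)_y=\Ext^i_{\OO_{Y,y}}(\wh E_y,\OO_{Y,y})=0$ for all $i\geq 2$, and a coherent sheaf vanishing at every closed point is zero, so $\EExt^i_Y(\wh E,\OO_Y)=0$ for $i\geq 2$; in particular $\EExt^2_Y(\wh E,\OO_Y)=0$. As $\wh E$ is a sheaf we also have $\EExt^{<0}=0$, so $\Lambda(E)=\wh E^{\vee}$ has cohomology only in degrees $0$ and $1$, i.e.\ $\Lambda(E)\in D^{[0,1]}_{\Coh(Y)}$.

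For the final assertion, suppose $\wh E$ is pure of dimension $2$, hence of codimension $1$. Then $\EExt^0_Y(\wh E,\OO_Y)=\HHom(\wh E,\OO_Y)=0$, since a torsion sheaf admits no nonzero map to the locally free $\OO_Y$; combined with the vanishing above this yields $\wh E^{\vee}\cong \wh E^{D}[-1]$ with $\wh E^D:=\EExt^1_Y(\wh E,\OO_Y)$, concentrated in degree $1$. Applying biduality for coherent sheaves on the smooth variety $Y$, namely $(\wh E^{\vee})^{\vee}\cong \wh E$, to the identity $\wh E^{\vee}=\wh E^D[-1]$ gives $(\wh E^D)^{\vee}\cong \wh E[-1]$; taking degree-$1$ cohomology identifies $\EExt^1_Y(\wh E^D,\OO_Y)=\wh E^{DD}$ with $\wh E$, so $\wh E$ is reflexive.

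The main obstacle is the middle paragraph: correctly packaging the single geometric hypothesis into the homological condition $\mathrm{pd}(\wh E_y)\leq 1$. One must track the shift in the adjunction coming from $\Psi\Phi\cong\mathrm{id}[-1]$, the $\omega_Y$-twist in Serre duality, and the passage from $\Ext^2(\wh E,k(y))=0$ to the vanishing of all higher $\EExt$-sheaves, which rests on the fact that a minimal free resolution terminating at the first syzygy forces $\mathrm{pd}\leq 1$. Everything else—the reduction $\Lambda E=\wh E^{\vee}$ and the biduality computation for reflexivity—is formal.
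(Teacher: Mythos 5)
Your proof is correct, but it takes a genuinely different route from the paper's. The paper works with the canonical sequence $0 \to \wh{E} \to (\wh{E})^{DD} \to T \to 0$: it uses the vanishing $\Hom (\Phi (\Coh^{\leq 0}(Y)),E)=0$ (in the same translated form you derive, $\Ext^1(Q,\wh{E}) \cong \Hom(\Phi(Q),E)$) to show via a splitting trick that the maximal $0$-dimensional subsheaf $Q$ of $T$ would split off from a pure subsheaf of $(\wh{E})^{DD}$, forcing $Q=0$; then $T$ is pure $1$-dimensional, $\EExt^2(\wh{E},\OO_Y) \cong \EExt^3(T,\OO_Y)=0$, and the reflexivity assertion is quoted from \cite[Proposition 1.1.10]{HL}. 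You instead globalize the hypothesis through Serre duality to get $\Ext^2_Y(\wh{E},k(y))=0$ at every closed point, and then use minimal free resolutions over $\OO_{Y,y}$ to conclude $\mathrm{pd}_{\OO_{Y,y}}(\wh{E}_y)\leq 1$ everywhere; this yields the stronger statement $\EExt^i(\wh{E},\OO_Y)=0$ for \emph{all} $i\geq 2$ at one stroke (the paper obtains $\EExt^3(\wh{E},\OO_Y)=0$ only implicitly from the same long exact sequence), and it makes transparent that the purity hypothesis on $\wh{E}$ is needed only for the final reflexivity claim, not for the $\EExt$-vanishing. Your approach buys a cleaner homological invariant (local projective dimension $\leq 1$) and avoids the double-dual sequence entirely; the paper's approach stays sheaf-theoretic and isolates the geometric obstruction (a $0$-dimensional defect in the torsion of the double dual), which is more in line with the techniques used in the neighboring lemmas. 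Two small points to tighten: reflexivity means the \emph{natural} map $\wh{E} \to \EExt^1(\EExt^1(\wh{E},\OO_Y),\OO_Y)$ is an isomorphism, so you should remark that the derived biduality isomorphism $(\wh{E}^\vee)^\vee \cong \wh{E}$ induces precisely this natural map (your argument essentially reproves the cited case of \cite[Proposition 1.1.10]{HL}); and in \cite{HL} the dual is taken with respect to $\omega_Y$ rather than $\OO_Y$, which is harmless on a smooth variety since twisting by a line bundle does not affect reflexivity, but is worth a word.
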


\begin{proof}
Consider the canonical short exact sequence
\begin{equation}\label{eq56}
0 \to \wh{E} \to (\wh{E})^{DD} \to T \to 0
\end{equation}
in $\Coh (Y)$ where $T \in \Coh^{\leq 1}(Y)$.  Since $(\wh{E})^{DD}$ is a reflexive sheaf on a threefold, we have $\EExt^i((\wh{E})^{DD},\OO_Y)=0$ for $i \geq 2$ regardless of whether $\wh{E}$ is of dimension 2 or 3 (see \cite[Proposition 1.1.6(ii), Proposition 1.1.10(4')]{HL}).  On the other hand,  from \eqref{eq56} we have the exact sequence
\[
\EExt^2( (\wh{E})^{DD},\OO_Y) \to \EExt^2 (\wh{E},\OO_Y) \to \EExt^3 (T,\OO_Y) \to 0,
\]
which gives $\EExt^2 (\wh{E},\OO_Y) \cong \EExt^3 (T,\OO_Y)$.  We will now show that $\EExt^3 (T,\OO_Y)$ vanishes by showing that $T$ is pure 1-dimensional.

Let $Q$ be the maximal 0-dimensional subsheaf of $T$; we can pull back the short exact sequence \eqref{eq56} along the inclusion $Q \hookrightarrow T$, to obtain the following commutative diagram of short exact sequences in $\Coh (Y)$:
\[
\xymatrix{
  0 \ar[r] & \wh{E} \ar@{=}[d] \ar[r] & F \ar[d] \ar[r] & Q \ar[d] \ar[r] & 0 \\
  0 \ar[r] & \wh{E} \ar[r] & (\wh{E})^{DD} \ar[r] & T \ar[r] & 0
}.
\]
Then $F$ is necessarily pure of dimension at least 2, since it is a  subsheaf of $(\wh{E})^{DD}$.  However, we have $\Ext^1 (Q,\wh{E}) \cong \Hom (Q,\wh{E}[1]) \cong \Hom (\Phi (Q),E)$, which vanishes since $\Hom (\Phi (\Coh^{\leq 0}(Y)),E)=0$ by assumption, implying $F \cong \wh{E} \oplus Q$, contradicting the purity of $F$ unless $Q=0$.  Hence $T$ is pure 1-dimensional, and we obtain $\Lambda (E) \in D^{[0,1]}_{\Coh (Y)}$.  The last assertion of the lemma follows from \cite[Proposition 1.1.10]{HL}.
\end{proof}

\begin{coro}\label{coro3}
Suppose $\pi : X \to S$ is an elliptic threefold. Suppose $E$ lies in $\Coh^{\leq 1}(X) \cap \Coh (\pi)_1$ and has no  fiber subsheaves.  Then $E$ is $\Psi$-WIT$_0$, and its transform $\wh{E}$ is a 2-dimensional reflexive sheaf.
\end{coro}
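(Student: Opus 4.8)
The plan is to establish the three assertions—that $E$ is $\Psi$-WIT$_0$, that $\wh{E}$ is $2$-dimensional, and that $\wh{E}$ is reflexive—in that order, obtaining the last two from Lemma \ref{lemma26}(ii) and Lemma \ref{lemma70} respectively once the WIT$_0$ property is in hand. First I would extract two consequences of the hypothesis that $E$ has no fiber subsheaves. Since the maximal $0$-dimensional subsheaf of $E$ is itself a fiber sheaf, it must vanish, so $E$ is pure $1$-dimensional. Moreover, any nonzero morphism from a fiber sheaf into $E$ would have image a nonzero fiber subsheaf of $E$; recalling from Section \ref{beginning} that $\Phi$ takes $0$-dimensional sheaves on $Y$ to pure $1$-dimensional fiber sheaves on $X$, this yields $\Hom (\Phi (\Coh^{\leq 0}(Y)),E)=0$, which is precisely the second hypothesis required by Lemma \ref{lemma70}.

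The main step is to prove that $E$ is $\Psi$-WIT$_0$, for which I would invoke the fiberwise criterion of Lemma \ref{lemma20}: it suffices to check that $E|_s$ is $\Psi_s$-WIT$_0$ for every closed point $s \in S$. The key claim is that every irreducible component $Z$ of $\supp (E)$ is of type (ii) in the sense of Lemma \ref{lemma75}, i.e.\ meets each fiber in dimension $0$. Indeed, were some component $Z$ contained in a fiber $X_s$, the subsheaf $\Gamma_{X_s}(E)$ of sections supported set-theoretically on $X_s$ would be nonzero: its stalk at the generic point $\eta_Z$ of $Z$ is all of $E_{\eta_Z} \neq 0$, since $E_{\eta_Z}$ has finite length over the $2$-dimensional local ring $\OO_{X,\eta_Z}$ and is therefore annihilated by a power of $\mathfrak m_{\eta_Z}$, hence by a power of the ideal of $X_s$. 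But $\Gamma_{X_s}(E)$ is a fiber subsheaf of $E$, contradicting the hypothesis. Consequently $\supp (E|_s) = \supp (E) \cap X_s$ is $0$-dimensional for every $s$, so each $E|_s$ is a $0$-dimensional sheaf on $X_s$ and thus $\Psi_s$-WIT$_0$. By Lemma \ref{lemma20}, $E$ is $\Psi$-WIT$_0$, i.e.\ $E \in W_{0,X}$.

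With $E$ now known to be pure $1$-dimensional, $\Psi$-WIT$_0$, contained in $\Coh (\pi)_1$, and free of subsheaves in $\Coh (\pi)_0 = \Coh (\pi)_{\leq 0}$ (the fiber sheaves), I would apply Lemma \ref{lemma26}(ii) with $d=1$ to conclude that $\wh{E}$ is a pure sheaf of dimension $2$. At this point all three hypotheses of Lemma \ref{lemma70} hold: $E \in W_{0,X}$, the vanishing $\Hom (\Phi (\Coh^{\leq 0}(Y)),E)=0$ established above, and $\wh{E}$ pure of dimension at least $2$. The final clause of Lemma \ref{lemma70} then gives that $\wh{E}$ is reflexive, completing the proof. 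The only genuinely delicate point I anticipate is the local argument ruling out type-(i) components in the WIT$_0$ step; the remainder is an assembly of the preceding lemmas.
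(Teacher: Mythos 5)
Your proof is correct and takes essentially the same route as the paper's three-line argument: Lemma \ref{lemma75} to rule out vertical components of $\supp(E)$, so that $E \in \{\Coh^{\leq 0}(X_s)\}^\uparrow$ and is $\Psi$-WIT$_0$ via Lemma \ref{lemma20}, then Lemma \ref{lemma26}(ii) for the purity and dimension of $\wh{E}$, then Lemma \ref{lemma70} for reflexivity. Your write-up merely makes explicit what the paper leaves implicit — the purity of $E$, the local $\Gamma_{X_s}(E)$ argument excluding fiber-supported components, and the verification of the hypothesis $\Hom(\Phi(\Coh^{\leq 0}(Y)),E)=0$ of Lemma \ref{lemma70} — and in doing so it also sidesteps the paper's slip of writing ``$E \in \Phi(\{\Coh^{\leq 0}(Y_s)\}^\uparrow)$'' (a category that Remark \ref{remark21} places in $W_{1,X}$) where $E \in \{\Coh^{\leq 0}(X_s)\}^\uparrow$ is what is meant.
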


\begin{proof}
By Lemma \ref{lemma75}, we have $E \in \Phi (\{\Coh^{\leq 0}(Y_s)\}^\uparrow)$, which implies $E$ is $\Psi$-WIT$_0$ by Remark \ref{remark21}.  That $E$ has no fiber subsheaves implies $\wh{E}$ is pure of dimension 2 by Lemma \ref{lemma26}(ii).  Lemma \ref{lemma70} gives us the reflexivity of $\wh{E}$.
\end{proof}

\begin{lemma}\label{lemma24}
Suppose $X$ is an elliptic surface, and $E \in \Coh^{=2} (X) \cap W_{0,X}'$.  Then $\Lambda E \in D^{[0,1]}_{\Coh (Y)}$, and $\Lambda^1 E$ is a 0-dimensional sheaf.
\end{lemma}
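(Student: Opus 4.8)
The plan is to run the same argument as in Lemma~\ref{lemma23}, using the torsion pair $(W_{0,X}, W_{1,X})$ to split $E$ and then analysing the derived dual of each piece. First I would write down the short exact sequence \eqref{eq9}, $0 \to E_0 \to E \to E_1 \to 0$ with $E_0 \in W_{0,X}$ and $E_1 \in W_{1,X}$. Since $E \in W_{0,X}'$, Lemma~\ref{lemma29}(i) (with $i=0$) gives $E_1 \in \mathcal B_X$, and by Remark~\ref{remark8} this means $E_1$ is a fiber sheaf on the elliptic surface $X$; in particular $\dimension E_1 \leq 1$. On the other hand $E_0$ is a subsheaf of the torsion-free sheaf $E$, hence torsion-free of full support, so $E_0 \in \Coh^{=2}(X) \cap W_{0,X}$. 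Applying $\Psi$ to \eqref{eq9} and dualising yields the exact triangle \eqref{eq10}, namely $(\wh{E_1})^\vee[1] \to \Lambda E \to (\wh{E_0})^\vee \to (\wh{E_1})^\vee[2]$, exactly as in the proof of Lemma~\ref{lemma23}; the whole proof then reduces to pinning down the cohomology sheaves of the two outer terms.

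For the term $(\wh{E_0})^\vee$: since $\dimension S = 1$, the torsion-free sheaf $E_0$ automatically lies in $\Coh(\pi)_1 = \Coh(\pi)_{d-1}$ with $d=2$, so Lemma~\ref{lemma26}(i) shows that $\wh{E_0}$ is pure of dimension $2$, i.e.\ torsion-free on $Y$. For a torsion-free sheaf on the smooth surface $Y$ one has $\EExt^0(\wh{E_0},\OO_Y)$ torsion-free, $\EExt^1(\wh{E_0},\OO_Y)$ supported in dimension $0$ (it vanishes off the finite non-locally-free locus), and $\EExt^2(\wh{E_0},\OO_Y)=0$; hence $(\wh{E_0})^\vee$ has cohomology only in degrees $0$ and $1$, with the degree-$1$ part zero-dimensional. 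For the term $(\wh{E_1})^\vee[1]$: the transform $\wh{E_1}$ of the fiber sheaf $E_1$ is again a fiber sheaf by Lemma~\ref{lemma66}, so $\dimension \wh{E_1} \leq 1$ and $(\wh{E_1})^\vee$ is concentrated in degrees $1$ and $2$ with $\EExt^2(\wh{E_1},\OO_Y)$ zero-dimensional; thus $(\wh{E_1})^\vee[1]$ lives in degrees $0$ and $1$, its degree-$1$ part being the zero-dimensional sheaf $\EExt^2(\wh{E_1},\OO_Y)$.

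Finally I would take the long exact cohomology sequence of \eqref{eq10}. Since both outer terms have cohomology concentrated in degrees $0$ and $1$, so does $\Lambda E$, giving $\Lambda E \in D^{[0,1]}_{\Coh(Y)}$; in particular $\Lambda^2 E = 0$ follows from the vanishing of $\EExt^2(\wh{E_0},\OO_Y)$ together with $H^3((\wh{E_1})^\vee)=0$. At degree $1$ the sequence reads $\EExt^2(\wh{E_1},\OO_Y) \to \Lambda^1 E \to \EExt^1(\wh{E_0},\OO_Y) \to 0$, exhibiting $\Lambda^1 E$ as an extension of the zero-dimensional sheaf $\EExt^1(\wh{E_0},\OO_Y)$ by a quotient of the zero-dimensional sheaf $\EExt^2(\wh{E_1},\OO_Y)$, hence zero-dimensional. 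The one step demanding genuine care is the vanishing of $\EExt^2(\wh{E_0},\OO_Y)$ (equivalently $\Lambda^2 E = 0$), which is precisely where the purity statement of Lemma~\ref{lemma26}(i) --- and the accident that on a surface a torsion-free sheaf is forced into $\Coh(\pi)_{d-1}$ --- does the real work; everything else is a formal consequence of the triangle \eqref{eq10} and the standard $\EExt$-dimension bounds on a smooth surface.
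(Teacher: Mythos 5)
Your proof is correct and takes essentially the same route as the paper's: the same decomposition \eqref{eq9} via the torsion pair $(W_{0,X},W_{1,X})$, Lemma \ref{lemma29} to put $E_1 \in \mathcal B_X$, Lemma \ref{lemma26}(i) for the purity of $\wh{E_0}$ (hence $\EExt^2(\wh{E_0},\OO_Y)=0$ and $\EExt^1(\wh{E_0},\OO_Y)$ zero-dimensional), and the cohomology sequence of the triangle \eqref{eq10} giving the exact sequence $\EExt^2(\wh{E_1},\OO_Y) \to \Lambda^1 E \to \EExt^1(\wh{E_0},\OO_Y) \to 0$. The only cosmetic difference is that you re-derive the degree bounds on $(\wh{E_1})^\vee$ directly from the standard $\EExt$-codimension estimates for fiber sheaves, whereas the paper simply cites Lemma \ref{lemma23} applied to $E_1 \in \mathcal B_X$; your explicit verification that $E_0 \in \Coh(\pi)_1$ (so that Lemma \ref{lemma26}(i) applies) is a point the paper leaves implicit.
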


\begin{proof}
%Take any $E$ satisfying the hypotheses, and let $E_0, E_1$ be as in \eqref{eq9}.  For any general closed point $s \in S$, we have a surjection $E|_s \to E_1|_s$, where $E|_s$ is $\Psi_s$-WIT$_0$ by assumption.  Thus $E_1|_s$ is also $\Psi_s$-WIT$_0$.
By Lemma \ref{lemma29}, we have $E_1 \in \mathcal B_X$.  On the other hand, $\wh{E_0}$ is pure 2-dimensional by Lemma \ref{lemma26}.  Hence in the exact triangle \eqref{eq10}, the complex $(\wh{E_0})^\vee$ lies in $D^{[0,1]}_{\Coh (Y)}$,  while $\Lambda E_1 = (\wh{E_1})^\vee [1]$ also lies in $D^{[0,1]}_{\Coh (Y)}$ by Lemma \ref{lemma23}.  As a result,  we have $\Lambda E \in D^{[0,1]}_{\Coh (Y)}$.  In particular, we have the exact sequence
\begin{equation}\label{eq12}
\EExt^2 (\wh{E_1}, \OO_Y) \to \Lambda^1 E \to \EExt^1 ( \wh{E_0}, \OO_Y) \to 0.
\end{equation}
Since $\wh{E_0}$ is pure 2-dimensional, the sheaf $\EExt^1 ( \wh{E_0}, \OO_Y)$ is 0-dimensional, as is $\EExt^2 (\wh{E_1}, \OO_Y)$.  Hence $\Lambda^1 E$ itself is 0-dimensional.
\end{proof}

\begin{lemma}\label{lemma32}
Let $X$ be an elliptic surface, and suppose $E \in \Phi (\{ \Coh^{\leq 0}(Y_s)\}^\uparrow )$.  Then
  \begin{itemize}
  \item[(i)] $\Lambda E \in D^{[0,1]}_{\Coh (Y)}$ and $\Lambda^1 E$ is a 0-dimensional sheaf;
   \item[(ii)] $E$ is torsion-free if and only if $\Lambda E \cong \EExt^1 (\wh{E},\OO_Y)$ is a pure 1-dimensional sheaf (lying in $\{\Coh^{\leq 0}(Y_s)\}^\uparrow$) sitting at degree 0.
   \end{itemize}
\end{lemma}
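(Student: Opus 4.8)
The plan is to begin by making $\Lambda E$ completely explicit. Since $E \in \Phi(\{\Coh^{\leq 0}(Y_s)\}^\uparrow)$, Remark \ref{remark21} together with $\Psi\Phi = \mathrm{id}_{D(Y)}[-1]$ shows that $E$ is $\Psi$-WIT$_1$ with transform $\wh E \in \{\Coh^{\leq 0}(Y_s)\}^\uparrow$, so that $\Psi(E) \cong \wh E[-1]$ and hence $\Lambda E = (\wh E)^\vee[1]$. Because $S$ is a curve and $Y$ a smooth surface, every sheaf in $\{\Coh^{\leq 0}(Y_s)\}^\uparrow$ is torsion of dimension at most $1$, so $\wh E \in \Coh^{\leq 1}(Y)$. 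For part (i) I would then read off the cohomology of $(\wh E)^\vee = R\HHom(\wh E,\OO_Y)$ from the standard dimension bounds on $\EExt$-sheaves of a torsion sheaf on a smooth surface \cite{HL}: one has $\EExt^0(\wh E,\OO_Y)=0$ (no maps from a torsion sheaf to $\OO_Y$), $\dimension \EExt^1(\wh E,\OO_Y)\leq 1$, $\EExt^2(\wh E,\OO_Y)$ is $0$-dimensional, and higher $\EExt$ vanish. Shifting by one gives $\Lambda E \in D^{[0,1]}_{\Coh(Y)}$ with $\Lambda^0 E = \EExt^1(\wh E,\OO_Y)$ and $\Lambda^1 E = \EExt^2(\wh E,\OO_Y)$ $0$-dimensional, which is part (i).

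For part (ii) the key reformulation is that $\Lambda^1 E = \EExt^2(\wh E,\OO_Y)$ vanishes exactly when $\wh E$ is pure $1$-dimensional; indeed $\EExt^2(-,\OO_Y)$ detects only the maximal $0$-dimensional subsheaf, since the pure part $F$ satisfies $\EExt^{\geq 2}(F,\OO_Y)=0$ on the surface $Y$. Thus it suffices to prove that $E$ is torsion-free if and only if $\wh E$ is pure $1$-dimensional, and to check that in the pure case $\Lambda E = \EExt^1(\wh E,\OO_Y)$ is pure $1$-dimensional and lies in $\{\Coh^{\leq 0}(Y_s)\}^\uparrow$. The latter will follow from \cite{HL}: dualising a pure sheaf preserves both purity and support, while membership in $\{\Coh^{\leq 0}(Y_s)\}^\uparrow$ depends only on the support meeting each fiber in dimension $0$.

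The forward implication is straightforward. If $\wh E$ had a nonzero $0$-dimensional subsheaf $Q$, then $Q$, $\wh E$, and (being a quotient inside the torsion class) $\wh E/Q$ all lie in $\{\Coh^{\leq 0}(Y_s)\}^\uparrow$ and are $\Phi$-WIT$_0$; applying $\Phi$ to $0 \to Q \to \wh E \to \wh E/Q \to 0$ therefore yields a short exact sequence of sheaves $0 \to \Phi(Q) \to E \to \Phi(\wh E/Q) \to 0$. Since $Q$ is $0$-dimensional, $\Phi(Q)$ is a pure $1$-dimensional fiber sheaf, so $E$ acquires a torsion subsheaf, contradicting torsion-freeness.

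The reverse implication is the main obstacle, and a naive transform-of-an-injection argument breaks down. Assuming $\wh E$ is pure but $E$ has a nonzero maximal torsion subsheaf $T$, one first sees that $T$ is $\Psi$-WIT$_1$ (a subsheaf of the $\Psi$-WIT$_1$ sheaf $E$) and pure $1$-dimensional (its $0$-dimensional part would be both $\Psi$-WIT$_0$ and $\Psi$-WIT$_1$), hence a fiber sheaf by the symmetric form of Remark \ref{remark11}, with $0$-dimensional transform $\wh T$. The delicate point is that the induced map $\wh T \to \wh E$ is \emph{zero}, because its image would be a $0$-dimensional subsheaf of the pure sheaf $\wh E$; the cohomology sequence of $\Psi$ applied to $0 \to T \to E \to E/T \to 0$ then forces $\Psi^0(E/T) \cong \wh T \neq 0$. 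To close the argument I would invoke the rigidity that a $\Psi$-WIT$_0$ sheaf with $0$-dimensional transform must vanish — otherwise $\Phi$ of that transform would be both a fiber sheaf concentrated in degree $0$ and, being $\Phi\Psi$ of a sheaf, concentrated in degree $1$. Since $\Psi^0(E/T)$ is precisely the transform of the $\Psi$-WIT$_0$ part of $E/T$ and is $0$-dimensional, it must be zero, contradicting $\wh T \neq 0$ and proving $E$ torsion-free. Stating and applying this rigidity cleanly is the crux of the whole argument.
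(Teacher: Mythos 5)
Part (i), the forward implication of (ii), and the reduction of (ii) to the equivalence ``$E$ torsion-free $\iff$ $\wh{E}$ pure'' are all correct, and essentially parallel the paper's proof (the paper filters $\wh{E}$ by its maximal $0$-dimensional subsheaf $F_0$ and dualises the pieces; your $\EExt$-dimension bounds extract the same information, with $\Lambda^1 E \cong \EExt^2(\wh{E},\OO_Y)$ detecting exactly $F_0$). The genuine flaw is in the reverse implication, precisely at the unjustified assertion that the maximal torsion subsheaf $T \subseteq E$ has \emph{$0$-dimensional} transform $\wh{T}$. Nothing gives you this: the transform of a pure $1$-dimensional $\Psi$-WIT$_1$ fiber sheaf is in general a $1$-dimensional fiber sheaf, not a $0$-dimensional one. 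For instance, a line bundle of negative degree on a smooth fiber $X_s$ is $\Psi_s$-WIT$_1$ and transforms to a locally free sheaf of positive rank on $Y_s$; indeed the paper's own torsion class $\mathcal B_{Y,\ast}$ is built from $(W_{1,Y}\cap \mathcal B_Y)^D$ precisely because such transforms are $1$-dimensional. Your argument leans on the false claim twice: once to conclude that the map $\wh{T} \to \wh{E}$ is zero (``its image would be a $0$-dimensional subsheaf of the pure sheaf $\wh{E}$''), and once to feed the ``rigidity'' step, where you need $\Psi^0(E/T) \cong \wh{T}$ to be $0$-dimensional.

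Both uses can be repaired without any dimension count, which also makes the rigidity lemma unnecessary. For the vanishing of $\wh{T} \to \wh{E}$: the image is a quotient of the fiber sheaf $\wh{T}$, hence a fiber subsheaf of $\wh{E}$; its $0$-dimensional part contradicts the purity of $\wh{E}$, and a $1$-dimensional fiber subsheaf is impossible because $\wh{E} \in \{\Coh^{\leq 0}(Y_s)\}^\uparrow$ forces $\supp (\wh{E})$ to meet every fiber $Y_s$ in dimension $0$, so it cannot contain a $1$-dimensional subset of a fiber. Granting this, your exact sequence gives $\Psi^0(E/T) \cong \wh{T}$, and now one concludes directly: $\wh{T} = \Psi^1(T)$ is $\Phi$-WIT$_0$ (being $\Psi^1$ of a sheaf, given $\Phi\Psi = \mathrm{id}[-1]$), while $\Psi^0(E/T)$ is $\Phi$-WIT$_1$ (being $\Psi^0$ of a sheaf), so $\wh{T} \in W_{0,Y} \cap W_{1,Y} = 0$ and hence $T=0$. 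This is in substance how the paper closes the same step, except that it short-circuits your whole paragraph by citing \cite[Lemma 2.6]{Lo5} (the torsion subsheaf of a $\Psi$-WIT$_1$ sheaf is a $\Psi$-WIT$_1$ fiber sheaf) and then deriving a fiber subsheaf of the pure sheaf $F_1 \in \{\Coh^{\leq 0}(Y_s)\}^\uparrow$, which is excluded by the same support reasoning. With the correction above your proof goes through; as written, the step fails.
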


\begin{proof}
By Lemma \ref{lemma20}, the category $\Phi (\{ \Coh^{\leq 0}(Y_s)\}^\uparrow )$ is contained in $W_{1,X}$.  Take any  $E \in \Phi (\{ \Coh^{\leq 0}(Y_s)\}^\uparrow )$.  Then $\wh{E}$ is $\Phi$-WIT$_0$.  Consider the short exact sequence
\begin{equation}\label{eq19}
0 \to F_0 \to \wh{E} \to F_1 \to 0
 \end{equation}
 in $\Coh (Y)$ where $F_0$  is the maximal 0-dimensional subsheaf of $\wh{E}$.  Then both $F_0, F_1$ are $\Phi$-WIT$_0$, and the dimension of $F_1$ is 1 if $F_1 \neq 0$; we also have the exact triangle in $D(Y)$
\[
  F_1^\vee \to (\wh{E})^\vee \to F_0^\vee \to F_1^\vee [1].
\]
Here, $F_0^\vee$ is a 0-dimensional sheaf sitting at degree 2, while $F_1^\vee \cong \EExt^1 (F_1,\OO_Y)[-1]$.  On the other hand, we have $\Psi (E) \cong \wh{E}[-1]$, and so $\Lambda E \cong (\wh{E})^\vee [1] \in D^{[0,1]}_{\Coh (Y)}$.  Besides, the exact triangle above gives $\Lambda^1E \cong H^2 ( (\wh{E})^\vee) \cong H^2(F_0^\vee)$, which is a 0-dimensional sheaf.  Thus part (i) holds.

Now, the transform of \eqref{eq19} is a short exact sequence in $\Coh (X)$
\begin{equation}
0 \to \wh{F_0} \to E \to \wh{F_1} \to 0.
\end{equation}
Since $\wh{F_0}$ is a fiber sheaf, it must be zero when $E$ is torsion-free, in which case the argument in part (i) shows that $\Lambda E \cong \EExt^1 (\wh{E},\OO_Y)$ is a pure 1-dimensional sheaf sitting at degree 0.  %Since $\wh{E}$ lies in $\{\Coh^{\leq 0}(Y_s)\}^\uparrow$, so does $\EExt^1 (\wh{E},\OO_Y)$.

For the `if' part of part (ii), suppose that $\Lambda E \cong \EExt^1 (\wh{E},\OO_Y)$ is a pure 1-dimensional sheaf sitting at degree 0.  Then from the computation of part (i), we see that $F_0$ vanishes and $E \cong \wh{F_1}$. However, if $\wh{F_1}$  had a torsion subsheaf $F'$, then it would be a $\Psi$-WIT$_1$ fiber sheaf by \cite[Lemma 2.6]{Lo5}.  This means that $F_1$ itself would also have a fiber subsheaf, contradicting that $F_1$ is a pure 1-dimensional sheaf lying in $\{\Coh^{\leq 0}(Y_s)\}^\uparrow$.  Thus part (ii) is proved.
\end{proof}

\begin{lemma}\label{lemma28}
 Suppose $X$ is an elliptic surface and $E \in \Coh (X)$ is a pure 2-dimensional sheaf such that:
  \begin{itemize}
  \item $E \in W_{1,X}'$;
  \item $\Hom (\Phi (\{ \Coh^{\leq 0}(Y_s)\}^\uparrow ), E)=0$.
  \end{itemize}
  Then $\Lambda (E[1]) \in D^{[0,1]}_{\Coh (Y)}$  and $\Lambda^1 (E[1])$ sits in an exact sequence in $\Coh (Y)$ of the form
\[
  \EExt^1 (A,\OO_Y) \to \Lambda^1 (E[1]) \to \EExt^1 (B,\OO_Y) \to 0
\]
where $A$ is pure 2-dimensional (hence $\EExt^1 (A,\OO_Y)$ is 0-dimensional), and $B \in W_{1,Y}\cap \mathcal B_Y$ (in particular, $B$ is a pure 1-dimensional fiber sheaf).
\end{lemma}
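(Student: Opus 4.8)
The plan is to compute $\Lambda(E[1])$ explicitly as the derived dual of the transform of $E$, and then to extract $A$ and $B$ from the torsion filtration of that transform.

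First I would promote the hypothesis $E \in W_{1,X}'$ to $E \in W_{1,X}$. Decomposing $E$ by the torsion pair $(W_{0,X}, W_{1,X})$ as $0 \to E_0 \to E \to E_1 \to 0$, Lemma \ref{lemma29}(i) gives $E_0 \in \mathcal B_X$; since $X$ is an elliptic surface, $\mathcal B_X$ consists of fiber sheaves, which are torsion, so the subsheaf $E_0$ of the torsion-free sheaf $E$ must vanish. Hence $E \in W_{1,X}$, so $\Psi(E) \cong \wh E[-1]$ and $\Lambda(E[1]) = (\Psi(E)[1])^\vee = (\wh E)^\vee$. I would then transport the second hypothesis across $\Psi$: because $\Phi(\{\Coh^{\leq 0}(Y_s)\}^\uparrow) \subseteq W_{1,X}$ (Remark \ref{remark21}) and $\Psi$ is an equivalence carrying $\Psi$-WIT$_1$ sheaves to their transforms, for $G \in \{\Coh^{\leq 0}(Y_s)\}^\uparrow$ one has $\Hom(\Phi(G), E) \cong \Hom_{\Coh(Y)}(G, \wh E)$. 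Thus the hypothesis says exactly that $\wh E$ has no nonzero subsheaf in $\{\Coh^{\leq 0}(Y_s)\}^\uparrow$; in particular $\wh E$ has no $0$-dimensional subsheaf, so $\EExt^2(\wh E, \OO_Y) = 0$ and $(\wh E)^\vee \in D^{[0,1]}_{\Coh(Y)}$, with $\Lambda^1(E[1]) = \EExt^1(\wh E, \OO_Y)$.

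Next I would read off the exact sequence from the torsion filtration $0 \to B \to \wh E \to A \to 0$, where $B$ is the maximal torsion subsheaf (pure $1$-dimensional, since $\wh E$ has no $0$-dimensional subsheaf) and $A = \wh E / B$ is torsion-free, hence pure $2$-dimensional. Applying $R\HHom(-, \OO_Y)$ and using $\EExt^2(A, \OO_Y) = 0$ (as $A$ is torsion-free) produces precisely $\EExt^1(A, \OO_Y) \to \Lambda^1(E[1]) \to \EExt^1(B, \OO_Y) \to 0$; that $\EExt^1(A, \OO_Y)$ is $0$-dimensional follows because the reflexive hull of $A$ on the smooth surface $Y$ is locally free with $0$-dimensional cokernel (see \cite[Proposition 1.1.6]{HL}).

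The main obstacle is the final claim $B \in W_{1,Y} \cap \mathcal B_Y$. The subtlety is that $B$, though a subsheaf of the $\Phi$-WIT$_0$ sheaf $\wh E$, need not itself be $\Phi$-WIT$_0$, since $W_{0,Y}$ is a torsion class and so is not closed under subobjects. I would instead prove that $\wh E$ has no nonzero subsheaf $M$ lying in $W_{0,Y} \cap \Coh^{\leq 1}(Y)$. For such an $M$ (necessarily pure $1$-dimensional), decompose it by the torsion pair $(\{\Coh^{\leq 0}(Y_s)\}^\uparrow, (\{\Coh^{\leq 0}(Y_s)\}^\uparrow)^\circ)$ of Lemma \ref{lemma17}; its torsion subobject is a subsheaf of $\wh E$ in $\{\Coh^{\leq 0}(Y_s)\}^\uparrow$, hence zero, so $M \in (\{\Coh^{\leq 0}(Y_s)\}^\uparrow)^\circ$. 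By Lemma \ref{lemma75} this forces $\supp M$ to contain no component dominating $S$—any such component would carry a subsheaf of $M$ restricting to a $0$-dimensional sheaf on every fiber, i.e. lying in $\{\Coh^{\leq 0}(Y_s)\}^\uparrow$—so $M$ is a fiber sheaf. Then $M \in W_{0,Y} \cap \mathcal B_Y$, and transforming $M \hookrightarrow \wh E$ under $\Phi$ (both terms being $\Phi$-WIT$_0$) yields an injection $\wh M \hookrightarrow \Phi(\wh E) = E$; since $\wh M$ is a fiber sheaf (Lemma \ref{lemma66}) and $E$ is torsion-free, $M = 0$. Applying this to the maximal $\Phi$-WIT$_0$ subsheaf of $B$ shows $B \in W_{1,Y}$, and a pure $1$-dimensional $\Phi$-WIT$_1$ sheaf is a fiber sheaf by Remark \ref{remark11}, giving $B \in W_{1,Y} \cap \mathcal B_Y$ as required.
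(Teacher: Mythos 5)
Your proposal is correct and follows essentially the same route as the paper's proof: reduce to $E \in W_{1,X}$ via Lemma \ref{lemma29} and purity, take the maximal torsion subsheaf of $\wh{E}$ as $B$ (the paper's $T$), dualise the resulting short exact sequence, and use the transported hypothesis $\Hom(\{\Coh^{\leq 0}(Y_s)\}^\uparrow, \wh{E})=0$ together with Lemma \ref{lemma75} and the WIT formalism to conclude $B \in W_{1,Y} \cap \mathcal B_Y$. The only cosmetic difference is the order of the last two steps (you kill the $\Phi$-WIT$_0$ part first and deduce fiber-sheafness from Remark \ref{remark11}, while the paper deduces fiber-sheafness from Lemma \ref{lemma75} first and then kills the WIT$_0$ part), which does not change the substance of the argument.
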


\begin{proof}
Let $E_0, E_1$ be as in  \eqref{eq9}.  By Lemma \ref{lemma29}, we know $E_0 \in \mathcal B_X$.  However, we are assuming $E$ to be pure, and so $E=E_1$,  and $\Lambda (E[1]) \cong (\wh{E_1})^\vee$.

Suppose $T$ is the maximal torsion subsheaf of $\wh{E_1}$.  Note that, $\wh{E_1}$ has no 0-dimensional subsheaves, or else there would be a nonzero map from a fiber sheaf to $E_1$, contradicting the purity of $E$.  Therefore, $T$ must be a pure 1-dimensional sheaf if it is nonzero, and we have the short exact sequence in $\Coh (Y)$
\begin{equation*}
0 \to T \to \wh{E_1} \to \wh{E_1}/T \to 0
\end{equation*}
where $\wh{E_1}/T$ is a pure 2-dimensional sheaf.  We thus obtain an exact triangle in $D(Y)$
\[
  (\wh{E_1}/T)^\vee \to (\wh{E_1})^\vee \to T^\vee \to (\wh{E_1}/T)^\vee[1].
\]
Since $\wh{E_1}/T$ is pure 2-dimensional, the sheaf $\EExt^2 (\wh{E_1}/T,\OO_Y)$ vanishes, and so $(\wh{E_1}/T)^\vee$ has cohomology only in degrees 0 and 1.  Also, since $T$ is pure 1-dimensional, we have $\EExt^0 (T,\OO_Y)=0=\EExt^2 (T,\OO_Y)$.  Hence $T^\vee$ is a pure 1-dimensional sheaf sitting at degree 1.  Thus $\Lambda (E[1]) \cong (\wh{E_1})^\vee$ is a 2-term complex sitting in degrees 0 and 1, and we have the exact sequence in $\Coh (Y)$
\begin{equation}\label{eq15}
\EExt^1 (\wh{E_1}/T,\OO_Y) \to H^1 ( (\wh{E_1})^\vee) \to \EExt^1 (T,\OO_Y) \to 0.
\end{equation}
where $\EExt^1 (\wh{E_1}/T,\OO_Y)$ is a 0-dimensional sheaf.

To finish off the proof, observe that $T$ has no nonzero subsheaves lying in the category $\{\Coh^{\leq 0}(Y_s)\}^\uparrow$.  For, if $T$ had such a subsheaf $T'$, then the image of the composition $T' \hookrightarrow T \hookrightarrow \wh{E_1}=\wh{E}$ under $\Phi$ would give us a nonzero element in $\Hom (\Phi (\{ \Coh^{\leq 0}(Y_s)\}^\uparrow ), E)$, contradicting our assumption.  Thus $T$ must be a fiber sheaf by Lemma \ref{lemma75}.  Since $E=E_1$ is pure 2-dimensional, we have    $\Hom (\mathcal B_Y \cap W_{0,Y}, \wh{E_1})=0$, meaning the $\Phi$-WIT$_0$ part of $T$ vanishes, i.e.\ $T$ is $\Phi$-WIT$_1$ as claimed.
\end{proof}

Pulling the above results together, we can now characterise the image of the heart $\mathfrak C_X$ under the functor $\Lambda$:

\begin{proposition}\label{pro1}
Let $X$ be an elliptic surface.  Then for any $E \in \mathfrak C_X$, we have:
\begin{itemize}
\item[(i)] $\Lambda E \in D^{[0,1]}_{\Coh (Y)}$;
\item[(ii)] $\Lambda^1 E \in \mathcal B_{Y,\ast}$.
\end{itemize}
\end{proposition}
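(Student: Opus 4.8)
The plan is to decompose a general object of $\mathfrak{C}_X$ into the building blocks already handled in Lemmas \ref{lemma23}, \ref{lemma22}, \ref{lemma24}, \ref{lemma32} and \ref{lemma28}, and to reassemble the conclusions using that $D^{[0,1]}_{\Coh(Y)}$ is closed under extensions in $D(Y)$ and that $\mathcal{B}_{Y,\ast}$ is closed under quotients and extensions (Lemma \ref{lemma30}). Given $E \in \mathfrak{C}_X$, the torsion pair $(\mathfrak{T}_X, \mathfrak{T}_X^\circ)$ gives the short exact sequence $0 \to H^{-1}(E)[1] \to E \to H^0(E) \to 0$ in $\mathfrak{C}_X$, with $H^0(E) \in \mathfrak{T}_X$ and $H^{-1}(E) \in \mathfrak{T}_X^\circ$. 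Regarding it as a triangle and applying the contravariant triangulated functor $\Lambda$ produces the triangle
\[
\Lambda(H^0(E)) \to \Lambda E \to \Lambda(H^{-1}(E)[1]) \to \Lambda(H^0(E))[1],
\]
so it suffices to establish (i) and (ii) for $\Lambda(H^0(E))$ and for $\Lambda(H^{-1}(E)[1])$ separately and then pass to the long exact cohomology sequence of this triangle.

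For the $\mathfrak{T}_X$-part, I would write $\mathfrak{T}_X = \langle W_{0,X}', \Phi(\{\Coh^{\leq 0}(Y_s)\}^\uparrow)\rangle$ and use the extension-closure of $D^{[0,1]}_{\Coh(Y)}$ and $\mathcal{B}_{Y,\ast}$ to reduce to the two generating subcategories. An object of $W_{0,X}'$ splits along its maximal torsion subsheaf into a pure 2-dimensional torsion-free quotient lying in $\Coh^{=2}(X) \cap W_{0,X}'$ (Lemma \ref{lemma24}) and a torsion subsheaf; the latter splits along its maximal fiber subsheaf into a part in $\mathcal{B}_X$ (Lemma \ref{lemma23}) and a part in $\mathcal{T}_X \cap \mathcal{B}_X^\circ$ (Lemma \ref{lemma22}), while the other generator $\Phi(\{\Coh^{\leq 0}(Y_s)\}^\uparrow)$ is covered by Lemma \ref{lemma32}. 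In all four cases $\Lambda$ lands in $D^{[0,1]}_{\Coh(Y)}$. For (ii), $\Lambda^1$ is 0-dimensional---hence in $\Coh^{\leq 0}(Y) \subseteq \mathcal{B}_{Y,\ast}$---in the cases of Lemmas \ref{lemma24}, \ref{lemma22} and \ref{lemma32}, whereas in the $\mathcal{B}_X$ case Lemma \ref{lemma23}(ii) exhibits $\Lambda^1$ as an extension of $\EExt^1(\widehat{E_0},\OO_Y)$ by the 0-dimensional $\EExt^2(\widehat{E_1},\OO_Y)$; since $\widehat{E_0}$ is a pure 1-dimensional $\Phi$-WIT$_1$ fiber sheaf, $\EExt^1(\widehat{E_0},\OO_Y)$ is of the form $(W_{1,Y}\cap \mathcal{B}_Y)^D$, a generator of $\mathcal{B}_{Y,\ast}$ by Remark \ref{remark8}.

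For the $\mathfrak{T}_X^\circ$-part, the key point is that any $F \in \mathfrak{T}_X^\circ$ satisfies the hypotheses of Lemma \ref{lemma28}: as $\mathfrak{T}_X$ contains every torsion sheaf, $F$ is torsion-free, hence pure 2-dimensional on the surface; $F \in W_{1,X} \subseteq W_{1,X}'$ by Lemma \ref{lemma47}(ii); and $\Hom(\Phi(\{\Coh^{\leq 0}(Y_s)\}^\uparrow), F)=0$ because $\Phi(\{\Coh^{\leq 0}(Y_s)\}^\uparrow) \subseteq \mathfrak{T}_X$. Lemma \ref{lemma28} then yields $\Lambda(F[1]) \in D^{[0,1]}_{\Coh(Y)}$ and presents $\Lambda^1(F[1])$ as an extension of $\EExt^1(B,\OO_Y)$, with $B \in W_{1,Y}\cap \mathcal{B}_Y$ again giving a term of type $(W_{1,Y}\cap \mathcal{B}_Y)^D$, by a quotient of the 0-dimensional $\EExt^1(A,\OO_Y)$; hence $\Lambda^1(F[1]) \in \mathcal{B}_{Y,\ast}$.

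Finally, I would run the long exact sequence of the triangle. As both $\Lambda(H^0(E))$ and $\Lambda(H^{-1}(E)[1])$ lie in $D^{[0,1]}_{\Coh(Y)}$, so does $\Lambda E$, giving (i); the tail of the sequence reads $\Lambda^1(H^0(E)) \to \Lambda^1 E \to \Lambda^1(H^{-1}(E)[1]) \to 0$, so $\Lambda^1 E$ is an extension of $\Lambda^1(H^{-1}(E)[1]) \in \mathcal{B}_{Y,\ast}$ by a quotient of $\Lambda^1(H^0(E)) \in \mathcal{B}_{Y,\ast}$, and closure of $\mathcal{B}_{Y,\ast}$ under quotients and extensions gives (ii). I expect the main obstacle to be exactly this last bookkeeping: because $\Lambda$ is contravariant and $\mathcal{B}_{Y,\ast}$ is closed only under quotients and extensions (not subobjects), one must check that each $\EExt^1$/$\EExt^2$ contribution enters the long exact sequences in the right slot---as a quotient or as extension data rather than as a subobject---and that the torsion/fiber/horizontal splitting really does match every degree-1 contribution to one of the two generating types of $\mathcal{B}_{Y,\ast}$.
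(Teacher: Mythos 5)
Your proposal is correct and follows essentially the same route as the paper: the same torsion-pair triangle for $E \in \mathfrak{C}_X$, the same filtration of a $W_{0,X}'$-sheaf through the chain $\mathcal{B}_X \subseteq \mathcal{T}_X \subseteq W_{0,X}'$ reducing to Lemmas \ref{lemma23}, \ref{lemma22}, \ref{lemma24}, \ref{lemma32}, the same verification (via Lemma \ref{lemma47}(ii)) that objects of $\mathfrak{T}_X^\circ$ satisfy the hypotheses of Lemma \ref{lemma28}, and reassembly via long exact sequences. Your explicit bookkeeping for part (ii) --- checking that each $\EExt$ term enters as a quotient or as extension data, compatibly with $\mathcal{B}_{Y,\ast}$ being closed under quotients and extensions --- correctly fills in what the paper compresses into ``Part (ii) follows from the computations in the lemmas.''
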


\begin{proof}
We have the following inclusions of torsion classes in $\Coh (X)$:
\[
  \mathcal B_X \subseteq \mathcal T_X = \Coh^{\leq 1}(X) \subseteq W_{0,X}'.
\]
Given any $E \in W_{0,X}'$, we can first find a short exact sequence in $\Coh (X)$
\[
 0 \to E^0 \to E \to E^1 \to 0
\]
where $E^0 \in \Coh^{\leq 1}(X)$ and $E^1 \in \Coh^{=2}(X) \cap W_{0,X}'$,  and then another short exact sequence in $\Coh (X)$
\[
 0 \to E^{0,0} \to E^0 \to E^{0,1} \to 0
\]
where $E^{0,0} \in \mathcal \mathcal B_X$ and $E^{0,1} \in \Coh^{\leq 1}(X) \cap \mathcal B_X^\circ$.  Setting $E'' := E^{0,0}$ and $E' := E^{0}$, we obtain a filtration in $\Coh (X)$
\[
  E'' \subseteq E' \subseteq E
\]
where $E'' \in \mathcal B_X$, $E'/E'' \in  \mathcal T_X \cap \mathcal B_X^\circ$ and $E/E' \in W_{0,X}' \cap \Coh^{=2}(X)$.  Since $\mathfrak T_X$ is defined as the extension closure $\langle W_{0,X}', \Phi (\{\Coh^{\leq 0}(Y_s)\}^\uparrow)\rangle$, Lemmas \ref{lemma23}, \ref{lemma22}, \ref{lemma24} and \ref{lemma32} together imply $\Lambda (\mathfrak T_X) \subset D^{[0,1]}_{\Coh (Y)}$.

Now, by the definition of $\mathfrak T_X$ and Lemma \ref{lemma47}(ii), we see that any object in $(\mathfrak T_X)^\circ$ satisfies the hypotheses of Lemma \ref{lemma28}.  This gives us $\Lambda (\mathfrak T_X^\circ [1]) \subset D^{[0,1]}_{\Coh (Y)}$.  Together with the last paragraph, we now have $\Lambda (\mathfrak C_X) \subset D^{[0,1]}_{\Coh (Y)}$, i.e.\ we have part (i) of the proposition.  Part (ii) of the proposition follows from the computations in Lemmas \ref{lemma23}, \ref{lemma22}, \ref{lemma24}, \ref{lemma32} and \ref{lemma28}.
\end{proof}

We now have the following theorem:

\begin{theorem}\label{theorem1}
When $X$ is a smooth elliptic surface,  the functor $\Lambda (-) := (\Psi (-))^\vee$  induces an equivalence between the t-structure $(D^{\leq 0}_{\mathfrak C}, D^{\geq 0}_{\mathfrak C})$ on $D(X)$, and the t-structure $(D^{\leq 0}_{\mathfrak D}, D^{\geq 0}_{\mathfrak D})$ on $D(Y)$.  Equivalently, $\Lambda$ induces an equivalence of hearts

\begin{equation}\label{eq18}
   \mathfrak C_X \overset{\thicksim}{\to} \mathfrak D_Y [-1].
\end{equation}
\end{theorem}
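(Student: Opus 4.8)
The plan is to realise $\Lambda$ as a contravariant equivalence of triangulated categories and then to pin down exactly where it sends the two hearts. Since $\Psi \colon D(X) \to D(Y)$ is an equivalence and, $Y$ being a smooth projective surface, the derived dual $(-)^\vee = \HHom(-,\OO_Y)$ is a contravariant auto-equivalence of $D(Y)$, the composite $\Lambda = (\Psi(-))^\vee$ is a contravariant equivalence with quasi-inverse $\Lambda^{-1}(-) = \Phi((-)^\vee)[1]$ (using $\Phi\Psi \cong \mathrm{id}[-1]$, so that $\Psi^{-1} = \Phi[1]$). Because of this, the theorem reduces to the two inclusions $\Lambda(\mathfrak C_X) \subseteq \mathfrak D_Y[-1]$ and $\Lambda^{-1}(\mathfrak D_Y[-1]) \subseteq \mathfrak C_X$: applying $\Lambda^{-1}$ to the first and combining with the second forces $\Lambda^{-1}(\mathfrak D_Y[-1]) = \mathfrak C_X$, whence $\mathfrak D_Y[-1] = \Lambda(\mathfrak C_X)$ and $\Lambda$ restricts to an equivalence $\mathfrak C_X \xrightarrow{\sim} \mathfrak D_Y[-1]$; equivalently $\Lambda$ exchanges the two t-structures.

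For the forward inclusion, recall that $F \in \mathfrak D_Y[-1]$ means $H^0(F) \in \mathcal B_{Y,\ast}^\circ$ and $H^1(F) \in \mathcal B_{Y,\ast}$, all other cohomology vanishing. Proposition \ref{pro1} already supplies two-thirds of this: $\Lambda E \in D^{[0,1]}_{\Coh (Y)}$ and $\Lambda^1 E \in \mathcal B_{Y,\ast}$ for every $E \in \mathfrak C_X$. It remains to check $\Lambda^0 E \in \mathcal B_{Y,\ast}^\circ$, which I would read off the computations already in hand. Filtering $E$ as in the proof of Proposition \ref{pro1} and invoking Lemmas \ref{lemma23}, \ref{lemma22}, \ref{lemma24}, \ref{lemma32} and \ref{lemma28}, each graded piece contributes a degree-$0$ sheaf lying in $\mathcal B_{Y,\ast}^\circ$: when $\Lambda^0$ is torsion-free this is immediate, since $\mathcal B_{Y,\ast}$ consists of torsion sheaves on the surface $Y$; and the $\mathcal B_X$-piece gives a pure $1$-dimensional $\Phi$-WIT$_1$ sheaf $\EExt^1(\widehat{E_1},\OO_Y)$, for which the WIT-swapping duality $(-)^D$ on pure $1$-dimensional sheaves together with the orthogonality $\Hom(W_{0,Y},W_{1,Y})=0$ forces $\Hom(\mathcal B_{Y,\ast},\Lambda^0 E)=0$. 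As $\mathcal B_{Y,\ast}^\circ$ is a torsion-free class (closed under subobjects and extensions), assembling over the filtration yields $\Lambda(\mathfrak C_X) \subseteq \mathfrak D_Y[-1]$.

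For the backward inclusion I would run the mirror-image analysis for $\Lambda^{-1}(-) = \Phi((-)^\vee)[1]$. Given $G \in \mathfrak D_Y[-1]$, I would dualise to spread $G^\vee$ over degrees, apply $\Phi$ (which, like $\Psi$, sends sheaves on the fibration to two-term complexes since $\wh{\pi}$ has relative dimension $1$), and track cohomology using the $Y$-side analogues of Lemma \ref{lemma26} and the reflexivity statements, together with $\Phi\Psi \cong \mathrm{id}[-1]$. The goal is $\Lambda^{-1}G \in D^{[-1,0]}$ with $H^{-1}(\Lambda^{-1}G) \in \mathfrak T_X^\circ$ and $H^0(\Lambda^{-1}G) \in \mathfrak T_X$, i.e. $\Lambda^{-1}G \in \mathfrak C_X = \langle \mathfrak T_X^\circ[1],\mathfrak T_X\rangle$; concretely, the $\mathcal B_{Y,\ast}$-part of $G$ should transform into the $\mathfrak T_X$-summand while the $\mathcal B_{Y,\ast}^\circ$-part feeds the $\mathfrak T_X^\circ[1]$-summand.

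The main obstacle is precisely this backward inclusion. Unlike a genuinely symmetric Fourier--Mukai statement, the two hearts here are built from different kinds of torsion classes — $\mathfrak C_X$ from the WIT-type class $\mathfrak T_X$ and $\mathfrak D_Y$ from the dualised class $\mathcal B_{Y,\ast}$ — so the $X$-side computations of Lemmas \ref{lemma23}--\ref{lemma28} do not transfer verbatim. One must re-derive the cohomological behaviour of the composite $\Phi \circ (-)^\vee$ on objects of $\mathfrak D_Y[-1]$, the delicate point being to verify that the $\EExt$-sheaves produced by dualising a $\mathcal B_{Y,\ast}^\circ$-sheaf land in $\mathfrak T_X$ and those from a $\mathcal B_{Y,\ast}$-sheaf land in $\mathfrak T_X^\circ$, so that the duality correctly swaps the two torsion classes under $\Phi$.
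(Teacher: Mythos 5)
Your proposal stalls exactly where you flag the ``main obstacle'', and that obstacle is a genuine gap: the backward inclusion $\Lambda^{-1}(\mathfrak D_Y[-1]) \subseteq \mathfrak C_X$ is never argued --- you only describe what the dualised $\mathcal B_{Y,\ast}$- and $\mathcal B_{Y,\ast}^\circ$-parts ``should'' do under $\Phi\circ(-)^\vee$. Moreover, even your forward inclusion is incomplete: Proposition \ref{pro1} gives only $\Lambda E \in D^{[0,1]}_{\Coh (Y)}$ and $\Lambda^1 E \in \mathcal B_{Y,\ast}$, and the remaining condition $\Lambda^0 E \in \mathcal B_{Y,\ast}^\circ$ is asserted via a sketched ``WIT-swapping duality''. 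Making that sketch precise is not routine: the transform $\wh{E_1}$ of a WIT$_1$ fiber sheaf need not be pure (it can have $0$-dimensional pieces, since those are always WIT$_0$), so you would have to split those off before dualising, and you would need to establish a statement of the form $\Hom (A^D, B^D) \cong \Hom (B,A)$ for pure $1$-dimensional sheaves on the surface to run the orthogonality $\Hom (W_{0,Y}, W_{1,Y})=0$; none of this is carried out. One further remark: had the forward inclusion been complete, your backward inclusion would have been redundant anyway, since an inclusion of hearts of bounded t-structures on the same triangulated category is automatically an equality --- so your two-inclusion scheme is both more than needed and, as written, less than proved.

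The paper's proof is structured precisely to avoid both of your hard steps. Since $\Lambda$ is a (contravariant) equivalence, $\Lambda (\mathfrak C_X)$ is automatically the heart of a bounded t-structure on $D(Y)$; because it lies in $D^{[0,1]}_{\Coh (Y)}$ by Proposition \ref{pro1}(i), it must be of the form $\langle \mathcal F[1], \mathcal T\rangle [-1]$ for some torsion pair $(\mathcal T, \mathcal F)$ in $\Coh (Y)$ (the standard classification of hearts lying in a length-two window of the standard heart; compare the use of \cite[Proposition 2.3.2(b)]{BMTI} in the threefold case). Proposition \ref{pro1}(ii) gives $\mathcal T \subseteq \mathcal B_{Y,\ast}$, and since a torsion class determines its torsion pair, the entire theorem reduces to the single inclusion $\mathcal B_{Y,\ast} \subseteq \mathcal T$: one must exhibit each generator of $\mathcal B_{Y,\ast}$ as $\Lambda^1$ of an object of $\mathfrak C_X$. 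This is done by two one-line computations: $\OO_y \cong \Lambda^1 (\wh{\OO_y})$ with $\wh{\OO_y} \in \mathcal B_X \subset \mathfrak C_X$, and, for $E \cong \EExt^1_Y(F,\OO_Y)$ with $F \in W_{1,Y}\cap \mathcal B_Y$, the computation $\Lambda (\wh{F}) \cong F^\vee \cong E[-1]$ with $\wh{F} \in \mathcal B_X \subset \mathfrak C_X$. In particular the condition $\Lambda^0 E \in \mathcal B_{Y,\ast}^\circ$, which you try to verify sheaf by sheaf along the filtration, comes out for free: once $\mathcal T = \mathcal B_{Y,\ast}$, the torsion-free class is forced to be $\mathcal B_{Y,\ast}^\circ$. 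You should restructure your argument along these lines rather than attempting the mirror-image analysis of $\Lambda^{-1}$ on $\mathfrak D_Y[-1]$.
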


\begin{proof}
By Remark \ref{remark5} and Lemma \ref{lemma31}, the two categories $\mathfrak C_X, \mathfrak D_Y$ are both hearts of t-structures.
Proposition \ref{pro1}(i) shows that the functor $\Lambda$ takes $\mathfrak C_X$ to a heart of the form  $\langle \mathcal F[1], \mathcal T\rangle [-1]$, for some torsion pair $(\mathcal T, \mathcal F)$  in $\Coh (Y)$.  Moreover, Proposition \ref{pro1}(ii) shows that $\mathcal T \subseteq \mathcal B_{Y,\ast}$.  Therefore, to prove the theorem, it remains to show that $\mathcal B_{Y,\ast}\subseteq \mathcal T$. That is, it remains to show that every object $E \in \mathcal B_{Y,\ast}$ appears as the degree-1 cohomology of $\Lambda E'$ for some $E' \in \mathfrak C_X$.  Furthermore, from the construction of $\mathcal B_{Y,\ast}$ and Remark \ref{remark8}, it is enough to consider the following two cases:
\begin{itemize}
\item[(1)] when $E \cong \OO_y$ is a skyscraper sheaf of length 1 supported at a closed point $y \in Y$;
\item[(2)] when $E \in (W_{1,Y} \cap \mathcal B_Y)^D$.
\end{itemize}
In case (1), observe that $\wh{\OO_y} \in \mathcal B_X \subset \mathfrak C_X$, and $\Lambda (\wh{\OO_y}) \cong (\OO_y [-1])^\vee \cong \OO_y^\vee [1] \cong \OO_y [-1]$.  This shows that $\OO_y \in \mathcal T$.

In case (2), suppose $E \in (W_{1,Y}\cap \mathcal B_Y)^D$.  Then $E \cong \EExt^1_Y (F,\OO_Y)$ for some  $F \in W_{1,Y} \cap \mathcal B_Y$.  Then $\wh{F} \in \mathcal B_X \subset \mathfrak C_X$, and $\Lambda \wh{F} \cong F^\vee \cong \EExt^1 (F,\OO_Y)[-1] = E[-1]$, showing $E \in\mathcal T$.  This completes the proof of the theorem.
\end{proof}

In \cite[Section 3.3]{YosPII}, Yoshioka considered a torsion pair $(\overline{\mathfrak{T}}, \overline{\mathfrak{F}})$ in a category of perverse sheaves.  When the category of perverse sheaves coincides with $\Coh (X)$, the torsion class $\overline{\mathfrak{T}}$ in $\Coh (X)$ is the category of objects $E\in\Coh (X)$ such that in its relative Harder-Narasimhan filtration with respect to $\pi$ (see, for instance, \cite[(3.5)]{YosPII}), all the subfactors satisfy the inequality $\mu_f \geq 0$.  Here, $\mu_f$ is a slope function defined as follows: writing $f$ to denote the fiber class of the morphism $\pi$, and for any $F \in \Coh (X)$, we set
\[
  \mu_f (F) :=
  \begin{cases}
  \frac{c_1(F)\cdot f}{\rank {F}} &\text{ if $\rank (F)>0$}, \\
  +\infty &\text{ if $\rank (F)=0$}.
  \end{cases}
\]

When $\pi : X \to S$ is a smooth elliptic surface with a section and integral fibers, it has a relative compactified Jacobian $\wh{\pi} : Y \to S$ that is a fine moduli space and a universal `Poincar\'{e}' sheaf. For instance, $\wh{\pi}$ can be taken to be the Altman-Kleiman compactification \cite[Remark 6.33]{FMNT}.  Under this setting, Theorem \ref{theorem1} can be considered as a special case (the `untwisted' case, and where there are no singularities to resolve) of \cite[Proposition 3.3.6]{YosPII}, in the following precise sense:

\begin{lemma}\label{lemma76}
Suppose $\pi : X \to S$ is a smooth elliptic surface with a section and integral fibers, and  $\wh{\pi} : Y \to S$ is a compactification of the relative Jacobian of $X$, and $\Psi : D(X) \to D(Y)$ is the relative Fourier-Mukai transform with the Poincar\'{e} sheaf as the kernel.  Then the torsion classes $\overline{\mathfrak{T}}$ and $\mathfrak{T}_X$ in $\Coh (X)$ coincide.
\end{lemma}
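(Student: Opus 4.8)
The plan is to compare the two torsion classes after restriction to the generic fibre of $\pi$, which (because $\pi$ has a section) is an honest elliptic curve $X_\eta$ over $K:=k(S)$ carrying a relative Fourier--Mukai transform $\Psi_\eta$. The engine of the proof is the classical slope dictionary for $\Psi_\eta$ on the elliptic curve $X_\eta$ (cf.\ \cite[Corollary 3.29]{FMNT}): a sheaf on $X_\eta$ is $\Psi_\eta$-WIT$_0$ exactly when all its Harder--Narasimhan slopes are strictly positive (torsion counting as slope $+\infty$), is $\Psi_\eta$-WIT$_1$ exactly when all its slopes are $\leq 0$, and $\Phi_\eta$ carries the zero-dimensional sheaves on $Y_\eta$ onto the slope-zero semistable sheaves on $X_\eta$. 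Under this dictionary, membership in $\overline{\mathfrak{T}}$ becomes the single condition that $E|_\eta$ have all HN-slopes $\geq 0$: restricting the relative HN filtration of $E$ to $X_\eta$ yields the HN filtration of $E|_\eta$, and $\mu_f$ of a subfactor equals the slope of its restriction.

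First I would settle the inclusion $\mathfrak{T}_X\subseteq\overline{\mathfrak{T}}$. Since the condition ``$E|_\eta$ has all HN-slopes $\geq 0$'' cuts out a torsion class on $X_\eta$ and restriction to $X_\eta$ is exact, $\overline{\mathfrak{T}}$ is closed under extensions, so by \eqref{eq23} it suffices to place the two generators inside $\overline{\mathfrak{T}}$. For $E\in W_{0,X}'$ the restriction $E|_\eta$ is $\Psi_\eta$-WIT$_0$, hence has all slopes $>0\geq 0$; for $E\in\Phi(\{\Coh^{\leq 0}(Y_s)\}^\uparrow)$ the base change \eqref{eq57} identifies $E|_\eta$ with $\Phi_\eta$ of a zero-dimensional sheaf, which is slope-zero semistable, so again all slopes are $\geq 0$.

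For the reverse inclusion I would exploit that both classes are torsion classes (the latter by Lemma \ref{lemma31}(iii)). Given $\mathfrak{T}_X\subseteq\overline{\mathfrak{T}}$, equality follows once $\overline{\mathfrak{T}}\cap\mathfrak{T}_X^\circ=0$ is established: decomposing any $E\in\overline{\mathfrak{T}}$ by the torsion pair $(\mathfrak{T}_X,\mathfrak{T}_X^\circ)$, the quotient part lies in $\overline{\mathfrak{T}}\cap\mathfrak{T}_X^\circ$ (as $\overline{\mathfrak{T}}$ is closed under quotients), and its vanishing forces $E\in\mathfrak{T}_X$. So take $E\in\overline{\mathfrak{T}}\cap\mathfrak{T}_X^\circ$. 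By Lemma \ref{lemma47}(ii), $E$ is $\Psi$-WIT$_1$, and by Lemma \ref{lemma31}(ii) it is torsion-free; in particular $E|_\eta$ is $\Psi_\eta$-WIT$_1$, so all its slopes are $\leq 0$. Combined with $E\in\overline{\mathfrak{T}}$ this forces $E|_\eta$ to be semistable of slope $0$, whence its transform restricts on the generic fibre to a zero-dimensional sheaf which is nonzero as soon as $E\neq 0$.

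The final and most delicate step is to conclude that such an $E$ vanishes. Since $\Phi(\{\Coh^{\leq 0}(Y_s)\}^\uparrow)\subseteq\mathfrak{T}_X$, we have $\Hom(\Phi(\{\Coh^{\leq 0}(Y_s)\}^\uparrow),E)=0$; transporting this through the equivalence $\Psi$ and using that both $E$ and the objects $\Phi(G)$ are WIT sheaves yields $\Hom_Y(\{\Coh^{\leq 0}(Y_s)\}^\uparrow,\widehat{E})=0$, i.e.\ $\widehat{E}$ has no nonzero subsheaf that is fibrewise zero-dimensional. I would then contradict $\widehat{E}|_\eta\neq 0$ as follows: $\widehat{E}$ is at most one-dimensional with nonzero, zero-dimensional generic-fibre restriction, so its support has an irreducible component $C$ dominating $S$, which by Lemma \ref{lemma75} meets every fibre in dimension $0$; the subsheaf $\mathcal{H}^0_C(\widehat{E})$ of sections supported on $C$ is then nonzero (the generic point of $C$ is an associated point) and lies in $\{\Coh^{\leq 0}(Y_s)\}^\uparrow$, a contradiction. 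Hence $\widehat{E}=0$, so $E=0$, completing the argument. I expect this last support-theoretic step --- producing a fibrewise zero-dimensional subsheaf on $Y$ from a purely generic, slope-zero condition on $X$ while correctly discarding any vertical components --- to be the main obstacle, together with the bookkeeping of the fibrewise WIT/slope dictionary over the non-closed base point $\eta$.
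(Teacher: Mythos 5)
Your proof is correct, and it runs on the same engine as the paper's: the fibrewise slope/WIT dictionary of \cite[Corollary 3.29]{FMNT} together with the horizontal/vertical dichotomy of Lemma \ref{lemma75}. The differences are in the packaging. First, the paper works at \emph{general closed points} throughout (where the cited dictionary is stated), extracting the slope information from the relative HN subfactors directly: for $\mathfrak T_X\subseteq\overline{\mathfrak T}$ it notes $\mu_f\geq 0$ on $\mathfrak T_X$ and uses quotient-closedness to force the right-most (hence every) relative HN subfactor to have $\mu_f\geq 0$; you instead characterise $\overline{\mathfrak T}$ at the generic fibre, which makes base change clean (restriction to $X_\eta$ is flat, so no generic-flatness bookkeeping) but requires the WIT/slope dictionary over the non-closed field $k(S)$ --- a standard faithfully-flat descent point, since HN filtrations and the Fourier--Mukai transform commute with field extension, and you rightly flag it. Second, for the hard inclusion the paper decomposes $E\in\overline{\mathfrak T}$ by the torsion pair $(W_{0,X}',(W_{0,X}')^\circ)$ and \emph{constructively} shows the quotient $E''$ lies in $\langle \mathcal B_X, \Phi(\{\Coh^{\leq 0}(Y_s)\}^\uparrow)\rangle\subseteq\mathfrak T_X$: all relative HN slopes of $E''$ are forced to be $0$, so $\wh{E''}\in\Coh^{\leq 1}(Y)$, and Lemma \ref{lemma75} splits $\wh{E''}$ into a subsheaf in $\{\Coh^{\leq 0}(Y_s)\}^\uparrow$ and a quotient in $\mathcal B_Y$, all $\Phi$-WIT$_0$, which transform back to the desired extension. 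You instead decompose by $(\mathfrak T_X,\mathfrak T_X^\circ)$ and kill the residual piece $F$ by contradiction, transporting $\Hom(\Phi(\{\Coh^{\leq 0}(Y_s)\}^\uparrow),F)=0$ to $\Hom(\{\Coh^{\leq 0}(Y_s)\}^\uparrow,\wh F)=0$ (valid as you use it, since the objects involved are $\Phi$-WIT$_0$ resp.\ $\Psi$-WIT$_1$ and $\Phi\Psi=\mathrm{id}[-1]$) and producing the horizontal subsheaf $\Gamma_C(\wh F)$ from the associated point at the generic point of a dominating component $C$; note that your $\Gamma_C(\wh F)$ is precisely the subsheaf $(\wh{E''})_0$ that the paper's splitting exhibits, only you need just its nonvanishing rather than the full two-step filtration. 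What each buys: the paper's route is constructive (it records $E''$ explicitly as an extension, information reused elsewhere in the article) and avoids any discussion of non-closed fibres; yours is marginally shorter and localises all base-change subtleties at the single flat point $\eta$.
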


\begin{proof}
From our definition \eqref{eq23} of $\mathfrak T_X$, it is clear that $\mu_f (F) \geq 0$ for any $F \in \mathfrak T_X$.  Since $\mathfrak T_X$ is closed under quotients, the right-most subfactor in the relative HN filtration of $F$ must have $\mu_f \geq 0$.  Hence $\mu_f (F') \geq 0$ for all the subfactors $F'$ in the relative HN filtration of $F$, and so $\mathfrak T_X \subseteq \overline{\mathfrak{T}}$.

For the other inclusion, take any $E \in \overline{\mathfrak{T}}$.  Consider the short exact sequence in $\Coh (X)$
\[
  0 \to E' \to E \to E'' \to 0
\]
where $E' \in W_{0,X}'$ and $E'' \in (W_{0,X}')^\circ$.  Then $E''$ is torsion-free and $\Phi$-WIT$_1$.  Note that the left-most subfactor in the relative HN filtration of $E''$ must have $\mu_f  \leq 0$, for otherwise that subfactor would lie in $W_{0,X}'$ by \cite[Corollary 3.29]{FMNT}, a contradiction.  Since $\overline{\mathfrak{T}}$ is a torsion class in $\Coh (X)$, we have $E'' \in \overline{\mathfrak{T}}$.  Therefore, all the subfactors in the relative HN filtration of $E''$ have $\mu_f=0$.  As a result,  $\wh{E''}|_s$ is a 0-dimensional sheaf for a general closed point $s \in S$ (again by \cite[Corollary 3.29]{FMNT}).  In particular, we have $\wh{E''} \in \Coh^{\leq 1}(Y)$.  By Lemma \ref{lemma75}, we can then fit $\wh{E''}$ in a short exact sequence in $\Coh (Y)$
\[
0 \to (\wh{E''})_0 \to \wh{E''} \to (\wh{E''})_1 \to 0
\]
where $(\wh{E''})_0 \in \{\Coh^{\leq 0}(Y_s)\}^\uparrow$ and $(\wh{E''})_1 \in \mathcal B_Y$.  Note that, all the terms in the above short exact sequence are $\Phi$-WIT$_0$, and so we obtain $E'' \in \langle \mathcal B_X, \Phi (\{\Coh^{\leq 0}(Y_s)\}^\uparrow)\rangle \subseteq \mathfrak T_X$.  Thus $E \in \mathfrak T_X$.
\end{proof}

\subsection{t-structures on elliptic threefolds}\label{sec-3foldequiv}

In this section, we prove an analogue of Theorem \ref{theorem1} on dual elliptic threefolds $\pi : X \to S$ and $\wh{\pi} : Y\to S$.  The strategy is the same as that on elliptic surfaces - we analyse the images of various categories of coherent sheaves  under the functor $\Lambda (-) := (\Psi (-))^\vee$.

Note that, since we defined a fiber sheaf on $X$ to be a sheaf supported on a finite number of fibers of $\pi$, now that $\pi : X \to S$ is an elliptic threefold, the category $\mathcal B_X$ coincides with $\Coh (\pi)_{\leq 1}$, and is strictly larger than the category of fiber sheaves on $X$, which is precisely $\Coh (\pi)_{\leq 0}$.

\begin{lemma}\label{lemma33}
Suppose $X$ is an elliptic threefold.  Let $E$ be any fiber sheaf on $X$.  Then:
\begin{itemize}
\item[(i)] If $E$ is 0-dimensional, then $\Lambda (E)\cong \EExt^2 (\wh{E},\OO_Y)[-2]$.
\item[(ii)] If $E$ is 1-dimensional and $\Psi$-WIT$_0$, then $\Lambda (E) \cong (\wh{E})^\vee \cong \EExt^2 (\wh{E},\OO_Y)[-2]$.
\item[(iii)] If $E$ is 1-dimensional and $\Psi$-WIT$_1$, then $\Lambda (E) \cong (\wh{E})^\vee[1]$ lies in $D^{[1,2]}_{\Coh (Y)}$, with $\Lambda^1 E \cong \EExt^2 (\wh{E},\OO_Y)$ being a pure 1-dimensional fiber sheaf (if nonzero), and $\Lambda^2 E \cong \EExt^3 (\wh{E},\OO_Y)$ being a 0-dimensional sheaf.
\end{itemize}
Overall, if $E$ is an arbitrary fiber sheaf on $X$, then $\Lambda (E)$ only has cohomology at degrees 1 and 2, and $\Lambda^2 E \in \mathcal B_{Y,\ast}$.
\end{lemma}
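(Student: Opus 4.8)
The plan is to dispatch the three cases (i)--(iii) first, since the WIT-class of $E$ determines the degree in which $\Lambda(E)=(\Psi(-))^\vee$ is concentrated, and then to assemble the general statement from the torsion-pair decomposition of $E$. Throughout I would lean on the elementary vanishing results for $\EExt^i_Y(-,\OO_Y)$ on the smooth threefold $Y$ (see \cite{HL}): a pure $1$-dimensional sheaf has $\EExt^i(-,\OO_Y)=0$ unless $i=2$, with $\EExt^2$ again pure $1$-dimensional; a $0$-dimensional sheaf has $\EExt^i=0$ unless $i=3$; and any sheaf of dimension $\le 1$ has $\EExt^i=0$ for $i<2$, with the top dual $\EExt^3$ being $0$-dimensional.

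For (i), a $0$-dimensional $E$ is $\Psi$-WIT$_0$, so $\Lambda(E)\cong(\wh E)^\vee$; since $\wh E$ is a $\Phi$-WIT$_1$ fiber sheaf it admits no $0$-dimensional subsheaf (these would be $\Phi$-WIT$_0$, contradicting the torsion pair $(W_{0,Y},W_{1,Y})$) and is therefore pure $1$-dimensional, whence $(\wh E)^\vee\cong\EExt^2(\wh E,\OO_Y)[-2]$. For (ii) the computation is parallel: a $1$-dimensional $\Psi$-WIT$_0$ fiber sheaf $E$ again has $\Lambda(E)\cong(\wh E)^\vee$, and by Lemma \ref{lemma66} its transform $\wh E$ is a fiber sheaf on $Y$, necessarily $\Phi$-WIT$_1$ and hence pure $1$-dimensional by the same no-$0$-dimensional-subsheaf argument, giving $\Lambda(E)\cong\EExt^2(\wh E,\OO_Y)[-2]$; the only real point is the purity of $\wh E$, extracted directly from $(W_{0,Y},W_{1,Y})$ rather than from a purity hypothesis on $E$. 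For (iii), $E$ is $\Psi$-WIT$_1$, so $\Lambda(E)\cong(\wh E)^\vee[1]$; now $\wh E$ is merely a fiber sheaf of dimension $\le 1$, so $(\wh E)^\vee$ is concentrated in degrees $2,3$ and $\Lambda(E)\in D^{[1,2]}_{\Coh(Y)}$. Writing $0\to Q\to\wh E\to G\to 0$ with $Q$ the maximal $0$-dimensional subsheaf and $G$ pure $1$-dimensional, the long exact $\EExt$-sequence (using $\EExt^1(Q,\OO_Y)=\EExt^2(Q,\OO_Y)=0$) gives $\Lambda^1 E\cong\EExt^2(\wh E,\OO_Y)\cong\EExt^2(G,\OO_Y)$, a pure $1$-dimensional fiber sheaf, and $\Lambda^2 E\cong\EExt^3(\wh E,\OO_Y)$, which is $0$-dimensional.

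For the general statement, I would decompose an arbitrary fiber sheaf $E$ through the torsion pair $(W_{0,X},W_{1,X})$ as $0\to E_0\to E\to E_1\to 0$; since the fiber sheaves form a Serre subcategory, both $E_0\in W_{0,X}$ and $E_1\in W_{1,X}$ are again fiber sheaves. By (i)--(ii), $\Lambda(E_0)\cong\EExt^2(\wh{E_0},\OO_Y)[-2]$ is concentrated in degree $2$, while by (iii) $\Lambda(E_1)\in D^{[1,2]}_{\Coh(Y)}$. Applying the triangulated functor $\Psi$ and then the contravariant dual $(-)^\vee$, which reverses the triangle, produces the exact triangle $\Lambda(E_1)\to\Lambda(E)\to\Lambda(E_0)\to\Lambda(E_1)[1]$; its long exact cohomology sequence forces $\Lambda(E)\in D^{[1,2]}_{\Coh(Y)}$ and, using $H^1(\Lambda E_0)=0=H^3(\Lambda E_1)$, yields the short exact sequence $0\to\Lambda^2 E_1\to\Lambda^2 E\to\Lambda^2 E_0\to 0$. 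Here $\Lambda^2 E_1\cong\EExt^3(\wh{E_1},\OO_Y)$ is $0$-dimensional, hence lies in $\Coh^{\le 0}(Y)\subseteq\mathcal B_{Y,\ast}$, and $\Lambda^2 E_0\cong\EExt^2(\wh{E_0},\OO_Y)=(\wh{E_0})^D$ lies in $(W_{1,Y}\cap\Coh^{=1}(Y))^D\subseteq\mathcal B_{Y,\ast}$, since $\wh{E_0}$ is a pure $1$-dimensional $\Phi$-WIT$_1$ fiber sheaf of codimension $2$. As $\mathcal B_{Y,\ast}$ is extension-closed by Lemma \ref{lemma30}, I conclude $\Lambda^2 E\in\mathcal B_{Y,\ast}$.

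The main obstacle I anticipate is bookkeeping rather than any deep difficulty: one must track correctly how the contravariant dual shifts and reverses the triangle so that the cohomological degrees line up (it is easy to be off by one or to orient the triangle the wrong way), and one must verify at each stage that the relevant $\EExt$ sheaf has exactly the predicted dimension and purity. The purity of the transforms $\wh{E_0}$ and of $\EExt^2(\wh E,\OO_Y)$, together with the identification $\EExt^2(\wh{E_0},\OO_Y)=(\wh{E_0})^D$, is precisely what ties the computation back to the defining generators $(W_{1,Y}\cap\Coh^{=1}(Y))^D$ and $\Coh^{\le 0}(Y)$ of $\mathcal B_{Y,\ast}$.
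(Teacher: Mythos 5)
Your proposal is correct and follows essentially the same route as the paper: the paper likewise omits (i)--(iii) as straightforward and proves the overall claim by decomposing the fiber sheaf via the torsion pair $(W_{0,X},W_{1,X})$ and invoking the case analysis. The only difference is that the paper refines the $W_{0,X}$-part further into a $0$-dimensional piece and a pure $1$-dimensional piece (a three-step filtration), whereas you correctly observe this is unnecessary, since purity of $\wh{E_0}$ follows from its being $\Phi$-WIT$_1$, so both subcases of the WIT$_0$ part give $\Lambda(E_0)\cong \EExt^2(\wh{E_0},\OO_Y)[-2]$ concentrated in degree $2$ with value in $\mathcal B_{Y,\ast}$.
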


\begin{proof}
The proofs of statements (i), (ii) and (iii) are all straightforward, and we omit them here.  Given any fiber sheaf $E$ on $X$, i.e.\ $E \in \Coh (\pi)_0$, we can find a short exact sequence in $\Coh (X)$
\[
0 \to E^0 \to E \to E^1 \to 0
\]
where $E^0 \in \Coh (\pi)_0 \cap W_{0,X}$ and $E^1 \in \Coh (\pi)_0 \cap W_{1,X}$.  We can then find another short exact sequence in $\Coh (X)$
\[
0 \to E^{0,0} \to E^0 \to E^{0,1} \to 0
\]
where $E^{0,0} \in \Coh^{\leq 0}(X)$ and $E^{0,1} \in \Coh (\pi)_0 \cap W_{0,X} \cap \Coh^{=1}(X)$.  As a result, we obtain a filtration in $\Coh (X)$
\[
 E^{0,0} \subseteq E^0 \subseteq E
\]
where $E^{0,0} \in \Coh^{\leq 0}(X)$, $E^0/E^{0,0} \in \Coh^{=1}(X) \cap W_{0,X}$ and $E/E^0 \in \Coh^{=1}(X) \cap W_{1,X}$.  The final claim of the lemma then follows from statements (i) through (iii).
\end{proof}

\begin{lemma}\label{lemma34}
Suppose $X$ is an elliptic threefold.  Let $E \in \Coh^{\leq 1}(X)$ be such that $E$ has no fiber subsheaves.  Then:
 \begin{itemize}
 \item[(i)] $E \in \Coh(\pi)_1$.
 \item[(ii)] $E \in W_{0,X}$, and $\wh{E}$ is a 2-dimensional reflexive sheaf.
 \item[(iii)] $\Lambda (E) \cong (\wh{E})^\vee \cong   \EExt^1 (\wh{E},\OO_Y)[-1] \in \left(\Coh^{=2}(Y) \cap \Psi (\{\Coh^{\leq 0}(X_s)\}^\uparrow)\right)[-1]$.
 \end{itemize}
\end{lemma}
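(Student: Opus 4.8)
The plan is to handle the three parts in order, using at the outset the observation that $E$ is in fact \emph{pure} of dimension $1$. Indeed, a nonzero $E\in\Coh^{\leq 1}(X)$ with no fiber subsheaves can have no $0$-dimensional subsheaf (such a subsheaf would be supported on finitely many fibers, hence a fiber sheaf), so $E$ is pure $1$-dimensional; and no irreducible component $Z$ of $\supp E$ can be contained in a fiber $X_{s_0}$, for otherwise the subsheaf of $E$ consisting of sections supported on $X_{s_0}$ would be nonzero (it is nonzero at the generic point of $Z$) and would be a fiber subsheaf. For part (i), since $E$ is nonzero and not a fiber sheaf it cannot lie in $\Coh(\pi)_0$, while $\dimension\supp E\leq 1$ forces $\dimension\pi(\supp E)\leq 1$; hence $E\in\Coh(\pi)_1$. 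Moreover, by Lemma \ref{lemma75} every component of $\supp E$ is of type (ii), so $\supp E\cap X_s$ is $0$-dimensional for every closed point $s$, i.e.\ $E|_s\in\Coh^{\leq 0}(X_s)$ for all $s$, which means $E\in\{\Coh^{\leq 0}(X_s)\}^\uparrow$.

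Part (ii) is then immediate: having established $E\in\Coh^{\leq 1}(X)\cap\Coh(\pi)_1$ with no fiber subsheaves, Corollary \ref{coro3} applies verbatim and yields that $E$ is $\Psi$-WIT$_0$ with transform $\wh E$ a $2$-dimensional reflexive sheaf. (Alternatively, $\Psi$-WIT$_0$-ness follows from $E\in\{\Coh^{\leq 0}(X_s)\}^\uparrow$ by Lemma \ref{lemma20}, and $2$-dimensionality of $\wh E$ from Lemma \ref{lemma66}.)

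For part (iii), I would first use WIT$_0$-ness to write $\Psi E\cong\wh E$ at degree $0$, so that $\Lambda E=(\Psi E)^\vee=(\wh E)^\vee$. Since $\wh E$ is reflexive of codimension $1$ on the threefold $Y$, the arguments of \cite[Proposition 1.1.6, Proposition 1.1.10]{HL}, exactly as in the proof of Lemma \ref{lemma70}, give $\EExt^i(\wh E,\OO_Y)=0$ for all $i\neq 1$: the $i=0$ term vanishes because $\wh E$ is torsion, and the terms $i\geq 2$ vanish by reflexivity. Hence $(\wh E)^\vee\cong\EExt^1(\wh E,\OO_Y)[-1]$, and by \cite[Proposition 1.1.10]{HL} the sheaf $\EExt^1(\wh E,\OO_Y)$ is pure $2$-dimensional; in particular $\Lambda E\in\Coh^{=2}(Y)[-1]$.

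The remaining, and main, point is the membership $\EExt^1(\wh E,\OO_Y)\in\Psi(\{\Coh^{\leq 0}(X_s)\}^\uparrow)$. By the $Y$-analogue of Remark \ref{remark21}, an object of $\Coh(Y)$ lies in $\Psi(\{\Coh^{\leq 0}(X_s)\}^\uparrow)$ precisely when it is $\Phi$-WIT$_1$ and its transform lies in $\{\Coh^{\leq 0}(X_s)\}^\uparrow$. My approach is to compute the $\Phi$-transform of $F:=\EExt^1(\wh E,\OO_Y)$ by pushing the sheaf-duality on $Y$ through the Fourier-Mukai transform: using the compatibility of $\Psi$ (whose kernel is a sheaf flat over both factors) with relative Grothendieck-Serre duality for the relative-dimension-$1$ fibrations $\pi$ and $\wh{\pi}$, together with $\Phi\Psi\cong\mathrm{id}[-1]$, one identifies $F$ as a $\Phi$-WIT$_1$ sheaf whose transform is, up to a twist by a line bundle, the sheaf-dual $\EExt^2(E,\OO_X)$ of the pure $1$-dimensional sheaf $E$. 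Since $\supp\EExt^2(E,\OO_X)=\supp E$ is transverse to the fibration by part (i), this transform restricts to a $0$-dimensional sheaf on every fiber, hence lies in $\{\Coh^{\leq 0}(X_s)\}^\uparrow$ by Definition \ref{def1}. This places $F$ in $\Psi(\{\Coh^{\leq 0}(X_s)\}^\uparrow)$ and finishes (iii). I expect the main obstacle to be the bookkeeping in this duality--Fourier-Mukai commutation, namely tracking the precise twist by the relative dualizing sheaf and the shift coming from the relative dimension; everything else is formal.
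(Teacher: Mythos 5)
Your parts (i) and (ii), and the computation in the first half of (iii), coincide with the paper's proof, which is a chain of citations: part (i) is Lemma \ref{lemma75}, part (ii) is Corollary \ref{coro3} (whose proof is exactly what you reproduce: Lemma \ref{lemma75} gives $E \in \{\Coh^{\leq 0}(X_s)\}^\uparrow$, hence $\Psi$-WIT$_0$ by Lemma \ref{lemma20}/Remark \ref{remark21}, purity of $\wh{E}$ by Lemma \ref{lemma26}(ii), reflexivity by Lemma \ref{lemma70}), and the identification $\Lambda E \cong (\wh{E})^\vee \cong \EExt^1(\wh{E},\OO_Y)[-1]$ with $\EExt^1(\wh{E},\OO_Y)$ pure $2$-dimensional comes from \cite[Proposition 1.1.10]{HL}, just as you argue. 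One small inaccuracy in your parenthetical alternative for (ii): Lemma \ref{lemma66} only gives $\dimension \wh{E} \leq 2$; that the dimension is exactly $2$, and the purity, require Lemma \ref{lemma26}(ii), so the route through Corollary \ref{coro3} is the one to keep.

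The genuine gap is your treatment of the final membership $\EExt^1(\wh{E},\OO_Y) \in \Psi(\{\Coh^{\leq 0}(X_s)\}^\uparrow)$, which you reduce to an unproven commutation of $\Psi$ with relative Grothendieck--Serre duality and then defer as ``bookkeeping.'' Two concrete problems. First, the paper's standing hypotheses only require $\pi,\wh{\pi}$ to be dual elliptic fibrations whose kernels are sheaves flat over both factors with $\Phi\Psi = \mathrm{id}[-1]$; this includes the Bridgeland--Maciocia threefolds with reducible fibers, where the kernel is a universal sheaf of a relative moduli problem rather than a Poincar\'{e} sheaf with a known self-duality, so the formula of the shape $(\Psi(-))^\vee \cong \Psi\bigl((-)^\vee \otimes L\bigr)[k] \otimes M$ that you implicitly invoke is not available off the shelf in this generality --- establishing it \emph{is} the missing step, not a routine twist-and-shift check. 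Second, even granting such a formula, you only address a twist on the $X$-side (which is indeed harmless, since twisting $\EExt^2(E,\OO_X)$ by a line bundle preserves its support and hence membership in $\{\Coh^{\leq 0}(X_s)\}^\uparrow$); a twist $M$ on the $Y$-side is a real issue, because $\Psi(\{\Coh^{\leq 0}(X_s)\}^\uparrow)$ is not obviously closed under $- \otimes M$, and you would need a separate argument (e.g.\ that $M$ restricts to a degree-$0$ line bundle on every fiber of $\wh{\pi}$). In fairness, the paper is itself terse here --- it asserts that (iii) ``follows easily from part (ii) and \cite[Proposition 1.1.10]{HL}'' without detail --- and note that for the later applications (Theorem \ref{theorem3} via Remark \ref{remark19}) only $\Lambda E \in \Coh^{=2}(Y)[-1]$ is used, which you do prove; but as written, your proof of the full inclusion stated in (iii) is incomplete.
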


\begin{proof}
Part (i) follows from Lemma \ref{lemma75}.  Part (ii) is just Corollary \ref{coro3}, and part (iii) follows easily from part (ii) and \cite[Proposition 1.1.10]{HL}.
\end{proof}

\begin{lemma}\label{lemma35}
Suppose $X$ is an elliptic threefold.  Let $E \in \Coh^{=2}(X) \cap \Coh (\pi)_1$.  Then, with $E_0, E_1$ as in \eqref{eq9}, we have:
\begin{itemize}
\item[(i)] If $E_0 \neq 0$, then $\wh{E_0}$ is pure 2-dimensional and reflexive, and $\Lambda (E_0) \cong \EExt^1 (\wh{E_0},\OO_Y)[-1]$;
\item[(ii)] $\wh{E_1} \in \Coh^{\leq 2}(Y)$, and $\Lambda (E_1) \in D^{[0,2]}_{\Coh (Y)}$  with $H^0(\Lambda (E_1)) \cong \EExt^1 (\wh{E_1},\OO_Y)$ (which is pure of dimension 2 if nonzero) and $H^2 (\Lambda (E_1)) \cong \EExt^3 (\wh{E_1},\OO_Y)$.
\end{itemize}
Overall, $\Lambda (E) \in D^{[0,2]}_{\Coh (Y)}$ with $\Lambda^0 E \cong \EExt^1 (\wh{E_1},\OO_Y)$, $\Lambda^1 E \in \Coh^{\leq 2}(Y)$ and  $\Lambda^2 E \in \Coh^{\leq 0}(Y)$.
\end{lemma}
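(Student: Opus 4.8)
The plan is to handle the two pieces $E_0$ and $E_1$ of the torsion-pair decomposition \eqref{eq9} separately, compute $\Lambda(E_0)$ and $\Lambda(E_1)$ through the interplay of the Fourier--Mukai transform with the derived dual, and then reassemble $\Lambda(E)$ from the exact triangle attached to $0 \to E_0 \to E \to E_1 \to 0$. The principal inputs will be Lemma \ref{lemma26}(i), Lemma \ref{lemma70}, and the support identity $\pi(\supp(-)) = \wh{\pi}(\supp(\Psi(-)))$ of Lemma \ref{lemma66}, together with the standard vanishing and purity properties of the sheaves $\EExt^i(-,\OO_Y)$ on the smooth threefold $Y$ from \cite[Section 1.1]{HL}.

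For part (i), I would first note that, as a subsheaf of the pure $2$-dimensional $E$, the sheaf $E_0$ is pure of dimension $2$ when nonzero, and lies in $\Coh(\pi)_1$: if $\pi(\supp E_0)$ were $0$-dimensional, then $\supp E_0$ would sit in finitely many $1$-dimensional fibers, contradicting $\dimension E_0 = 2$. Lemma \ref{lemma26}(i) with $d=2$ then shows $\wh{E_0}$ is pure of dimension $2$. To apply Lemma \ref{lemma70} I must check $\Hom(\Phi(\Coh^{\leq 0}(Y)), E_0)=0$: every object of $\Phi(\Coh^{\leq 0}(Y))$ is a pure $1$-dimensional fiber sheaf, and any map from such a sheaf into the pure $2$-dimensional $E_0$ has image a subsheaf of dimension $\leq 1$, hence zero. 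Lemma \ref{lemma70} then gives $\EExt^2(\wh{E_0},\OO_Y)=0$, the reflexivity of $\wh{E_0}$, and $\Lambda(E_0) \in D^{[0,1]}_{\Coh(Y)}$. Since $E_0$ is $\Psi$-WIT$_0$ we have $\Lambda(E_0) \cong (\wh{E_0})^\vee$; as $\wh{E_0}$ is a torsion sheaf, $\EExt^0(\wh{E_0},\OO_Y)=0$, and combining this with the upper bound from $D^{[0,1]}_{\Coh(Y)}$ forces concentration in degree $1$, so $\Lambda(E_0) \cong \EExt^1(\wh{E_0},\OO_Y)[-1]$.

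For part (ii), since $E_1$ is a quotient of $E$ it lies in $\Coh(\pi)_{\leq 1}$, so Lemma \ref{lemma66} gives $\dimension \wh{\pi}(\supp \wh{E_1}) \leq 1$; because $\wh{\pi}$ has relative dimension $1$ this forces $\wh{E_1} \in \Coh^{\leq 2}(Y)$. From $E_1$ being $\Psi$-WIT$_1$ we get $\Lambda(E_1) \cong (\wh{E_1})^\vee[1]$, and the fact that $\EExt^i(\wh{E_1},\OO_Y)$ vanishes outside the range $1 \leq i \leq 3$ (it vanishes for $i\leq 0$ since $\wh{E_1}$ is torsion, and for $i \geq 4$ for dimension reasons) places $\Lambda(E_1)$ in $D^{[0,2]}_{\Coh(Y)}$ with $H^0 \cong \EExt^1(\wh{E_1},\OO_Y)$ and $H^2 \cong \EExt^3(\wh{E_1},\OO_Y)$. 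Purity of $\EExt^1(\wh{E_1},\OO_Y)$ follows by passing to the pure $2$-dimensional quotient of $\wh{E_1}$, since the $\leq 1$-dimensional subsheaf contributes nothing to $\EExt^0$ or $\EExt^1$.

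Finally, to obtain the overall statement I would apply $\Lambda$ to the triangle $E_0 \to E \to E_1 \to E_0[1]$, keeping in mind that $\Lambda$ is \emph{contravariant} (the derived dual comes last), so the resulting triangle is $\Lambda(E_1) \to \Lambda(E) \to \Lambda(E_0) \to \Lambda(E_1)[1]$. The long exact cohomology sequence, using that $\Lambda(E_0)$ is concentrated in degree $1$ with $H^1 \cong \EExt^1(\wh{E_0},\OO_Y)$, immediately yields $\Lambda(E) \in D^{[0,2]}_{\Coh(Y)}$; it identifies $\Lambda^0 E \cong H^0(\Lambda(E_1)) \cong \EExt^1(\wh{E_1},\OO_Y)$; it exhibits $\Lambda^1 E$ as an extension of a subsheaf of $\EExt^1(\wh{E_0},\OO_Y)$ by $\EExt^2(\wh{E_1},\OO_Y)$, both of dimension $\leq 2$, so $\Lambda^1 E \in \Coh^{\leq 2}(Y)$; and it realises $\Lambda^2 E$ as a quotient of the $0$-dimensional sheaf $\EExt^3(\wh{E_1},\OO_Y)$, so $\Lambda^2 E \in \Coh^{\leq 0}(Y)$. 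The steps I expect to demand the most care are the bookkeeping forced by the contravariance of $\Lambda$ in this last triangle and the verification of the $\Hom$-vanishing hypothesis of Lemma \ref{lemma70} in part (i); everything else is a routine application of the $\EExt$-sheaf formalism on $Y$.
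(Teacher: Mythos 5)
Your proposal is correct and follows essentially the same route as the paper: Lemma \ref{lemma26}(i) plus the $\Hom$-vanishing to invoke Lemma \ref{lemma70} for part (i), Lemma \ref{lemma66} and codimension-one duality for part (ii), and the dualised triangle $\Lambda E_1 \to \Lambda E \to \Lambda E_0 \to \Lambda E_1[1]$ for the overall statement. You simply make explicit the steps the paper leaves as ``clear'' --- the check that $E_0 \in \Coh(\pi)_1$, the contravariant bookkeeping, and the long exact sequence analysis --- all of which are carried out correctly.
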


\begin{proof}
If $E_0 \neq 0$, then $E_0$ is also pure of dimension 2.  Then by Lemma \ref{lemma26}(i), $\wh{E_0}$ is pure 2-dimensional.  That $E_0$ is pure 2-dimensional also implies the vanishing $\Hom (\Phi (\Coh^{\leq 0}(Y)),E_0)=0$.  Hence $\wh{E_0}$ is reflexive by  Lemma \ref{lemma70}, and part (i) holds.

For part (ii), note that Lemma \ref{lemma66} gives
\[
\dimension (\hat{\pi}(\supp( \wh{E_1}))) = \dimension (\pi (\supp ( E_1))) = 1,
\]
 and so $\wh{E_1} \in \Coh^{\leq 2}(Y)$ and $\Lambda (E_1) \in D^{[0,2]}_{\Coh (Y)}$.  If $\wh{E_1}$ is 2-dimensional, then since it is of codimension 1, the sheaf $\EExt^1 (\wh{E_1},\OO_Y)$ is nonzero and is also pure 2-dimensional.  The rest of the lemma is clear.
\end{proof}

\begin{lemma}\label{lemma36}
Suppose $X$ is an elliptic threefold.  Let $E \in \Coh^{=2}(X) \cap \Coh (\pi)_2$, and suppose $E$ has no subsheaves lying in $\Coh (\pi)_1$.  Let $E_0, E_1$ be as in \eqref{eq9}.  Then $\wh{E_0} \in  \Coh^{=3}(Y)$, and
\[
\Lambda E \in \langle \Coh^{\geq 2}(Y), \Coh^{\leq 2}(Y)[-1], \mathcal B_{Y,\ast}[-2] \rangle.
\]
\end{lemma}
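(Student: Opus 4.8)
The plan is to apply the contravariant functor $\Lambda$ to the defining sequence \eqref{eq9}, namely $0 \to E_0 \to E \to E_1 \to 0$, and to treat the two transforms separately. Since $E_0$ is $\Psi$-WIT$_0$ and $E_1$ is $\Psi$-WIT$_1$, dualising the induced triangle produces an exact triangle $\Lambda E_1 \to \Lambda E \to \Lambda E_0 \to \Lambda E_1[1]$ in $D(Y)$, where $\Lambda E_0 \cong (\wh{E_0})^\vee$ and $\Lambda E_1 \cong (\wh{E_1})^\vee[1]$. Because the extension closure $\langle \Coh^{\geq 2}(Y), \Coh^{\leq 2}(Y)[-1], \mathcal B_{Y,\ast}[-2]\rangle$ is closed under extensions, it suffices to show that both $\Lambda E_0$ and $\Lambda E_1$ lie in it; for each I would check that its cohomology sheaves in degrees $0,1,2$ land in $\Coh^{\geq 2}(Y)$, $\Coh^{\leq 2}(Y)$ and $\mathcal B_{Y,\ast}$ respectively, so that its canonical truncation filtration realises it as an object of the closure. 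I stress that one should not try to read this off from $H^\bullet(\Lambda E)$ directly, since $H^0(\Lambda E)$ genuinely mixes a torsion-free sheaf with a $2$-dimensional one; splitting along \eqref{eq9} is exactly what keeps the contributions in the right degrees.

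First I would dispose of $E_0$. As a subsheaf of the pure $2$-dimensional sheaf $E$, the sheaf $E_0$ is pure $2$-dimensional (or zero), and it inherits from $E$ the absence of subsheaves in $\Coh(\pi)_1$; this forces $E_0 \in \Coh(\pi)_2$ whenever $E_0 \neq 0$. Thus $E_0$ meets the hypotheses of Lemma \ref{lemma26}(ii) with $d=2$, giving $\wh{E_0} \in \Coh^{=3}(Y)$, which is the first assertion of the lemma. Since $\wh{E_0}$ is then torsion-free on a smooth threefold, the reflexive-hull sequence $0 \to \wh{E_0} \to (\wh{E_0})^{DD} \to T \to 0$ with $\dim T \leq 1$ (the same computation as in the proof of Lemma \ref{lemma70}) shows that $\EExt^q(\wh{E_0},\OO_Y)$ is supported in codimension $\geq q+1$, so that $H^0((\wh{E_0})^\vee) = \Hom(\wh{E_0},\OO_Y) \in \Coh^{\geq 3}(Y)$, $\EExt^1(\wh{E_0},\OO_Y) \in \Coh^{\leq 1}(Y)$, $\EExt^2(\wh{E_0},\OO_Y) \in \Coh^{\leq 0}(Y)$, and $\EExt^{\geq 3}$ vanishes. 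Hence $\Lambda E_0$ lies in the required extension closure.

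The heart of the argument is $E_1$. The key preliminary step is to show $E_1 \in \mathcal B_X$: as a quotient of $E \in \Coh(\pi)_2$ we have $\dim E_1 \leq 2$, and if $E_1$ were in $\Coh(\pi)_2$ then its restriction to a general fibre would be nonzero and $0$-dimensional, hence $\Psi_s$-WIT$_0$; but $E_1 \in W_{1,X} \subseteq W_{1,X}'$ by Lemma \ref{lemma47}(ii), so $E_1|_s$ is $\Psi_s$-WIT$_1$ for general $s$, a contradiction. Therefore $E_1 \in \Coh(\pi)_{\leq 1} = \mathcal B_X$, and Lemma \ref{lemma66} gives $\wh{E_1} \in \Coh^{\leq 2}(Y)$; in particular $\Hom(\wh{E_1},\OO_Y)=0$, so $\Lambda E_1 = (\wh{E_1})^\vee[1]$ is concentrated in degrees $0,1,2$ with $H^i = \EExt^{i+1}(\wh{E_1},\OO_Y)$. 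The terms $\EExt^2$ and $\EExt^3$ are supported in dimensions $\leq 1$ and $\leq 0$, hence lie in $\Coh^{\leq 2}(Y)$ and $\mathcal B_{Y,\ast}$. The one delicate point is the degree-$0$ term $\EExt^1(\wh{E_1},\OO_Y)$: quotienting $\wh{E_1}$ by its maximal subsheaf of dimension $\leq 1$ leaves a pure $2$-dimensional sheaf $\overline{F}$, and since that subsheaf has codimension $\geq 2$ it contributes nothing to $\EExt^0$ or $\EExt^1$; thus $\EExt^1(\wh{E_1},\OO_Y) \cong \EExt^1(\overline{F},\OO_Y) = \overline{F}^{D}$ is pure $2$-dimensional, hence in $\Coh^{\geq 2}(Y)$. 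This places $\Lambda E_1$ in the extension closure as well.

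Combining the two, $\Lambda E$ is an extension of $\Lambda E_0$ by $\Lambda E_1$, both of which lie in $\langle \Coh^{\geq 2}(Y), \Coh^{\leq 2}(Y)[-1], \mathcal B_{Y,\ast}[-2]\rangle$, and therefore so does $\Lambda E$. I expect the main obstacle to be the last purity computation — verifying that $\EExt^1(\wh{E_1},\OO_Y)$ is pure $2$-dimensional, i.e.\ lands in $\Coh^{\geq 2}(Y)$ rather than merely in $\Coh^{\leq 2}(Y)$ — together with the realisation that one must decompose along \eqref{eq9} (and first establish $E_1 \in \mathcal B_X$) rather than compute $H^\bullet(\Lambda E)$ directly, since it is precisely this that makes the three pieces fall into the correct cohomological degrees.
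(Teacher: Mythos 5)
Your proof is correct, but its second half takes a genuinely different route from the paper's. Both arguments split $E$ along \eqref{eq9}, handle $E_0$ via Lemma \ref{lemma26}(ii) exactly as the paper does, and both need $E_1 \in \mathcal B_X$ --- the paper gets this by first observing $E \in W_{0,X}'$ (its restriction to a general fibre is $0$-dimensional) and citing Lemma \ref{lemma29}(i), while your general-fibre WIT$_0$/WIT$_1$ contradiction is in effect a direct reproof of that lemma for $E_1$; the two are equivalent. The real divergence is in the treatment of $\Lambda E_1$: the paper filters $E_1$ through the nested Serre subcategories $\Coh(\pi)_0 \subset \Coh^{\leq 1}(X) \subset \Coh(\pi)_{\leq 1}$ via the sequences \eqref{eq59} and \eqref{eq60} and then quotes Lemmas \ref{lemma33}, \ref{lemma34} and \ref{lemma35} on the graded pieces, whereas you dualise $\wh{E_1}$ in one stroke, using $\wh{E_1} \in \Coh^{\leq 2}(Y)$ (from Lemma \ref{lemma66}) together with the codimension bounds of \cite[Proposition 1.1.10]{HL} and the purity of $\overline{F}^{D}$ for the pure $2$-dimensional quotient $\overline{F}$ of $\wh{E_1}$ --- and you correctly isolate the purity of $\EExt^1(\wh{E_1},\OO_Y)$ as the one nontrivial point, since the blanket bound $\codim \EExt^1 \geq 1$ would only give $\Coh^{\leq 2}(Y)$, not $\Coh^{\geq 2}(Y)$. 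Your route is shorter and in fact sharper in degree $2$: you place $H^2$ of both $\Lambda E_0$ and $\Lambda E_1$ in $\Coh^{\leq 0}(Y) \subseteq \mathcal B_{Y,\ast}$, whereas the full class $\mathcal B_{Y,\ast}$ enters the paper's bound only through the $\Psi$-WIT$_0$ fibre-sheaf case of Lemma \ref{lemma33}, which cannot actually occur inside the WIT$_1$ sheaf $E_1$ (its subsheaves are all WIT$_1$). Likewise, for $E_0$ you bypass Lemma \ref{lemma70}, which the paper invokes to kill $\EExt^2(\wh{E_0},\OO_Y)$ and force $\Lambda E_0 \in D^{[0,1]}_{\Coh(Y)}$, by simply tolerating a possibly nonzero $0$-dimensional $\EExt^2$ at degree $2$. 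What the paper's coarser filtration buys in exchange is reusability: the same decomposition and lemma chain are recycled verbatim in Lemma \ref{lemma37} and in the proof of Theorem \ref{theorem3}, while your self-contained duality computation would have to be repeated at each such point.
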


\begin{proof}
Since $\dimension E = \dimension (\pi (\supp ( E))) = \dimension S$, we know $E|_s$ is 0-dimensional for a general closed point $s \in S$.  As a result,  $E \in W_{0,X}'$ and if we let $E_0, E_1$ be as in \eqref{eq9}, then Lemma \ref{lemma29} says that $E_1 \in \mathcal B_X$.

That $E$ has no subsheaves in $\Coh (\pi)_1$ implies $E_0$ also has no subsheaves in $\Coh (\pi)_1$.  Therefore, by  Lemma \ref{lemma26}(ii), we obtain that $\wh{E_0}$ is  pure of dimension 3.  On the other hand, since $E_0$ is pure 2-dimensional,  Lemma \ref{lemma70} gives us $\EExt^2 (\wh{E_0},\OO_Y)=0$.  Thus $\Lambda E_0 \cong (\wh{E_0})^\vee \in D^{[0,1]}_{\Coh (Y)}$ with $\Lambda^0 (E_0) \in \Coh^{=3}(Y)$ and $\Lambda^1 (E_0) \in \Coh^{\leq 1}(Y)$.

Now, since $E_1 \in \mathcal B_X = \Coh (\pi)_{\leq 1}$, we can fit $E_1$ in a short exact sequence in $\Coh (X)$
\begin{equation}\label{eq59}
0 \to T_1 \to E_1 \to E_1/T_1 \to 0
\end{equation}
where $T_1 \in \Coh^{\leq 1}(X)$ and $E_1/T_1 \in \Coh^{=2}(X) \cap \Coh (\pi)_1$.  We can further fit $T_1$ in a short exact sequence in $\Coh (X)$
\begin{equation}\label{eq60}
  0 \to T_{1,f} \to T_1 \to T_1/T_{1,f} \to 0
\end{equation}
where $T_{1,f} \in \Coh (\pi)_0$, and $T_1/T_{1,f} \in \Coh^{=1}(X) \cap (\Coh (\pi)_0)^\circ$.  Now, by Lemmas \ref{lemma33}, \ref{lemma34} and \ref{lemma35}, as well as the filtrations \eqref{eq59} and \eqref{eq60}, we see that
\[
  \Lambda E_1 \in \langle \Coh^{=2}(Y), \Coh^{\leq 2}(Y)[-1], \mathcal B_{Y,\ast}[-2] \rangle.
\]
This, together with the previous paragraph, gives us
\[
  \Lambda E \in \langle \Coh^{\geq 2}(Y), \Coh^{\leq 2}(Y)[-1], \mathcal B_{Y,\ast}[-2] \rangle
\]
as claimed.
\end{proof}

\begin{lemma}\label{lemma37}
Suppose $X$ is an elliptic threefold.  Let $E \in W_{0,X}' \cap \Coh^{=3}(X)$, and let $E_0, E_1$ be as in \eqref{eq9}.  Then:
 \begin{itemize}
 \item[(i)] $E_1 \in \mathcal B_X$;
 \item[(ii)] $E_0\neq 0$  and $\wh{E_0} \in \Coh^{=3}(Y)$;
  \item[(iii)] $\Lambda (E_0) \in D^{[0,2]}_{\Coh (Y)}$ with $\Lambda^0 (E_0) \cong \EExt^0 (\wh{E_0},\OO_Y)$, $\Lambda^1 (E_0) \in \Coh^{\leq 1}(Y)$ and $\Lambda^2 (E_0) \in \Coh^{\leq 0}(Y)$.
  \item[(iv)] $\Lambda^0 (E)$ is nonzero, supported in dimension 3 and lies in $\Coh^{\geq 2}(Y)$.
  \end{itemize}
  Overall, we have $\Lambda E \in D^{[0,2]}_{\Coh (Y)}$ with $\Lambda^1 \in \Coh^{\leq 2}(Y)$ and $\Lambda^2 E \in \mathcal B_{Y,\ast}$.
\end{lemma}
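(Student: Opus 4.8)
The plan is to establish the four numbered assertions roughly in order, and then assemble the final two claims from the short exact sequence \eqref{eq9} fed through $\Lambda$. Part (i) is immediate: since $E \in W_{0,X}'$, Lemma \ref{lemma29}(i) (with $i=0$) gives $E_1 \in \mathcal B_X$. For part (ii), I first note that $E_1 \in \mathcal B_X = \Coh (\pi)_{\leq 1}$ is supported in dimension at most $2$, so the purity of $E$ forces $E_0 \neq 0$; moreover, being a nonzero subsheaf of the pure $3$-dimensional sheaf $E$, the sheaf $E_0$ is itself pure of dimension $3$, and $\pi (\supp E_0) = S$ has dimension $2$. Thus $E_0$ is a pure $3$-dimensional, $\Psi$-WIT$_0$ sheaf lying in $\Coh (\pi)_2$, and I would apply Lemma \ref{lemma26}(i) in the case $d=3$ (which is exactly \cite[Lemma 9.4]{BMef}) to conclude $\wh{E_0} \in \Coh^{=3}(Y)$.

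For part (iii), since $E_0$ is $\Psi$-WIT$_0$ we have $\Lambda (E_0) \cong (\wh{E_0})^\vee$, whose degree-$q$ cohomology is $\EExt^q (\wh{E_0},\OO_Y)$. By part (ii), $\wh{E_0}$ is pure of codimension $0$ on the smooth threefold $Y$, so the purity bounds of \cite[Proposition 1.1.6, Proposition 1.1.10]{HL} give $\dim \EExt^q (\wh{E_0},\OO_Y) \leq 2 - q$ for every $q \geq 1$, together with $\EExt^q (\wh{E_0},\OO_Y) = 0$ for $q<0$ or $q>3$. Hence $\Lambda^0 (E_0) \cong \EExt^0 (\wh{E_0},\OO_Y)$, $\Lambda^1 (E_0) \in \Coh^{\leq 1}(Y)$, $\Lambda^2 (E_0) \in \Coh^{\leq 0}(Y)$ and $\EExt^3 (\wh{E_0},\OO_Y)=0$, giving $\Lambda (E_0) \in D^{[0,2]}_{\Coh (Y)}$ as claimed. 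I also record that $\Lambda^0 (E_0) \cong \HHom (\wh{E_0},\OO_Y)$ is torsion-free, hence pure of dimension $3$.

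To obtain part (iv) and the final assertions I would apply $\Psi$ and then the contravariant derived dual to \eqref{eq9}, producing the exact triangle $\Lambda E_1 \to \Lambda E \to \Lambda E_0 \to \Lambda E_1 [1]$. Since $E_1 \in \mathcal B_X$, I would analyse $\Lambda E_1$ exactly as in the proof of Lemma \ref{lemma36}: filtering $E_1$ by subsheaves with subquotients in $\Coh^{\leq 0}(X)$, $\Coh^{=1}(X) \cap (\Coh (\pi)_0)^\circ$ and $\Coh^{=2}(X) \cap \Coh (\pi)_1$, and invoking Lemmas \ref{lemma33}, \ref{lemma34} and \ref{lemma35}, one finds $\Lambda E_1 \in D^{[0,2]}_{\Coh (Y)}$ with $\Lambda^2 E_1 \in \mathcal B_{Y,\ast}$, $\Lambda^1 E_1 \in \Coh^{\leq 2}(Y)$, and---crucially---$\Lambda^0 E_1$ a subsheaf of a pure $2$-dimensional sheaf, since the only degree-$0$ contribution comes from the $\Coh^{=2}(X)\cap\Coh(\pi)_1$ subquotient via the term $\EExt^1(\wh{\cdot},\OO_Y)$ of Lemma \ref{lemma35} (the other two subquotients have $\Lambda$ concentrated in degrees $\geq 1$). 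In particular $\Lambda^0 E_1 \in \Coh^{\geq 2}(Y)$. Combining this with part (iii), both outer terms of the triangle lie in $D^{[0,2]}_{\Coh (Y)}$, so $\Lambda E \in D^{[0,2]}_{\Coh (Y)}$, and the long exact cohomology sequence reads, in the relevant range,
\[
0 \to \Lambda^0 E_1 \to \Lambda^0 E \to \Lambda^0 E_0 \xrightarrow{\ \delta\ } \Lambda^1 E_1 \to \Lambda^1 E \to \Lambda^1 E_0 \to \cdots .
\]
Reading off $\Lambda^2 E$ as an extension of $\Lambda^2 E_0 \in \Coh^{\leq 0}(Y)$ by a quotient of $\Lambda^2 E_1 \in \mathcal B_{Y,\ast}$, and using that $\mathcal B_{Y,\ast}$ is closed under quotients and extensions (Lemma \ref{lemma30}), gives $\Lambda^2 E \in \mathcal B_{Y,\ast}$; likewise $\Lambda^1 E$ is built from a subsheaf of $\Lambda^1 E_0 \in \Coh^{\leq 1}(Y)$ and a quotient of $\Lambda^1 E_1 \in \Coh^{\leq 2}(Y)$, hence lies in $\Coh^{\leq 2}(Y)$.

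Finally, for part (iv) itself: the image of $\delta$ is supported in dimension $\leq 2$ while $\Lambda^0 E_0$ is pure of dimension $3$, so $K := \ker \delta$ is a nonzero dimension-$3$ subsheaf of $\Lambda^0 E_0$, and the displayed sequence identifies $\Lambda^0 E$ as an extension $0 \to \Lambda^0 E_1 \to \Lambda^0 E \to K \to 0$. As $K$ is pure of dimension $3$ and $\Lambda^0 E_1 \in \Coh^{\geq 2}(Y)$, and $\Coh^{\geq 2}(Y)$ is closed under extensions, I conclude $\Lambda^0 E$ is nonzero, supported in dimension $3$, and lies in $\Coh^{\geq 2}(Y)$. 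The step I expect to be the main obstacle is pinning down $\Lambda^0 E_1$ precisely enough to guarantee that no lower-dimensional sheaf enters the degree-$0$ cohomology of $\Lambda E_1$, which is exactly what keeps the extension defining $\Lambda^0 E$ inside $\Coh^{\geq 2}(Y)$. As an alternative, once $\Lambda E \in D^{\geq 0}$ is known one can bypass the fine structure of $\Lambda^0 E_1$: for any $T \in \Coh^{\leq 1}(Y)$ one has $\Hom (T,\Lambda^0 E) \cong \Hom_{D(X)}(E, \Phi (T^\vee)[1])$, which vanishes because $T^\vee \in D^{\geq 2}$ forces $\Phi (T^\vee)[1] \in D^{\geq 1}$ while $E$ is a sheaf, so $\Lambda^0 E$ has no nonzero subsheaf of dimension $\leq 1$.
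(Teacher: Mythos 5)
Your proposal is correct and takes essentially the same route as the paper's proof: part (i) via Lemma \ref{lemma29}, parts (ii)--(iii) via Lemma \ref{lemma26}(i) together with the purity bounds on $\EExt^q(\wh{E_0},\OO_Y)$, and part (iv) plus the overall claims via the long exact sequence of the dualised triangle $\Lambda E_1 \to \Lambda E \to \Lambda E_0$, where your $K = \kernel \delta$ is precisely the paper's $\image(\al)$ and your explicit re-derivation of the structure of $\Lambda E_1$ matches the paper's citation of the second half of the proof of Lemma \ref{lemma36} (in particular that $\Lambda^0 E_1$ is pure $2$-dimensional if nonzero). Your closing adjunction argument showing $\Hom(\Coh^{\leq 1}(Y), \Lambda^0 E)=0$ is a valid alternative shortcut for the $\Coh^{\geq 2}(Y)$ membership not present in the paper, though as you note it does not by itself give the nonvanishing and dimension-$3$ support, which still require the main-line argument.
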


\begin{proof}
Part (i) follows from Lemma \ref{lemma29}.  If $E_0 =0$, then $E = E_1$ would be torsion by part (i), a contradiction; thus $E_0 \neq 0$.  That $\wh{E_0}$ is pure 3-dimensional follows from Lemma \ref{lemma26}(i).  Thus part (ii) holds, and part (iii) follows easily.

Now, by part (i), we know $E_1 \in \mathcal B_X$.  From the second half of the proof of Lemma \ref{lemma36}, we also know that
\[
  \Lambda E_1 \in \langle \Coh^{=2}(Y), \Coh^{\leq 2}(Y)[-1], \mathcal B_{Y,\ast}[-2] \rangle,
\]
and so  $\Lambda^0 (E_1)$ is a pure 2-dimensional sheaf if nonzero.  From \eqref{eq9}, we have
the exact triangle in $D^b(Y)$
\[
 \Lambda (E_1) \to \Lambda (E) \to \Lambda (E_0) \to \Lambda (E_1)[1]
\]
and the associated long exact sequence of cohomology:
\[
 0 \to \Lambda^0 (E_1) \to \Lambda^0(E) \overset{\al}{\to} \Lambda^0 (E_0) \to \Lambda^1 (E_1) \to \cdots.
\]

From parts (ii) and (iii), we know that $\Lambda^0 (E_0)\neq 0$ is nonzero and pure 3-dimensional.  Since $\Lambda^1 (E_1) \in \Coh^{\leq 2}(Y)$ from above, we see that $\al$ is nonzero.  Thus $\image (\al)$ is nonzero and is pure 3-dimensional.  Now, we also know  $\Lambda^0 (E_1) \in \Coh^{=2}(Y)$ from above, and so   $\Lambda^0(E)$ must be nonzero, supported in dimension 3, and lies in $\Coh^{\geq 2}(Y)$.
\end{proof}

\begin{lemma}\label{lemma42}
Suppose $X$ is an elliptic threefold, and $E \in \Phi ( \{\Coh^{\leq 0}(Y_s)\}^\uparrow)$.  Then $\Lambda (E) \in D^{[0,2]}_{\Coh (Y)}$ where $\Lambda^0 (E) \cong \EExt^1 (T_2,\OO_Y)$ is a pure 2-dimensional sheaf if nonzero (here, $T_2$ denotes the pure 2-dimensional component of $\wh{E}$), $\Lambda^1 (E) \in \Coh^{\leq 1}(Y)$ and $\Lambda^2 (E) \in \Coh^{\leq 0}(Y)$.
\end{lemma}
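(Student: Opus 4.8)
The plan is to reduce the entire statement to a computation of the derived dual of the transform $\wh{E}$, which will turn out to be supported in dimension at most $2$. First I would invoke Remark~\ref{remark21}: since $E \in \Phi(\{\Coh^{\leq 0}(Y_s)\}^\uparrow) \subseteq W_{1,X}$, the sheaf $E$ is $\Psi$-WIT$_1$, so $\Psi(E) \cong \wh{E}[-1]$ for some $\wh{E} \in \Coh(Y)$. Writing $E = \Phi(G)$ with $G \in \{\Coh^{\leq 0}(Y_s)\}^\uparrow$ and using $\Psi\Phi = \mathrm{id}_{D(Y)}[-1]$, one sees $\wh{E} \cong G$, so that $\wh{E}$ itself lies in $\{\Coh^{\leq 0}(Y_s)\}^\uparrow$; in particular $\wh{E}|_s$ is $0$-dimensional for every closed point $s \in S$. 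Consequently $\supp(\wh{E})$ meets each fiber of $\hat{\pi}$ in dimension $0$, and since $\dim S = 2$ this forces $\wh{E} \in \Coh^{\leq 2}(Y)$, i.e.\ $\wh{E}$ is a torsion sheaf of codimension at least $1$ on the smooth threefold $Y$.

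From $\Psi(E) \cong \wh{E}[-1]$ I obtain $\Lambda(E) = (\Psi(E))^\vee \cong (\wh{E})^\vee[1]$, so the whole statement becomes a statement about the local $\EExt$-sheaves of $\wh{E}$. Using the standard support bounds for a smooth $n$-fold, namely that $\EExt^i(\wh{E},\OO_Y)$ is supported in dimension at most $n-i = 3-i$, together with the vanishing $\EExt^0(\wh{E},\OO_Y) = \HHom(\wh{E},\OO_Y) = 0$ coming from the torsion-ness of $\wh{E}$, I conclude that $(\wh{E})^\vee$ has cohomology only in degrees $1,2,3$. Hence $(\wh{E})^\vee[1] \in D^{[0,2]}_{\Coh(Y)}$, and
\[
\Lambda^0(E) \cong \EExt^1(\wh{E},\OO_Y), \quad \Lambda^1(E) \cong \EExt^2(\wh{E},\OO_Y), \quad \Lambda^2(E) \cong \EExt^3(\wh{E},\OO_Y).
\]
The support bounds then give $\Lambda^1(E) \in \Coh^{\leq 1}(Y)$ and $\Lambda^2(E) \in \Coh^{\leq 0}(Y)$ immediately.

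It remains to identify $\Lambda^0(E) \cong \EExt^1(\wh{E},\OO_Y)$ with $\EExt^1(T_2,\OO_Y)$ and to verify purity. For this I would take the canonical torsion filtration of $\wh{E}$ by dimension of support: letting $T_1 \subseteq \wh{E}$ be the maximal subsheaf of dimension at most $1$, I get a short exact sequence $0 \to T_1 \to \wh{E} \to T_2 \to 0$ with $T_1 \in \Coh^{\leq 1}(Y)$ and $T_2$ pure of dimension $2$ (or zero), where $T_2$ is exactly the pure $2$-dimensional component named in the statement. Applying $R\HHom(-,\OO_Y)$ and taking the long exact sequence of cohomology sheaves, the terms $\EExt^0(T_1,\OO_Y)$ and $\EExt^1(T_1,\OO_Y)$ both vanish because $T_1$ has codimension at least $2$; this yields $\EExt^1(T_2,\OO_Y) \cong \EExt^1(\wh{E},\OO_Y)$. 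Finally, since $T_2$ is pure $2$-dimensional on the smooth threefold $Y$, its codimension-$1$ dual $\EExt^1(T_2,\OO_Y)$ is again pure of dimension $2$ (if nonzero) by \cite[Proposition 1.1.10]{HL}, giving the assertion on $\Lambda^0(E)$.

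I do not expect any genuine obstacle here: the argument is essentially bookkeeping with local $\EExt$-sheaves. The only points requiring slight care are the passage from the WIT$_1$ datum supplied by Remark~\ref{remark21} to the dimension bound $\wh{E} \in \Coh^{\leq 2}(Y)$, and the torsion-filtration identification of $\EExt^1(\wh{E},\OO_Y)$ with the $\EExt^1$-dual of its pure $2$-dimensional component; both become routine once the filtration $0 \to T_1 \to \wh{E} \to T_2 \to 0$ is in place.
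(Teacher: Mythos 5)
Your proof is correct and takes essentially the same route as the paper's: both reduce to $\Lambda(E) \cong (\wh{E})^\vee[1]$ via the $\Psi$-WIT$_1$ property and $\wh{E} \in \{\Coh^{\leq 0}(Y_s)\}^\uparrow \subseteq \Coh^{\leq 2}(Y)$, apply the standard $\EExt$ support bounds, and identify $\Lambda^0(E) \cong \EExt^1(T_2,\OO_Y)$ by dualising the torsion filtration $0 \to T_1 \to \wh{E} \to T_2 \to 0$ and taking the long exact sequence. The only differences are cosmetic: you spell out the identification $\wh{E} \cong G$ via $\Psi\Phi = \mathrm{id}_{D(Y)}[-1]$ and cite \cite[Proposition 1.1.10]{HL} for purity, details the paper leaves implicit.
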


\begin{proof}
By Lemma \ref{lemma20}, the sheaf $E$ is $\Psi$-WIT$_1$.  Thus $\Lambda (E) \cong (\wh{E}[-1])^\vee \cong (\wh{E})^\vee [1]$.  Since $\wh{E} \in \{ \Coh^{\leq 0}(Y_s)\}^\uparrow \subseteq \Coh^{\leq 2}(Y)$, we know $(\wh{E})^\vee \in D^{[1,3]}_{\Coh(Y)}$, and so $\Lambda (E) \in D^{[0,2]}_{\Coh (Y)}$.

Let $T_1$ be the maximal 1-dimensional subsheaf of $\wh{E}$, and let $T_2 := \wh{E}/T_1$.  From the short exact sequence $0 \to T_1 \to \wh{E} \to T_2 \to 0$ in $\Coh (Y)$, we obtain an exact triangle in $D(Y)$
 \[
  T_1 \to \wh{E} \to T_2 \to T_1[1].
 \]
 Dualising this exact triangle and taking the long exact sequence of cohomology, we obtain  $\Lambda^0 (E) \cong H^1 ((\wh{E})^\vee)\cong H^1 (T_2^\vee) \cong \EExt^1 (T_2,\OO_Y)$.   The rest of the lemma follows easily from the long exact sequence.
\end{proof}

\begin{lemma}\label{lemma39}
Suppose $X$ is an elliptic threefold.  Let $E \in \Coh^{=3}(X) \cap  W_{1,X}'$ and suppose $\Hom ( \Phi (\{ \Coh^{\leq 0}(Y_s)\}^\uparrow),E)=0$.  Then $E \in W_{1,X}$, and  $\Lambda (E[1]) \in D^{[0,2]}_{\Coh (Y)}$ with $\Lambda^0 (E[1]) \cong \EExt^0 (E',\OO_Y)$, where $E'$ is the torsion-free part of $\wh{E}$ and is $\Phi$-WIT$_0$, and $\Lambda^1 (E[1]) \in \Coh^{\leq 2}(Y)$ while $\Lambda^2 (E[1]) \in \mathcal B_{Y,\ast}$.
\end{lemma}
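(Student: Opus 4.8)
The plan is to reduce the statement to a computation of the derived dual $(\wh{E})^\vee$ of a single sheaf and then read off its cohomology from the torsion filtration of $\wh E$.

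First I would establish $E \in W_{1,X}$. Since $E$ is pure of dimension $3$ it is torsion-free, and any morphism from an object of $\mathcal B_X = \Coh (\pi)_{\leq 1}$ into $E$ has image that is simultaneously a quotient of a sheaf in $\Coh (\pi)_{\leq 1}$ (hence supported in dimension $\leq 2$) and a subsheaf of the pure $3$-dimensional $E$; such an image vanishes, so $E \in \mathcal B_X^\circ$. With the hypothesis $E \in W_{1,X}'$, Lemma \ref{lemma29}(ii) then gives $E \in W_{1,X}$. Hence $\Psi (E) \cong \wh E[-1]$, and applying $\Phi$ with $\Phi \Psi = \mathrm{id}[-1]$ yields $\Phi (\wh E) \cong E$; in particular $\wh E \in W_{0,Y}$. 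It follows that $\Lambda (E[1]) = (\Psi (E[1]))^\vee = (\wh E)^\vee = R\HHom (\wh E, \OO_Y)$, whose cohomology sheaves are the $\EExt^i (\wh E, \OO_Y)$.

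Next I would fix the degree range and the degree-$0$ term. The hypothesis $\Hom (\Phi (\{\Coh^{\leq 0}(Y_s)\}^\uparrow), E)=0$ is precisely what forbids $0$-dimensional subsheaves of $\wh E$: a nonzero $0$-dimensional $Q \hookrightarrow \wh E$ would be $\Phi$-WIT$_0$ and lie in $\{\Coh^{\leq 0}(Y_s)\}^\uparrow$, so by the full faithfulness of $\Phi$ the inclusion would transform into a nonzero map $\Phi (Q) \to E$ with $\Phi (Q) \in \Phi (\{\Coh^{\leq 0}(Y_s)\}^\uparrow)$, a contradiction. Thus $\wh E$ has no $0$-dimensional subsheaf, giving $\EExt^3 (\wh E, \OO_Y)=0$ and $\Lambda (E[1]) \in D^{[0,2]}_{\Coh (Y)}$. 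Writing $E'$ for the torsion-free quotient of $\wh E$ and applying $\HHom (-, \OO_Y)$ to $0 \to T \to \wh E \to E' \to 0$ (with $T$ the maximal torsion subsheaf, so $\EExt^0 (T, \OO_Y)=0$) gives $\Lambda^0 (E[1]) = \EExt^0 (\wh E, \OO_Y) \cong \EExt^0 (E', \OO_Y)$; moreover $E' \in W_{0,Y}$ since $W_{0,Y}$ is closed under quotients.

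The substantive step, and the one I expect to be the main obstacle, is $\Lambda^2 (E[1]) = \EExt^2 (\wh E, \OO_Y) \in \mathcal B_{Y,\ast}$. Here I would refine the torsion filtration: since the $0$-dimensional part of $\wh E$ vanishes, its maximal dimension-$\leq 1$ subsheaf $T_1$ is pure $1$-dimensional and $G := \wh E / T_1 \in \Coh^{\geq 2}(Y)$. The key observation is that $T_1 \in W_{1,Y}$: its $\Phi$-WIT$_0$ part is a pure $1$-dimensional subsheaf of $\wh E$ whose $\Phi$-transform lies in $\mathcal B_X$ (by Lemma \ref{lemma66}), and full faithfulness would again produce a nonzero map into the pure $3$-dimensional $E$, which is impossible. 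Hence $T_1 \in W_{1,Y} \cap \Coh^{=1}(Y)$ and $\EExt^2 (T_1, \OO_Y) = T_1^D \in (W_{1,Y}\cap \Coh^{=1}(Y))^D$. For $G$, I would filter it by its pure $2$-dimensional part $G_2$ and the same torsion-free quotient $E'$; purity and \cite[Proposition 1.1.6, Proposition 1.1.10]{HL} force $\EExt^2 (G_2, \OO_Y)$ and $\EExt^2 (E', \OO_Y)$ to be $0$-dimensional and $\EExt^3 (E', \OO_Y)=0$, so $\EExt^2 (G, \OO_Y)$ is $0$-dimensional. Finally the long exact sequence of $0 \to T_1 \to \wh E \to G \to 0$, using $\EExt^1 (T_1, \OO_Y)=0$ (codimension $2$) and $\EExt^3 (G, \OO_Y)=0$, collapses to a short exact sequence $0 \to \EExt^2 (G, \OO_Y) \to \EExt^2 (\wh E, \OO_Y) \to T_1^D \to 0$, exhibiting $\Lambda^2 (E[1])$ as an extension of $T_1^D \in (W_{1,Y}\cap \Coh^{=1}(Y))^D$ by a $0$-dimensional sheaf, hence an object of $\mathcal B_{Y,\ast}$ by closure under extensions. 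The remaining assertion $\Lambda^1 (E[1]) = \EExt^1 (\wh E, \OO_Y) \in \Coh^{\leq 2}(Y)$ is immediate from the dimension bound on $\EExt^1$.
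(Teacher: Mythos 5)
Your proof is correct and follows essentially the same route as the paper's: both reduce to computing $(\wh{E})^\vee$ along the torsion filtration of $\wh{E}$ (the pure $1$-dimensional part $T_1$, the pure $2$-dimensional part, and the torsion-free quotient $E' = \wh{E}/T$), establish $T_1 \in W_{1,Y}\cap \Coh^{=1}(Y)$ by a Hom-vanishing into $\wh{E}$ transported through the equivalence $\Phi$, and conclude with the purity estimates of \cite[Propositions 1.1.6 and 1.1.10]{HL} together with the extension-closedness of $\mathcal B_{Y,\ast}$. The only cosmetic differences are that the paper rules out $0$-dimensional and $\Phi$-WIT$_0$ subsheaves of $\wh{E}$ via $\Hom (\mathcal B_Y \cap W_{0,Y}, \wh{E})=0$, deduced from torsion-freeness of $E$, where you invoke the hypothesis $\Hom (\Phi (\{\Coh^{\leq 0}(Y_s)\}^\uparrow),E)=0$, and that the paper dualises the three subfactors separately before assembling the long exact sequences, where you first split off $T_1$ and then filter the quotient.
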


\begin{proof}
With $E$ as given, let $E_0, E_1$ be as in \eqref{eq9}.  Then $E_0 \in \mathcal B_X$ by Lemma \ref{lemma29}.  Since $E$ is pure 3-dimensional, we have $E_0=0$ and so $E \in W_{1,X}$.  Hence $\Lambda (E[1]) \cong (\wh{E})^\vee$.

That $E$ is torsion-free implies
\begin{equation}\label{eq22}
\Hom (\mathcal B_Y \cap W_{0,Y},\wh{E})=0.
\end{equation}
Now, let $T$ be the maximal torsion subsheaf of $\wh{E}$.  Note that $T$ could well be the same as $\wh{E_0}$.  Also, let $T_1$ be the maximal 1-dimensional subsheaf of $T$ and  let $T_2 := T/T_1$, which is pure 2-dimensional if nonzero.  By the vanishing \eqref{eq22}, $T_1$ is pure 1-dimensional, and must be $\Phi$-WIT$_1$ (by Remark \ref{remark21} and Lemma \ref{lemma75}).  We can consider the images of $T_1, T_2$ and $\wh{E}/T$ under derived dual  $(-)^\vee$ separately:
\begin{itemize}
\item $T_1^\vee \cong \EExt^2 (T_1,\OO_Y)[-2]\in \mathcal B_{Y,\ast}[-2]$.
\item $T_2^\vee \in D^{[1,2]}_{\Coh (Y)}$ where $H^1 (T_2^\vee) \cong \EExt^1 (T_2,\OO_Y)$ is pure 2-dimensional, and $H^2 (T_2^\vee) \cong \EExt^2 (T_2,\OO_Y)$ is 0-dimensional.
\item $(\wh{E}/T)^\vee \in D^{[0,2]}_{\Coh (Y)}$ where $H^0 ( (\wh{E}/T)^\vee) \cong \EExt^0 (\wh{E}/T,\OO_Y)$ (and $\wh{E}/T$, being the quotient of a $\Phi$-WIT$_0$ sheaf, is again $\Phi$-WIT$_0$).  Also, $H^1 ((\wh{E}/T)^\vee))\cong \EExt^1 (\wh{E}/T,\OO_Y)$ is 1-dimensional while $H^2 ((\wh{E}/T)^\vee))\cong \EExt^2 (\wh{E}/T,\OO_Y)$ is 0-dimensional.  
\end{itemize}
From above, we see that $H^0 (T^\vee)=0$.  From the short exact sequence in $\Coh (Y)$
\[
 0 \to T \to \wh{E} \to \wh{E}/T \to 0,
\]
we obtain the long exact sequence
\[
0 \to H^0 ((\wh{E}/T)^\vee) \to H^0 ((\wh{E})^\vee) \to H^0 (T^\vee) \to \cdots.
\]
Thus $H^0 ((\wh{E})^\vee) \cong H^0 ((\wh{E}/T)^\vee) \cong \EExt^0 (\wh{E}/T,\OO_Y)$ where $\wh{E}/T$ is $\Phi$-WIT$_0$ and pure 3-dimensional if nonzero.  The rest of the lemma is clear.
\end{proof}

\begin{lemma}\label{lemma48}
If $E \in \mathfrak T_X^\circ$, then $E$ satisfies the hypotheses of Lemma \ref{lemma39}.
\end{lemma}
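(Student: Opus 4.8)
The plan is to read off each of the three hypotheses of Lemma \ref{lemma39} directly from membership in $\mathfrak T_X^\circ$.  Recall from \eqref{eq23} that $\mathfrak T_X = \langle W_{0,X}', \Phi (\{\Coh^{\leq 0}(Y_s)\}^\uparrow)\rangle$, so that both $W_{0,X}'$ and $\Phi (\{\Coh^{\leq 0}(Y_s)\}^\uparrow)$ are contained in $\mathfrak T_X$.  By the definition of $\mathfrak T_X^\circ$, an object $E \in \mathfrak T_X^\circ$ satisfies $\Hom (C, E)=0$ for every $C \in \mathfrak T_X$; applying this to the two generating subcategories gives at once both $\Hom (\Phi (\{\Coh^{\leq 0}(Y_s)\}^\uparrow), E)=0$ --- the third hypothesis of Lemma \ref{lemma39} --- and $E \in (W_{0,X}')^\circ$.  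Note that no reduction of right-orthogonality to the generators is needed here, since the generators are literally subcategories of $\mathfrak T_X$.

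For the first two hypotheses I would invoke Lemma \ref{lemma31}(ii), which identifies $(W_{0,X}')^\circ$ with $W_{1,X}' \cap \mathcal F_X$.  Thus $E \in (W_{0,X}')^\circ$ yields simultaneously that $E \in W_{1,X}'$ (the second hypothesis) and that $E$ is torsion-free, i.e.\ $E \in \mathcal F_X$.  It then remains to promote torsion-freeness to purity in dimension $3$.  If $E=0$ the statement is vacuous; otherwise, since $X$ is a smooth, hence irreducible, projective threefold, a nonzero torsion-free sheaf has positive generic rank and therefore full $3$-dimensional support, while every nonzero subsheaf is again torsion-free and so also of dimension $3$.  This is exactly the assertion that $E \in \Coh^{=3}(X)$, the first hypothesis.

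The argument is essentially a matter of unwinding the definition of $\mathfrak T_X^\circ$ against the generators of the extension closure $\mathfrak T_X$ and then quoting Lemma \ref{lemma31}(ii), so I do not anticipate any real obstacle.  The only place that calls for a sentence of justification is the final step, where one must record that on an irreducible threefold the sheaves in $\mathcal F_X$ are precisely the pure $3$-dimensional sheaves, so that $W_{1,X}' \cap \mathcal F_X \subseteq \Coh^{=3}(X) \cap W_{1,X}'$.
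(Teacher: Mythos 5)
Your proposal is correct and takes essentially the same approach as the paper: both arguments unwind $E \in \mathfrak T_X^\circ$ against the two generating subcategories $W_{0,X}'$ and $\Phi (\{\Coh^{\leq 0}(Y_s)\}^\uparrow)$ of the extension closure $\mathfrak T_X$, obtaining the Hom-vanishing and $E \in (W_{0,X}')^\circ$ directly. The only cosmetic difference is that you quote Lemma \ref{lemma31}(ii) to get $E \in W_{1,X}' \cap \mathcal F_X$ and then observe that torsion-free sheaves on an irreducible threefold are exactly the pure $3$-dimensional ones, whereas the paper obtains $E \in W_{1,X}'$ via Remark \ref{remark18} and purity from the fact that $\mathfrak T_X$ contains all torsion sheaves --- the same content in a slightly different packaging.
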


\begin{proof}
Let $E \in \mathfrak T_X^\circ$.  Since $\mathfrak T_X$ contains all the torsion sheaves and contains $\Phi (\{\Coh^{\leq 0}(Y_s)\}^\uparrow)$, we see that $E$ is pure 3-dimensional and satisfies the vanishing $\Hom ( \Phi (\{\Coh^{\leq 0}(Y_s)\}^\uparrow),E)=0$.  Also, since $W_{0,X}' \subset \mathfrak T_X$ by definition, we have $E \in (W_{0,X}')^\circ$.  From Remark \ref{remark18}, we get $E \in W_{1,X}'$.  Thus $E$ satisfies all the hypotheses of Lemma \ref{lemma39}.
\end{proof}

\begin{remark}\label{remark19}
For ease of reference, let us list here the conclusions of some of the lemmas above.  Suppose that $\pi : X \to S$ and $\wh{\pi}: Y \to S$ are a pair of dual  elliptic threefolds.  Then
\begin{itemize}
\item Lemma \ref{lemma33}: for fiber sheaves $E$, we have $\Lambda E \in \langle \Coh (\wh{\pi})_0[-1], \mathcal B_{Y,\ast} [-2]\rangle$.
\item Lemma \ref{lemma34}: for $E \in \Coh^{\leq 1}(X)$ without fiber subsheaves, we have $\Lambda E \in \left(\Coh^{=2}(Y) \cap \Psi (\{\Coh^{\leq 0}(X_s)\}^\uparrow)\right)[-1]$.
\item Lemma \ref{lemma35}: for $E \in \Coh^{=2}(X) \cap \Coh (\pi)_1$, we have \[
    \Lambda E \in \langle \Coh^{=2}(Y), \Coh^{\leq 2}(Y)[-1], \Coh^{\leq 0}(Y)[-2]\rangle.
    \]
\item Lemma \ref{lemma36}: for $E \in \Coh^{=2}(Y) \cap \Coh (\pi)_2 \cap (\Coh (\pi)_{\leq 1})^\circ$, we have
    \[
    \Lambda E \in \langle \Coh^{\geq 2}(Y), \Coh^{\leq 2}(Y)[-1], \mathcal B_{Y,\ast}[-2]\rangle.
    \]
\item Lemma \ref{lemma37}: for $E \in W_{0,X}' \cap \Coh^{=3}(X)$, we have
\[
  \Lambda E \in \langle \Coh^{\geq 2}(Y), \Coh^{\leq 2}(Y)[-1], \mathcal B_{Y,\ast}[-2]\rangle.
\]
\item Lemma \ref{lemma42}: for $E \in \Phi ( \{\Coh^{\leq 0}(Y_s)\}^\uparrow)$, we have
    \[
      \Lambda E \in \langle  \Coh^{=2}(Y) \cap \{\Coh^{\leq 0}(Y_s)\}^\uparrow, \Coh^{\leq 1}(Y)[-1], \Coh^{\leq 0}(Y)[-2]\rangle.
    \]
\item Lemma \ref{lemma39}: for $E \in \Coh^{=3}(X) \cap W_{1,X}'$ with $\Hom ( \Phi (\{\Coh^{\leq 0}(Y_s)\}^\uparrow), E)=0$, we have
    \[
      \Lambda (E[1]) \in \langle \Coh^{=3}(Y), \Coh^{\leq 2}(Y)[-1], \mathcal B_{Y,\ast} [-2]\rangle.
    \]
\end{itemize}
\end{remark}

\begin{lemma}\label{lemma77}
Give an exact triangle $E' \to E \to E'' \to E'[1]$ in $D(X)$, if both $\Lambda E', \Lambda E''$ lie in the extension closure
\[
\langle \Coh^{\geq 2}(Y), \Coh^{\leq 2}(Y)[-1], \mathcal B_{Y,\ast}[-2]\rangle,
\]
then so does $\Lambda E$.
\end{lemma}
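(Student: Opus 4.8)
The plan is to observe that $\Lambda$ is a contravariant exact functor and that the target category is closed under extensions by construction, so that the statement is a purely formal consequence with no geometric input.

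First I would record that $\Lambda(-) = (\Psi(-))^\vee$ is the composite of the covariant triangulated equivalence $\Psi : D(X) \to D(Y)$ with the contravariant triangulated duality $-^\vee = R\HHom(-,\OO_Y)$ on $D(Y)$, the latter being exact since $Y$ is smooth. Hence $\Lambda$ is a contravariant triangulated functor: it carries the given triangle $E' \to E \to E'' \to E'[1]$, after reversing arrows and using $\Lambda(-[1]) \cong \Lambda(-)[-1]$, to an exact triangle in $D(Y)$
\[
\Lambda E'' \to \Lambda E \to \Lambda E' \to \Lambda E''[1].
\]
Concretely, applying $\Psi$ first gives the triangle $\Psi E' \to \Psi E \to \Psi E'' \to \Psi E'[1]$, and then applying the duality $-^\vee$ sends this to the displayed triangle, with $\Lambda E$ occupying the middle vertex between $\Lambda E''$ and $\Lambda E'$.

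Next I would invoke the defining property of the extension closure. Writing $\mathcal E := \langle \Coh^{\geq 2}(Y), \Coh^{\leq 2}(Y)[-1], \mathcal B_{Y,\ast}[-2]\rangle$, the subcategory $\mathcal E$ is by definition the smallest full subcategory of $D(Y)$ containing the three generating classes and closed under extensions; thus whenever $A \to B \to C \to A[1]$ is an exact triangle with $A, C \in \mathcal E$, the middle term $B$ lies in $\mathcal E$. Applying this to the triangle above with $A = \Lambda E''$ and $C = \Lambda E'$, both of which lie in $\mathcal E$ by hypothesis, yields $\Lambda E \in \mathcal E$, as desired.

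The only point requiring care, and the single possible pitfall, is the bookkeeping of the contravariance: one must verify that $\Lambda E$ genuinely sits as the middle vertex of a triangle whose two outer vertices are $\Lambda E'$ and $\Lambda E''$, rather than, say, $\Lambda E'$ appearing as a cone built from a map out of $\Lambda E$, since the closure-under-extensions property applies only in that configuration. Once the triangle is written in the correct direction there is no further content; in particular, none of the geometry of the fibration and none of the explicit computations collected in Remark \ref{remark19} are needed to establish this lemma.
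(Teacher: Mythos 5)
Your proposal is correct, and it takes a genuinely different, more formal route than the paper. You use only two facts: that $\Lambda(-)=(\Psi(-))^\vee$ is a contravariant exact functor, so it turns the triangle $E' \to E \to E'' \to E'[1]$ into an exact triangle $\Lambda E'' \to \Lambda E \to \Lambda E' \to \Lambda E''[1]$ with $\Lambda E$ as the middle vertex, and that the extension closure $\mathcal E := \langle \Coh^{\geq 2}(Y), \Coh^{\leq 2}(Y)[-1], \mathcal B_{Y,\ast}[-2]\rangle$ is, by its very definition, closed under extensions in $D(Y)$; your flagged pitfall about the direction of the dualized triangle is exactly the right thing to check, and you resolved it correctly. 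The paper instead argues cohomologically: it writes down the long exact sequence relating $\Lambda^i E''$, $\Lambda^i E$, $\Lambda^i E'$ for $i=0,1,2$ and verifies membership degree by degree, using that $\Coh^{\geq 2}(Y)$ is closed under subobjects and extensions, that $\Coh^{\leq 2}(Y)$ is a Serre subcategory, and that $\mathcal B_{Y,\ast}$ is closed under quotients and extensions (Lemma \ref{lemma30}). The two approaches buy different things. Because each generating class of $\mathcal E$ sits in a single cohomological degree, the paper's argument in effect establishes the stronger, implicit characterization that $\mathcal E$ coincides with the category of objects $F \in D^{[0,2]}_{\Coh (Y)}$ satisfying $H^0(F) \in \Coh^{\geq 2}(Y)$, $H^1(F) \in \Coh^{\leq 2}(Y)$ and $H^2(F) \in \mathcal B_{Y,\ast}$, a degree-by-degree membership test matching the computations collected in Remark \ref{remark19}; your argument does not yield this. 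Conversely, your proof is shorter and strictly more general: it uses none of the geometry and none of the closure properties of the three classes, and would prove the same statement verbatim for any extension-closed subcategory of $D(Y)$ and any exact (co- or contravariant) functor in place of $\Lambda$.
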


\begin{proof}
From the long exact sequence of cohomology for the exact triangle $E' \to E \to E'' \to E'[1]$, we have the following exact sequences:
\begin{align*}
0 \to \Lambda^0 E'' \to &\Lambda^0 E \to \Lambda^0 E', \\
 \Lambda^1 E'' \to &\Lambda^1 E \to \Lambda^1 E', \text{ and} \\
 \Lambda^2 E'' \to &\Lambda^2 E \to \Lambda^2 E' \to 0.
\end{align*}
Since $\Lambda^0 E'', \Lambda^0 E' \in \Coh^{\geq 2}(Y)$, we have $\Lambda^0 E$.  Also, that $\Lambda^1 E'', \Lambda^1 E' \in \Coh^{\leq 2}(Y)$ implies $\Lambda^1 E$ since $\Coh^{\leq 2}(Y)$ is a Serre subcategory of $\Coh (Y)$.  And finally, that $\Lambda^2 E'', \Lambda^2 E' \in \mathcal B_{Y,\ast}$ implies $\Lambda^2$ since $\mathcal B_{Y,\ast}$ is closed under quotients and extensions in $\Coh (Y)$.
\end{proof}

Combining the above lemmas together, we now have:

\begin{theorem}\label{theorem3}
Suppose $X$ is an elliptic threefold.  We have
\begin{equation}\label{eq46}
   \Lambda (\mathfrak C_X) \subseteq \langle \Coh^{\geq 2}(Y), \Coh^{\leq 2}(Y)[-1], \mathcal B_{Y,\ast}[-2]\rangle.
 \end{equation}
\end{theorem}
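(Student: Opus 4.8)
The plan is to reduce the statement to the individual computations of Lemmas~\ref{lemma33}--\ref{lemma42} via the extension-stability of the target class recorded in Lemma~\ref{lemma77}. Recall that $\mathfrak C_X = \langle \mathfrak T_X^\circ[1], \mathfrak T_X\rangle$, so every $E \in \mathfrak C_X$ sits in a canonical exact triangle
\[
  F[1] \to E \to T \to F[2]
\]
in $D(X)$ with $F := H^{-1}(E) \in \mathfrak T_X^\circ$ and $T := H^0(E) \in \mathfrak T_X$. Applying $\Lambda$ and invoking Lemma~\ref{lemma77} with $E' = F[1]$ and $E'' = T$, it suffices to show separately that $\Lambda(F[1])$ and $\Lambda(T)$ lie in $\langle \Coh^{\geq 2}(Y), \Coh^{\leq 2}(Y)[-1], \mathcal B_{Y,\ast}[-2]\rangle$. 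The torsion-free summand is immediate: for $F \in \mathfrak T_X^\circ$, Lemma~\ref{lemma48} shows $F$ satisfies the hypotheses of Lemma~\ref{lemma39}, which places $\Lambda(F[1])$ in $\langle \Coh^{=3}(Y), \Coh^{\leq 2}(Y)[-1], \mathcal B_{Y,\ast}[-2]\rangle \subseteq \langle \Coh^{\geq 2}(Y), \Coh^{\leq 2}(Y)[-1], \mathcal B_{Y,\ast}[-2]\rangle$.

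For the torsion summand $T \in \mathfrak T_X = \langle W_{0,X}', \Phi(\{\Coh^{\leq 0}(Y_s)\}^\uparrow)\rangle$, I would induct on the length of a generating filtration, again via Lemma~\ref{lemma77}, so that it is enough to treat an object in each of the two generators. Objects of $\Phi(\{\Coh^{\leq 0}(Y_s)\}^\uparrow)$ are handled directly by Lemma~\ref{lemma42}, whose output embeds into the target class (using $\Coh^{=2}(Y)\subseteq\Coh^{\geq 2}(Y)$, $\Coh^{\leq 1}(Y)\subseteq \Coh^{\leq 2}(Y)$ and $\Coh^{\leq 0}(Y)\subseteq \mathcal B_{Y,\ast}$). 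For an object $E \in W_{0,X}'$, I would filter it by the dimension-of-support filtration $T_0(E)\subseteq T_1(E)\subseteq T_2(E)\subseteq E$ with pure subquotients, and refine this: $T_0(E)$ is a $0$-dimensional fiber sheaf (Lemma~\ref{lemma33}); $T_1(E)/T_0(E)\in\Coh^{=1}(X)$ splits, along the Serre subcategory $\Coh(\pi)_0$, into a fiber sheaf (Lemma~\ref{lemma33}) and a pure $1$-dimensional sheaf without fiber subsheaves (Lemma~\ref{lemma34}); $T_2(E)/T_1(E)\in\Coh^{=2}(X)$ splits, along the Serre subcategory $\Coh(\pi)_{\leq 1}$, into a piece in $\Coh^{=2}(X)\cap\Coh(\pi)_1$ (Lemma~\ref{lemma35}) and a piece in $\Coh^{=2}(X)\cap\Coh(\pi)_2\cap(\Coh(\pi)_{\leq 1})^\circ$ (Lemma~\ref{lemma36}); and the top quotient $E/T_2(E)$, being a quotient of $E$, lies in $W_{0,X}'\cap\Coh^{=3}(X)$ by Lemma~\ref{lemma31}(i) and is covered by Lemma~\ref{lemma37}. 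Each subquotient's $\Lambda$-image lies in the target class by the cited lemma, after the elementary inclusions $\Coh^{=2}(Y),\Coh^{=3}(Y)\subseteq\Coh^{\geq 2}(Y)$ and $\Coh^{\leq 0}(Y)\subseteq\mathcal B_{Y,\ast}$, so iterated application of Lemma~\ref{lemma77} gives $\Lambda(E)$, hence $\Lambda(T)$, in the target, completing the reduction.

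I expect the main work — rather than a single hard obstacle — to be the bookkeeping of the filtration step: verifying that each subquotient of the refined filtration of a general $W_{0,X}'$-object meets the precise hypotheses of the matching lemma (in particular that splitting along the $\Coh(\pi)_{\leq 1}$-torsion pair yields a $\Coh(\pi)_2$ piece with no $\Coh(\pi)_1$-subsheaves, as Lemma~\ref{lemma36} demands, and that purity is preserved when passing to subsheaves), and confirming that the outputs of Lemmas~\ref{lemma33}--\ref{lemma42} all land inside the single extension closure so that Lemma~\ref{lemma77} applies repeatedly. The latter rests on $\Coh^{\geq 2}(Y)$ and $\Coh^{\leq 2}(Y)$ being closed under extensions and $\mathcal B_{Y,\ast}$ being closed under quotients and extensions, which is precisely what makes Lemma~\ref{lemma77} available.
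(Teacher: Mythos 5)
Your proposal is correct and follows essentially the same route as the paper: reduce via Lemma~\ref{lemma77} to the generators $\mathfrak T_X^\circ[1]$ (Lemmas~\ref{lemma48} and \ref{lemma39}), $\Phi(\{\Coh^{\leq 0}(Y_s)\}^\uparrow)$ (Lemma~\ref{lemma42}), and $W_{0,X}'$, filtering the last into exactly the five subquotient types handled by Lemmas~\ref{lemma33}--\ref{lemma37}. Your only cosmetic deviation is ordering the filtration by dimension of support first and then refining along $\Coh(\pi)_0$ and $\Coh(\pi)_{\leq 1}$, whereas the paper filters in one pass along the nested Serre subcategories $\Coh(\pi)_{\leq 0}\subset\Coh^{\leq 1}(X)\subset\Coh(\pi)_{\leq 1}\subset\Coh^{\leq 2}(X)$; the resulting subquotients, and the lemmas they are fed into, are identical.
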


\begin{proof}
We begin by showing that any sheaf in $W_{0,X}'$ can be filtered by sheaves of the types from the various lemmas above.  The idea is to use the fact that $W_{0,X}'$ is a torsion class (hence closed under quotients in $\Coh (X)$), and that we have the following nested sequence of Serre subcategories in $\Coh (X)$:
\[
  \Coh (\pi)_{\leq 0} \subset \Coh^{\leq 1}(X) \subset \Coh (\pi)_{\leq 1} \subset \Coh^{\leq 2}(X).
\]
Fix any $E \in W_{0,X}'$.  Then $E$ fits in a short exact sequence in $\Coh (X)$
\[
 0 \to E^{0,0} \to E \to E^{0,1} \to 0
\]
where $E^{0,0} \in \Coh^{\leq 2}(X)$ and $E^{0,1} \in \Coh^{=3}(X) \cap W_{0,X}'$.  We can then fit $E^{0,0}$ in a short exact sequence in $\Coh (X)$
\[
0 \to E^{1,0} \to E^{0,0} \to E^{1,1} \to 0
\]
where $E^{1,0} \in \Coh^{\leq 2}(X) \cap \Coh (\pi)_{\leq 1} = \Coh(\pi)_{\leq 1}$ and $E^{1,1}$ lies in $\Coh^{\leq 2}(X) \cap (\Coh (\pi)_{\leq 1})^\circ$, which is the same as $\Coh^{=2}(X) \cap \Coh (\pi)_2 \cap (\Coh (\pi)_{\leq 1})^\circ$.  In turn, we can fit $E^{1,0}$ in a short exact sequence in $\Coh (X)$
\[
0 \to E^{2,0} \to E^{1,0} \to E^{2,1} \to 0
\]
where $E^{2,0} \in \Coh (\pi)_{\leq 1} \cap \Coh^{\leq 1}(X)=\Coh^{\leq 1}(X)$ and $E^{2,1}$ lies in $\Coh(\pi)_{\leq 1} \cap (\Coh^{\leq 1}(X))^\circ$, which is the same as $\Coh^{=2}(X) \cap \Coh (\pi)_1$.  Next, we can fit $E^{2,0}$ in a short exact sequence in $\Coh (X)$
\[
0 \to E^{3,0} \to E^{2,0} \to E^{3,1} \to 0
\]
where $E^{3,0} \in \Coh^{\leq 1}(X) \cap \Coh(\pi)_{0}$ and $E^{3,1} \in \Coh^{\leq 1}(X) \cap (\Coh (\pi)_{0})^\circ$.  Overall, we have constructed a filtration of $E$
\[
  E^{3,0} \subseteq E^{2,0} \subseteq E^{1,0} \subseteq E^{0,0} \subseteq E
\]
where the subfactors are:
\begin{itemize}
\item $E^{3,0}$, which satisfies the hypotheses of Lemma \ref{lemma33};
\item $E^{2,0}/E^{3,0} \cong E^{3,1}$, which satisfies the hypotheses of Lemma \ref{lemma34};
\item $E^{1,0}/E^{2,0} \cong E^{2,1}$, which satisfies the hypotheses of Lemma \ref{lemma35};
\item $E^{0,0}/E^{1,0} \cong E^{1,1}$, which satisfies the hypotheses of Lemma \ref{lemma36};
\item $E/E^{0,0} \cong E^{0,1}$, which satisfies the hypotheses of Lemma \ref{lemma37}.
\end{itemize}
As a result (see Remark \ref{remark19}, for instance), each of the subfactors of $E$ listed above is contained in the category
\begin{equation}\label{eq61}
\langle \Coh^{\geq 2}(Y), \Coh^{\leq 2}(Y)[-1], \mathcal B_{Y,\ast}[-2]\rangle.
\end{equation}
Therefore, $\Lambda (W_{0,X}')$ is contained in the category \eqref{eq61}.  Besides, the two categories $\Lambda (\Phi (\{ \Coh^{\leq 0}(Y_s)\}^\uparrow))$ and $\Lambda (\mathfrak T_X^\circ[1])$ are also contained in the category \eqref{eq61} by Lemmas \ref{lemma42}, \ref{lemma39} and \ref{lemma48}.  The inclusion \eqref{eq46}  thus follows from Lemma \ref{lemma77}.
\end{proof}

\begin{remark}\label{remark20}
Since $\mathcal B_{Y,\ast} \subseteq \mathfrak T_X$, Theorem \ref{theorem3} immediately gives
 \begin{equation}\label{eq52}
  \Lambda (\mathfrak C_X) \subseteq \langle \mathfrak C_Y, \mathfrak C_Y[-1],  \mathfrak C_Y [-2]\rangle.
\end{equation}
\end{remark}

Now we have the following theorem, which can be considered as the  analogue of Theorem \ref{theorem1} on elliptic threefolds:

\begin{theorem}\label{theorem2}
Suppose $X$ is a smooth elliptic threefold.  Then the heart $\Lambda (\mathfrak C_X)$ differs from  the heart $\mathfrak D_Y[-2]$ by one tilt.
\end{theorem}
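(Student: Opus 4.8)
The plan is to deduce the theorem from Theorem \ref{theorem3} by translating the standard-cohomology information it provides into information about cohomology with respect to the t-structure with heart $\mathfrak D_Y$. First I would record that $\Lambda$ is a contravariant equivalence $D(X) \to D(Y)$ (being the composite of the equivalence $\Psi$ with the derived duality $(-)^\vee$), so that it carries the heart $\mathfrak C_X$ to the heart $\Lambda(\mathfrak C_X)$ of a bounded t-structure on $D(Y)$. The task is then to exhibit $\Lambda(\mathfrak C_X)$ as a single HRS tilt of $\mathfrak D_Y[-2]$, for which it suffices to show every object of $\Lambda(\mathfrak C_X)$ has $\mathfrak D_Y[-2]$-cohomology concentrated in degrees $-1$ and $0$.

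Next I would extract from Theorem \ref{theorem3} the following facts about the standard cohomology sheaves of any $A \in \Lambda(\mathfrak C_X)$: since $A$ lies in the extension closure $\langle \Coh^{\geq 2}(Y), \Coh^{\leq 2}(Y)[-1], \mathcal B_{Y,\ast}[-2]\rangle$, its cohomology is concentrated in degrees $0,1,2$, with $H^0(A) \in \Coh^{\geq 2}(Y)$ and $H^2(A) \in \mathcal B_{Y,\ast}$. The first follows from a bottom-cohomology induction on the number of extensions, using that $\Coh^{\geq 2}(Y)$ is closed under subobjects and extensions; the second from a top-cohomology induction, using that $\mathcal B_{Y,\ast}$ is closed under quotients and extensions (Lemma \ref{lemma30}). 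I would also observe that $\mathcal B_{Y,\ast} \subseteq \Coh^{\leq 1}(Y)$, since both $\Coh^{\leq 0}(Y)$ and the duals $(W_{1,Y}\cap\Coh^{=1}(Y))^D$, which are pure $1$-dimensional, are supported in dimension at most $1$. Consequently $H^0(A) \in \Coh^{\geq 2}(Y)$ admits no nonzero subobject lying in $\mathcal B_{Y,\ast}$, so its $\mathcal B_{Y,\ast}$-torsion vanishes.

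The crux is then the cohomological translation. Using the torsion pair $(\mathcal B_{Y,\ast}, \mathcal B_{Y,\ast}^\circ)$ (Remark \ref{remark5}) and the standard short exact sequence computing the cohomology of the HRS tilt $\mathfrak D_Y$ from the standard cohomology, namely
\begin{equation*}
0 \to \mathcal B_{Y,\ast}^\circ\big(H^{n-1}(A)\big)[1] \to H^n_{\mathfrak D_Y}(A) \to \mathcal B_{Y,\ast}\big(H^n(A)\big) \to 0,
\end{equation*}
where $\mathcal B_{Y,\ast}(-)$ and $\mathcal B_{Y,\ast}^\circ(-)$ denote the torsion and torsion-free parts, I would check degree by degree that $H^n_{\mathfrak D_Y}(A)=0$ for $n \notin \{1,2\}$. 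The only nontrivial checks are at the boundary: $H^0_{\mathfrak D_Y}(A)=0$ because the $\mathcal B_{Y,\ast}$-torsion of $H^0(A)$ vanishes, and $H^3_{\mathfrak D_Y}(A)=0$ because $H^2(A) \in \mathcal B_{Y,\ast}$ has vanishing torsion-free part. This gives $\Lambda(\mathfrak C_X) \subseteq D^{[1,2]}_{\mathfrak D_Y} = \langle \mathfrak D_Y[-1], \mathfrak D_Y[-2]\rangle$. Finally, invoking the HRS correspondence \cite{HRS} — a heart contained in $\langle \mathcal A[1], \mathcal A\rangle$ for a heart $\mathcal A$ is precisely a single tilt of $\mathcal A$, with torsion pair $\big(\Lambda(\mathfrak C_X) \cap \mathfrak D_Y[-2],\ \Lambda(\mathfrak C_X)[-1] \cap \mathfrak D_Y[-2]\big)$ — I conclude that $\Lambda(\mathfrak C_X)$ differs from $\mathfrak D_Y[-2]$ by one tilt.

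The main obstacle I expect is the careful bookkeeping of the two boundary vanishings, which is where the specific geometry enters: the containment $\mathcal B_{Y,\ast} \subseteq \Coh^{\leq 1}(Y)$ together with the purity built into $\Coh^{\geq 2}(Y)$ is what forces $H^0_{\mathfrak D_Y}(A)=0$, while the closure of $\mathcal B_{Y,\ast}$ under quotients and extensions is what forces $H^2(A)\in\mathcal B_{Y,\ast}$ and hence $H^3_{\mathfrak D_Y}(A)=0$. Everything else is formal once Theorem \ref{theorem3} is in hand.
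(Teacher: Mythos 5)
Your proposal is correct and takes essentially the same route as the paper: both rest on Theorem \ref{theorem3} together with the observation $\mathcal B_{Y,\ast} \subseteq \Coh^{\leq 1}(Y)$ (equivalently $\Coh^{\geq 2}(Y) \subseteq \mathcal B_{Y,\ast}^\circ$) to place $\Lambda (\mathfrak C_X)$ inside $\langle \mathfrak D_Y[-1], \mathfrak D_Y[-2]\rangle$, and then conclude by the general fact that a heart nested in $\langle \mathcal A[1], \mathcal A\rangle$ is a single tilt of $\mathcal A$, with the torsion pair you state. The only difference is presentational — your degree-by-degree check via the tilt-cohomology exact sequence unwinds by hand the containment the paper handles by citing \cite[Proposition 2.3.2(b)]{BMTI}, which is also the correct reference for the final step (the converse direction you attribute to \cite{HRS} is proved there, not in HRS).
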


\begin{proof}
Since $\mathcal B_{Y,\ast} \subseteq \Coh^{\leq 1}(Y)$, we have $\Coh^{\geq 2}(Y) \subseteq \mathcal B_{Y,\ast}^\circ$.  The theorem then follows from Theorem \ref{theorem3} and \cite[Proposition 2.3.2(b)]{BMTI}.
\end{proof}

\end{document}